\documentclass[11pt]{amsart}

\usepackage[utf8]{inputenc}
\usepackage[T1]{fontenc}

\usepackage{hyperref}

\usepackage[all]{xy}
\usepackage{amssymb}
\usepackage{mathdots}

\newtheorem{theorem}{Theorem}[section]
\newtheorem*{theorem*}{Theorem}
\newtheorem{lemma}[theorem]{Lemma}
\newtheorem{question}[theorem]{Question}
\newtheorem{corollary}[theorem]{Corollary}
\newtheorem*{corollary*}{Corollary}
\newtheorem{proposition}[theorem]{Proposition}
\newtheorem*{proposition*}{Proposition}
\newtheorem{claim}[theorem]{Claim}
\newtheorem*{claim*}{Claim}

\newtheorem{conjecture}[theorem]{Conjecture}

\theoremstyle{definition}
\newtheorem*{definition*}{Definition}
\newtheorem{definition}[theorem]{Definition}
\newtheorem{example}[theorem]{Example}

\theoremstyle{remark}

\newtheorem*{remark*}{Remark}
\newtheorem*{notation}{Notation}

\theoremstyle{plain}

\newcommand{\Z}{{\mathbb Z}}
\newcommand{\T}{{\mathbb T}}

\newcommand{\R}{{\mathbb R}}
\newcommand{\N}{{\mathbb N}}
\newcommand{\E}{{\mathbb E}}

\newcommand{\CC}{{\mathcal C}}

\newcommand{\CF}{{\mathcal F}}

\newcommand{\CB}{{\mathcal B}}

\newcommand{\CU}{{\mathcal U}}

\newcommand{\one}{{\mathbf 1}}

\newcommand{\inv}{^{-1}}
\newcommand{\id}{{\bf Id}}
\newcommand{\norm}[1]{\Vert #1\Vert}
\newcommand{\wt}{\widetilde}

\newcommand{\ve}{\varepsilon}

\newcommand{\bohrzero}{{\rm Bohr${}_0$ }}

\newcommand{\bohrzeroo}{{\rm Bohr${}_0^*$ }}

\newcommand{\nilbohrzero}[1]{{\rm Nil${}_{{#1}}$-Bohr${}_0$ }}
\newcommand{\nilbohrzeroo}[1]{{\rm Nil${}_{{#1}}$-Bohr${}_0^*$ }}

\newcommand{\nildbohrzero}{{\rm Nil${}_s$-Bohr${}_0$ }}
\newcommand{\nildbohrzeroo}{{\rm Nil${}_s$-Bohr${}_0^*$ }}
\newcommand{\RP}{\mathrm{RP}}

\newcommand{\RPs}{{\RP^{[s]}}}
\DeclareMathOperator{\ip}{IP}

\newcommand{\wh}{\widehat}

\newcommand{\myar}{\ar@[|(3)]}

\DeclareMathOperator{\range}{Range}

\begin{document}
\title{Variations on topological recurrence}
\author{Bernard Host}
\address{Universit\'e de Paris-Est Marne la Vall\'ee, Laboratoire d'analyse et de math\'ematiques appliqu\'{e}es CNRS UMR 8050\\
5 Bd. Descartes, Champs sur Marne\\
77454 Marne la Vall\'ee Cedex 2, France}
\email{bernard.host@univ-mlv.fr}

\author{Bryna Kra}
\address{ Department of Mathematics, Northwestern University \\ 2033 Sheridan Road Evanston \\ IL 60208-2730, USA} 
\email{kra@math.northwestern.edu}

\author{Alejandro Maass}
\address{Department of Mathematical Engineering \& 
Center for Mathematical Modeling UMI 2071 UCHILE-CNRS, University of Chile \\ Blanco Encalada 2120 \\ Santiago, Chile.}
\email{amaass@dim.uchile.cl} 

\thanks{The  second author was partially supported by NSF grant $1200971$ and the third author was partially supported by the B\'ezout Chair of the Universit\'e Paris-Est Marne-la-Vall\'ee.}

\begin{abstract}
Recurrence properties of systems and associated sets of integers that suffice for recurrence are classical objects in topological dynamics.   We describe relations between recurrence in different sorts of systems, study ways to formulate finite versions of recurrence, and describe connections to combinatorial problems.  In particular, we show that sets of Bohr recurrence (meaning sets of recurrence for rotations) suffice for recurrence in nilsystems.  Additionally, we prove an extension of this property for  multiple recurrence  in affine systems.
\end{abstract}
\maketitle

\section{Topological recurrence}

Van der Waerden's classic theorem~\cite{vdw} states that any finite coloring of the integers contains arbitrarily long monochromatic progressions. This has led to numerous refinements and strengthenings, 
with some of these obtained via the deep connections to topological dynamics introduced with the proof of Furstenberg and Weiss~\cite{FW}.  A direction that has been studied extensively is what restrictions can be placed on the step in the arithmetic progression, and in dynamics this corresponds to what sets arise as sets of recurrence.   Recurrence properties of systems and the associated sets of recurrence are classical notions both in topological dynamics and in additive combinatorics, and have numerous classically equivalent characterizations.  

Part of this article is a review of these connections, many of which are scattered throughout the literature, and we point out numerous open questions. 
Part of this article is new, particularly connections to objects that have recently shown to play a role 
in topological dynamics, such as nilsystems.  For both single and multiple recurrence, the class of nilsystems (see Section~\ref{sec:lift-Bohr} for definitions) plays a natural role.  This is reflected in work in the ergodic context on multiple convergence along arithmetic progressions~\cite{HK}.  In the topological context, a higher order regionally proximal relation was introduced in~\cite{HKM}, where the connection to nilsystems was made.  Further deep connections between these notions and that of topological recurrence were made in~\cite{HSY}.  Nilsystems have also been used to construct explicit examples of sets of multiple recurrence, for example in the work of~\cite{FLW, HSY}.  Thus the relation between recurrence and its connections with nilsystems have become a natural direction for further study.

Our main focus is how to formulate finite versions of recurrence related to van der Waerden's Theorem. One way is to 
fix a length for the progressions and then characterize the sets of recurrence for this fixed length.  
We then study classifying dynamical systems by their recurrence properties along arithmetic progressions of this length, 
seeking necessary or sufficient conditions for such recurrence.  In various guises, this problem has been studied by dynamicists and we consider this point of view in Section~\ref{sec:recurrence}.

In particular, we study a question asked by Katznelson~\cite{katznelson}: if $R$ is a set of recurrence for all rotations, is it a set of recurrence for all minimal topological dynamical systems? (See Section~\ref{sec:recurrence-for-families} for the definitions.)  
We give a partial answer to this question, showing that it holds when one restricts to the class of nilsystems (Theorem~\ref{th:rec-nil}) and its almost proximal extensions. We then turn to the similar questions for multiple recurrence.  In this setting, we show (Theorem~\ref{th:lift-affine}) that if $R$ is a set of $s$-recurrence for $s$-step affine nilsystems, then it is also a set of $t$-recurrence for all $t\geq s$ for the same class of systems. A summary of these implications is given in Figure 1.  

A second way to finitize van der Waerden's Theorem is by fixing the number of colors and studying the associated sets of recurrence.  This point of view has largely been ignored by dynamicists and we take this approach 
in Section~\ref{sec:large}, where we mainly pose further directions for study.

Throughout this article, we assume that $(X,T)$ denotes  a {\em (topological dynamical) system}, meaning that  $X$ is a compact metric space 
and $T\colon X\to X$ is a homeomorphism.  
While our primary focus is on topological recurrence, there are also measure theoretic analogs, where the underlying space is a probability measure preserving system $(X, \CB, \mu)$ endowed with a measurable, measure preserving transformation $T\colon X\to X$.  
Combinatorially, this corresponds to Szemer\'edi's Theorem and the connection to ergodic theory has 
been well studied.  While the measure theoretic and topological settings give rise to similar 
results, there are some differences and we point out some of the known measure theoretic analogs and  
pose some related questions.
 
\section{Variations on recurrence}
\label{sec:recurrence}

\subsection{Single recurrence}

Throughout, we focus on {\em minimal} systems $(X,T)$, meaning that no proper closed subset of $X$ is $T$-invariant.

\begin{definition}
We say that $R\subset \N$  is a {\em set of (topological) recurrence} if for every minimal system $(X,T)$ and every nonempty open set $U\subset  X$, there exists $n\in R$ such that $U\cap T^{-n}U\neq\emptyset$. 
\end{definition}

\begin{notation}
If $x\in X$ and $U\subset  X$ is an open set, we write 
$$
N(x,U)=\{n\in\N\colon T^nx\in U\}
$$
for the return times of the point $x$ to the neighborhood $U$ and
$$
N(U)=\{n\in\N\colon U\cap T^{-n}U\neq\emptyset\}
$$
for the return times of the set $U$ to itself.  
In case of ambiguity,  we include the transformation in our notation and write $N_T(x,U)$ or $N_T(U)$.
\end{notation}

Thus $R\subset \N$ is a set of recurrence if for every minimal system $(X,T)$ and every nonempty open set $U\subset  X$, there exists $n\in R$ such that $n\in N(U)$.

We recall a standard definition: 
\begin{definition}
A subset of integers is {\em syndetic} if the 
differerence between two consecutive elements is bounded.  
\end{definition}

We have the following classical equivalences 
(see, for example~\cite{FW, furstenberg, furstenberg1, bergelson0, Bergelson, McCutcheon2}).  
We omit the proofs, as simple recurrence is a special case of the more general result for multiple recurrence  (Theorem~\ref{th:mult-top-recur}):

\begin{theorem}
\label{th:rec}
For a set $R\subset \N$, the following are equivalent:
\begin{enumerate}
\item
\label{it:top-rec2}
$R$ is a set of recurrence.
\item
\label{it:top-rec1}
For every system $(X,T)$ and every open cover $\CU=(U_1,\dots,U_r)$ of $X$, there exists $j\in\{1,\dots,r\}$ and $n\in R$ such that 
$n\in N(U_j)$.
\item
\label{it:large}
For every finite partition $\N=C_1\cup\dots\cup C_r$ of $\N$,  there is some cell $C_j$ containing two integers whose difference belongs to $R$.
\item
\label{it:intersective}
Every syndetic subset $E$ of $\N$ contains two elements whose difference belongs to $R$.
\item
\label{it:returnx1}
For every system $(X,T)$ and every $\ve>0$, there exist  $x\in X$ and  $n\in R$ such that  $d(T^nx,x)<\ve$.
\item
\label{it:returnx}
For every system $(X,T)$, there exists $x\in X$ 
such that $$\inf_{n\in R} d(T^{n}x,x) = 0.$$
\item
\label{it:returnx2}
For every minimal system $(X,T)$ there exists a dense $G_\delta$ set $X_0\subset  X$ such that for every $x\in X_0$, 
$$\inf_{n\in R} d(T^{n}x,x) = 0.$$
\end{enumerate}
\end{theorem}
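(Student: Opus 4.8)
This is the case $s=1$ of Theorem~\ref{th:mult-top-recur}, so it follows from the general result; but since it is the template for that argument, I will outline how to prove it directly. The plan is to run the single cycle
\[
\mathrm{(i)}\Rightarrow\mathrm{(vii)}\Rightarrow\mathrm{(vi)}\Rightarrow\mathrm{(v)}\Rightarrow\mathrm{(ii)}\Rightarrow\mathrm{(iii)}\Rightarrow\mathrm{(iv)}\Rightarrow\mathrm{(i)}.
\]
Three of these arrows are purely formal. For (vii)$\Rightarrow$(vi), every system contains a minimal subsystem by Zorn's lemma, to which (vii) applies. For (vi)$\Rightarrow$(v), the single point produced by (vi) already witnesses (v) for every $\ve>0$. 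And for (v)$\Rightarrow$(ii), I would take a Lebesgue number $\delta>0$ for the cover $U_1,\dots,U_r$, apply (v) with $\ve=\delta$ to obtain $x\in X$ and $n\in R$ with $d(T^nx,x)<\delta$, and note that $\{x,T^nx\}$ then lies in some $U_j$, so $x\in U_j\cap T^{-n}U_j$.

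The first arrow with real content is (i)$\Rightarrow$(vii), a Baire category strengthening. For a minimal system $(X,T)$ and each $k\ge1$ I would consider the open set $W_k=\bigcup_{n\in R}\{x\in X:d(T^nx,x)<1/k\}$. To see $W_k$ is dense, shrink an arbitrary nonempty open set to a nonempty open set of diameter less than $1/k$ and apply (i) to it: the resulting return lies in $W_k$. Then $X_0=\bigcap_{k\ge1}W_k$ is a dense $G_\delta$, and $\inf_{n\in R}d(T^nx,x)=0$ for every $x\in X_0$.

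The combinatorial heart is the block (ii)$\Rightarrow$(iii)$\Rightarrow$(iv)$\Rightarrow$(i). For (ii)$\Rightarrow$(iii) I would use the topological correspondence: encode a partition $\N=C_1\cup\dots\cup C_r$ as a point $\omega$ of the two-sided shift on $r$ symbols (with arbitrary values at negative coordinates), pass to the $\omega$-limit set $Y$ of $\omega$ under the shift, equip $Y$ with the clopen cover $U_j=\{y\in Y:y(0)=j\}$, and extract from (ii) a point $y\in Y$ and $n\in R$ with $y(0)=y(n)=j$; approximating $y$ by a far-forward shift of $\omega$ then returns $m\ge1$ with $m,m+n\in C_j$. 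For (iii)$\Rightarrow$(iv), a syndetic set $E$ with gaps at most $g$ satisfies $\N\subset\bigcup_{i=0}^{g-1}(E-i)$, and disjointifying this into a partition of $\N$ and feeding it to (iii) produces two elements of some $E-i$ differing by an element of $R$, hence two such elements of $E$. For (iv)$\Rightarrow$(i), given a minimal $(X,T)$ and a nonempty open $U$, the sets $T^{-n}U$ with $n\in\Z$ cover $X$ by minimality, hence finitely many do, which forces $N(x,U)$ to be syndetic for any $x\in X$; then (iv) produces $a<b$ in $N(x,U)$ with $b-a\in R$, so $T^ax\in U\cap T^{-(b-a)}U$.

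I expect the only step needing genuine care to be (ii)$\Rightarrow$(iii). Since a system must be a homeomorphism one is forced to work with the \emph{two-sided} shift, and then one must ensure that the occurrence of the color $j$ extracted from (ii) can be found at \emph{positive} integers; this is precisely why one passes to the $\omega$-limit set of $\omega$ (so the approximating shifts may be taken arbitrarily far forward) rather than to a one-sided subshift. Everything else is routine, and running the same scheme with the single intersection $U\cap T^{-n}U$ replaced throughout by $U\cap T^{-n}U\cap\dots\cap T^{-sn}U$ yields the multiple-recurrence statement of Theorem~\ref{th:mult-top-recur}.
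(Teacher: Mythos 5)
Your proposal is correct and follows essentially the same chain of implications as the paper's proof of the general multiple-recurrence version (Theorem~\ref{th:mult-top-recur}), namely (i)$\Rightarrow$(vii)$\Rightarrow$(vi)$\Rightarrow$(v)$\Rightarrow$(ii)$\Rightarrow$(iii)$\Rightarrow$(iv)$\Rightarrow$(i), with the same Baire category, Lebesgue number, correspondence-principle, and syndeticity arguments. The only difference is that you spell out the correspondence step (ii)$\Rightarrow$(iii) in more detail (two-sided shift, $\omega$-limit set, far-forward approximation) where the paper simply invokes the standard topological correspondence principle; your extra care there is sound.
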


A set $R$ satisfying characterization~\eqref{it:intersective} is referred to as {\em (chromatically) intersective} in the combinatorics literature.

It is easy to check that the existence of some  $n\in R$ satisfying any of properties~\eqref{it:top-rec2}, \eqref{it:top-rec1} or~\eqref{it:returnx1}
implies that there exist infinitely many $n\in R$ with the same property. 

\begin{example}
\label{remark:diff}
For $S\subset\N$, write $S-S=\{s'-s\colon  s,s'\in S,\ s'>s\}$.
Furstenberg~\cite{furstenberg}
showed that if $S$ is infinite, then $S-S$ is a set of recurrence and 
this follows immediately from characterization~\eqref{it:large} in Theorem~\ref{th:rec}.
More generally, it is easy to check that if for every $n\in\N$ there exists $S_n\subset\N$ such that $|S_n|=n$ and $S_n-S_n\subset R$,  then $R$ is a set of recurrence. 
\end{example}
We defer further examples of sets of recurrence until we have defined the more general notion of multiple recurrence.

There is another equivalent formulation of recurrence due to Katznelson~\cite{katznelson}.  For a set $R\subset \N$, 
the {\em Cayley graph $G_R$} is defined to be the graph whose vertices are the natural numbers $\N$ and whose 
edges are the pairs $\{(m, m+n)\colon m\in\N, n\in R\}$.  The {\em chromatic number $\chi(R)$} is defined to be the smallest number of colors needed to color $G_R$ such that any two vertices connected by an edge have distinct colors.  
Katznelson showed that characterization~\eqref{it:large} of Theorem~\ref{th:rec} for a set of recurrence $R$ is equivalent to the associated Cayley graph $G_R$ having infinite chromatic number.

For the analogous notion of a set of measure theoretic recurrence, where the underlying space is a probability measure space and the transformation is a measurable, measure preserving transformation, we have a similar 
list of equivalent characterizations, where a finite partition of $\N$ is replaced by 
sets of positive upper density.  
As every minimal system $(X,T)$ admits a $T$-invariant measure with full support, a set of measurable recurrence is also a set of topological recurrence.  However, an intricate example of Kriz~\cite{Kriz} shows that the converse does not hold.  

\subsection{Multiple recurrence}
\label{sec:multiple}

Most of the formulations of single recurrence generalize to multiple recurrence: 
\begin{notation}
For $\ell\geq 1$, we write 

$$
N^\ell(U)=\{n\in\N\colon U\cap T^{-n}U\cap T^{-2n}U\cap\dots\cap
T^{-\ell n}U\neq\emptyset\}
$$
for the return times of the set $U$ to itself along a progression of length $\ell+1$.  
In case of ambiguity,  we include the transformation in our notation and write $N^\ell_T(U)$.
\end{notation}
\begin{theorem}
\label{th:mult-top-recur}
Let $\ell\geq 1$ be an integer. 
For a set $R\subset \N$, the following properties are equivalent:
\begin{enumerate}
\item
\label{it:top-rec2l}
For every minimal system $(X,T)$ and every nonempty open set $U\subset  X$, there exists $n\in R$ such that $n\in N^\ell(U)$.
\item
\label{it:top-rec1l}
For every system $(X,T)$ and every open cover $\CU=(U_1,\dots,U_r)$ of $X$, there exists $j\in\{1,\dots,r\}$ and $n\in R$ such that $n\in N^\ell(U_j)$.
\item
\label{it:largel}
For every finite partition $\N=C_1\cup\ldots\cup C_r$ of $\N$, there is some cell $C_j$ that contains an arithmetic progression of length $\ell+1$ whose common difference belongs to $R$.
\item
\label{it:intersectivel}
Every syndetic set $E\subset \N$ contains 
 an arithmetic progression of length $\ell+1$ whose common difference belongs to $R$.
  \item
\label{it:returnxl}
For every system $(X,T)$ and every $\ve>0$, there exist $x\in X$ and  $n\in R$ such that  
$$
\sup_{1\leq j\leq \ell} d(T^{jn}x,x) <\ve.$$
\item
\label{it:returnx3l}
For every system $(X,T)$, there exists $x\in X$ such that 
$$\inf_{n\in R}\sup_{1\leq j\leq \ell} d(T^{jn}x,x) = 0.$$
\item
\label{it:returnx2l}
For every minimal system $(X,T)$, there exists a dense $G_\delta$-set $X_0\subset  X$ such that for every $x\in X_0$, 
$$\inf_{n\in R}\sup_{1\leq j\leq \ell} d(T^{jn}x,x) = 0.$$
\end{enumerate}
\end{theorem}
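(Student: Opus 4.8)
The plan is to prove the cyclic chain of implications
\eqref{it:top-rec2l}$\Rightarrow$\eqref{it:largel}$\Rightarrow$\eqref{it:intersectivel}$\Rightarrow$\eqref{it:top-rec2l},
together with the auxiliary equivalences
\eqref{it:top-rec2l}$\Leftrightarrow$\eqref{it:top-rec1l} and
\eqref{it:returnxl}$\Leftrightarrow$\eqref{it:returnx3l}$\Leftrightarrow$\eqref{it:returnx2l}$\Leftrightarrow$\eqref{it:top-rec2l},
so that every statement is pinned to the topological one. Most of these are soft. For \eqref{it:top-rec1l}$\Rightarrow$\eqref{it:top-rec2l}, cover $X$ by finitely many balls of radius $\varepsilon$; for the converse, given a cover $(U_1,\dots,U_r)$ pass to a minimal subsystem $Y\subset X$, note some $U_j$ meets $Y$ in a nonempty relatively open set, and apply \eqref{it:top-rec2l} there. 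For \eqref{it:top-rec2l}$\Leftrightarrow$\eqref{it:returnxl}: if $x\in X$ and $n\in R$ satisfy $\sup_{1\le j\le\ell}d(T^{jn}x,x)<\varepsilon$, take $U$ the $\varepsilon/2$-ball around $x$; conversely, having found such $U$ and $n$, any point of $U\cap T^{-n}U\cap\dots\cap T^{-\ell n}U$ works after shrinking $\varepsilon$. The $G_\delta$ statement \eqref{it:returnx2l} follows from \eqref{it:top-rec2l} by a Baire category argument: for each $k$ the set $\{x\colon \exists\,n\in R,\ \sup_{1\le j\le\ell}d(T^{jn}x,x)<1/k\}$ is open, and it is dense because it contains, for every nonempty open $V$, a point supplied by applying \eqref{it:top-rec2l} to the cover of $X$ refining $\{V\}$ (use minimality so that the return orbit can be steered into $V$); intersect over $k$. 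Then \eqref{it:returnx2l}$\Rightarrow$\eqref{it:returnx3l}$\Rightarrow$\eqref{it:returnxl} is immediate.

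The combinatorial heart is the Furstenberg correspondence in both directions. For \eqref{it:top-rec2l}$\Rightarrow$\eqref{it:largel}: given a finite partition $\N=C_1\cup\dots\cup C_r$, form the point $\omega\in\{1,\dots,r\}^{\Z}$ with $\omega(n)$ the index of the cell containing $n$ (extending arbitrarily to negative integers), let $X=\overline{\{S^k\omega\colon k\in\Z\}}$ under the shift $S$, and pass to a minimal subsystem $(Y,S)$. The cylinder sets $[j]=\{\eta\in Y\colon \eta(0)=j\}$ that are nonempty form an open cover of $Y$, so by \eqref{it:top-rec1l} there is $j$ and $n\in R$ with $[j]\cap S^{-n}[j]\cap\dots\cap S^{-\ell n}[j]\neq\emptyset$; a point $\eta$ in this intersection is approximated by some shift $S^m\omega$ on the coordinates $0,n,2n,\dots,\ell n$, which forces $m,m+n,\dots,m+\ell n$ all to lie in $C_j$. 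For \eqref{it:intersectivel}$\Rightarrow$\eqref{it:top-rec2l}: given a minimal $(X,T)$ and nonempty open $U$, fix $x_0\in X$; by minimality $N(x_0,U)$ is syndetic, so by \eqref{it:intersectivel} it contains a progression $m,m+n,\dots,m+\ell n$ with $n\in R$, and then $T^m x_0\in U\cap T^{-n}U\cap\dots\cap T^{-\ell n}U$. The trivial implication \eqref{it:largel}$\Rightarrow$\eqref{it:intersectivel} goes by 2-coloring $\N$ according to membership in the syndetic set $E$: the complement of $E$ contains no long progression with common difference $n$ once $n$ exceeds the gap bound, hence the monochromatic progression guaranteed by \eqref{it:largel} must lie in $E$ (choosing $n\in R$ large, which is legitimate since, as remarked after Theorem~\ref{th:rec}, these properties automatically hold for infinitely many $n\in R$).

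The step I expect to require the most care is the symbolic correspondence \eqref{it:top-rec2l}$\Rightarrow$\eqref{it:largel}, specifically the passage to a minimal subsystem and the verification that the relatively open cylinders genuinely cover it. One must check that minimality of $(Y,S)$ is compatible with the combinatorial reading-off: the point $\eta$ in the multiple-recurrence intersection lies in $Y$, not necessarily in the orbit of $\omega$, but it is a limit of shifts $S^{m_i}\omega$, and since the intersection $[j]\cap S^{-n}[j]\cap\dots\cap S^{-\ell n}[j]$ is open, $\eta$ already has an open neighborhood contained in it, so some $S^{m_i}\omega$ lands there; this is where one needs the edges of the Cayley graph to have infinitely many valid differences so as to stay within the scale where the approximation is exact. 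The remaining technicality is purely notational: everything is stated for $\N$ but the shift space naturally lives over $\Z$, so one extends colorings and only reads off positive-index information, which is harmless.
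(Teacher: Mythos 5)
Two of your links are broken, and together they disconnect the cycle of implications you need.

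First, your implication \eqref{it:largel}$\Rightarrow$\eqref{it:intersectivel} rests on a false claim. You $2$-color $\N$ by $E$ and $\N\setminus E$ and assert that the complement of a syndetic set contains no long arithmetic progression once the common difference exceeds the gap bound. That is not true: take $E=2\N$, syndetic with gaps $2$; its complement, the odd numbers, contains arithmetic progressions of every length with every even common difference, however large. So the monochromatic progression produced by \eqref{it:largel} may well sit in $\N\setminus E$, and choosing $n\in R$ large does not help. The correct (and still short) argument uses $r$ colors rather than $2$: if $E$ has gaps at most $r$, then $(E-1)\cup\dots\cup(E-r)\supset\N$, so one may choose a partition $\N=C_1\cup\dots\cup C_r$ with $C_j\subset E-j$; a monochromatic progression in $C_j$ with common difference $n\in R$ translates by $j$ to a progression in $E$ with the same common difference. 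This is the step the paper uses, and without it your main cycle through \eqref{it:intersectivel} does not close.

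Second, your arguments for \eqref{it:top-rec1l}$\Rightarrow$\eqref{it:top-rec2l} and for \eqref{it:returnxl}$\Rightarrow$\eqref{it:top-rec2l} prove less than claimed. Covering $X$ by $\ve$-balls and applying \eqref{it:top-rec1l}, or taking $U$ to be the $\ve/2$-ball around the almost-returning point $x$, gives recurrence for \emph{some} small ball --- i.e., it gives \eqref{it:returnxl} --- not recurrence for the \emph{arbitrary prescribed} open set $U$ of \eqref{it:top-rec2l}. Since the only arrows you draw from \eqref{it:returnxl}, \eqref{it:returnx3l}, \eqref{it:returnx2l} back into the rest of the diagram pass through this step, conditions (v)--(vii) are never shown to imply (i)--(iv). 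The standard repair is the Lebesgue-number argument in the other direction, \eqref{it:returnxl}$\Rightarrow$\eqref{it:top-rec1l}: the $\ve$-ball around $x$ lies inside some $U_j$ when $\ve$ is a Lebesgue number of the given cover, and one then closes the loop through \eqref{it:top-rec1l}$\Rightarrow$\eqref{it:largel}$\Rightarrow$\eqref{it:intersectivel}$\Rightarrow$\eqref{it:top-rec2l}. (Alternatively, for a minimal system one may cover $X$ by the sets $T^{-k}U$ and note $N^\ell(T^{-k}U)=N^\ell(U)$.) The remainder of your outline --- the symbolic correspondence for \eqref{it:top-rec1l}$\Rightarrow$\eqref{it:largel}, the syndeticity of $N(x_0,U)$ for \eqref{it:intersectivel}$\Rightarrow$\eqref{it:top-rec2l}, and the Baire category argument for \eqref{it:returnx2l} --- matches the paper's route and is fine modulo the usual care about taking the minimal subsystem inside the forward orbit closure so that the indices read off are positive.
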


\begin{definition}
A set satisfying any of the equivalent properties in Theorem~\ref{th:mult-top-recur}
is called a {\em set of $\ell$-recurrence}; in particular, a set of $1$-recurrence is a set of recurrence.  A set of  $\ell$-recurrence for all $\ell\geq 1$ is a called a {\em set of multiple  recurrence}.
\end{definition}

When we want to emphasize that we are discussing single recurrence, instead of just writing a set 
of recurrence, we say a set of {\em single} or {\em simple} recurrence. 

The proofs of these equivalences are well known and appear scattered in the literature (see, for example~\cite{FW, furstenberg, furstenberg1, bergelson0, Bergelson, McCutcheon1, McCutcheon2, nikos, BR, FM, BG}) and so  we only include brief sketches of the proofs.

\begin{proof}
\eqref{it:top-rec2l} $\Longrightarrow$ \eqref{it:returnx2l}
For $\ve>0$,  define $\Omega_\ve$ to be 
$$\{x\in X\colon\text{ there exists } n\in R \text{ such that } d(T^nx,x)<\ve,\dots d(T^{\ell n}x,x)<\ve\}.
$$
Then $\Omega_\ve$ is an open subset of $X$. Let $U\subset  X$ be an open ball of radius $\delta<\ve/2$.  By hypothesis, there exists $n\in R$ such that
$U\cap T^{-n}U\cap\dots\cap T^{-\ell n}U\neq\emptyset$. This intersection is included in $\Omega_\ve$ and so 
$\Omega_\ve$ is dense in $X$.   Then $X_0 = \bigcap_{m\in\N}\Omega_{1/m}$ is a $G_\delta$ set that  satisfies the statement.  

\eqref{it:returnx2l} $\Longrightarrow$ \eqref{it:returnx3l}
This is immediate by applying~\eqref{it:returnx2l} to a minimal closed invariant subset of $X$.

\eqref{it:returnx3l} $\Longrightarrow$  \eqref{it:returnxl} Obvious.

\eqref{it:returnxl} $\Longrightarrow$ \eqref{it:top-rec1l}
Let $\ve$ be the Lebesgue number of the cover $\CU$, meaning that any open ball of radius $\ve$ is  contained in some element of this cover. 
Let $x\in X$ and $n\in R$ be associated to $\ve$ as in \eqref{it:returnxl}. Let  $j\in\{1,\dots,r\}$ be such that the ball of radius $\ve$ around $x$ is included in $U_j$.  Then all of the points $x,T^nx,\dots,T^{\ell n}x$ belong to this ball and thus to $U_j$.

\eqref{it:top-rec1l} $\Longrightarrow$ \eqref{it:largel}. This is a standard application of the topological version of Furstenberg's Correspondence Principle.  Given the partition $\N=C_1\cup\dots\cup C_r$, there exist a system $(X,T)$, a partition $X=U_1\cup\dots\cup U_r$ of $X$ into clopen sets, and a point $x\in X$ such that for every $n\in\N$, we have $T^nx\in U_j$ if and only if $n\in C_j$.

\eqref{it:largel} $\Longrightarrow$ \eqref{it:intersectivel}
Choose $r\in\N$ such that $(E-1)\cup(E-2)\cup\ldots \cup(E-r)\supset \N$ and then chose a partition $\N=C_1\cup\dots\cup C_r$ such that $C_j\subset  E-j$ for $j\in\{1, \ldots, r\}$.

\eqref{it:intersectivel} $\Longrightarrow$  \eqref{it:top-rec2l}
Choose $x\in X$ and set $E=\{n\colon T^nx\in U\}$.
\end{proof}

As for single recurrence, the existence of some $n\in R$ satisfying any of properties~\eqref{it:top-rec2l}, 
\eqref{it:top-rec1l}, or \eqref{it:returnxl} immediately implies the existence of infinitely many $n\in R$ with the same property. 

It is easy to verify that a set of (single or multiple) recurrence must satisfy several necessary conditions: it must contain infinitely many multiples of every positive integer (consider the powers of the transformation) and it can not be lacunary (by constructing an irrational rotation that fails to recur).  Furthermore, the family of sets of recurrence has the 
Ramsey property (see Section~\ref{sec:Ramsey}).

The classic theorem of van der Waerden shows that $\N$ 
is a set of multiple recurrence.  Furstenberg~\cite[Theorem 2.16]{furstenberg} shows that $N(x, U)$ is a set of multiple recurrence for any open set $U$ and point $x\in U$.  
This is also a particular case of a more general theorem of Huang, Song, and Ye~\cite{HSY}, reviewed in Theorem~\ref{theorem:Nxu-rec}. 

There are many other known examples of sets of multiple recurrence: any $\ip$-set  (a set which contains all the finite sums of an infinite set of integers, see Definition~\ref{def:IP}),  the set $\{p(n)\colon n\in\N\}$, where $p(n)$ is any non-constant polynomial with $p(0) = 0$,  the shifted primes $\{p-1\colon p\text{ is prime}\}$ and $\{p+1\colon p\text{ is prime}\}$, 
as well as other examples in the literature (see for example~\cite{FW, wierdl, BL, Bergelson, BHM, FHK})

There are also examples in the literature that show that sets of multiple recurrence are different than sets of single recurrence.  For example, Furstenberg~\cite{furstenberg} gives an example of a set of single recurrence that is not a set of double recurrence and Frantzikinakis, Lesigne and Wierdl~\cite{FLW} give examples of  sets of $\ell$-recurrence that are not sets of $(\ell+1)$-recurrence.  We give a more general characterization of such sets in Section~\ref{sec:mult-RP}. We note that all of the examples constructed in this way are large, in the sense that they have positive density.

However, there are characterizations of single recurrence for which we do not have a multiple analog: 
\begin{question}
\label{question:multiple1}
Is there an equivalent characterization of multiple recurrence analogous to Katznelson's characterization in terms of the chromatic number of an associated graph?  For example, is being a set of multiple recurrence equivalent to infinite chromatic number for some  associated hypergraph? 
\end{question}

Along similar lines, we do not know of a simple construction, like that of the difference set, that suffices to produce multiple recurrence: 
\begin{question}
\label{question:multiple2}
Is there a  sufficient condition, analogous to that given in Example~\ref{remark:diff}, 
 that suffices for being a set of multiple recurrence?
\end{question}

\subsection{Simultaneous recurrence} 

More generally, we can study recurrence for commuting transformations and not just powers of a single transformation: 
\begin{definition}
The set $R\subset \N$ is a set of {\em $\ell$-simultaneous recurrence} if for any compact metric space $X$ endowed with 
$\ell$ commuting homeomorphisms $T_1, \ldots, T_\ell\colon X\to X$ 
such that the system $(X, T_1, \ldots, T_\ell)$ is minimal and any nonempty open set $U\subset  X$, there exists $n\in R$ such that 
$$U\cap T^{-n}_1U\cap\ldots\cap T^{-n}_\ell U\neq\emptyset.
$$

 A set of  $\ell$-simultaneous recurrence for all $\ell\geq 1$ is a called a {\em set of simultaneous recurrence}.
\end{definition}

Taking $T_1 = T, T_2 = T^2, \ldots, T_\ell = T^\ell$, 
it is obvious that any set of simultaneous recurrence is also a set of multiple recurrence.  We do not know 
if the converse holds: 
\begin{question}
Does there exist a set of multiple recurrence that is not a set of simultaneous recurrence? 
\end{question}

All of the examples of sets of multiple recurrence given in Section~\ref{sec:multiple} are also known to be 
sets of simultaneous recurrence.

All parts of Theorem~\ref{th:mult-top-recur} have natural analogs for simultaneous recurrence.  
To ease the notations, we restrict ourselves to $\ell=2$.  
It is easy to check that the analog of condition~\eqref{it:largel} holds: 
namely, $R$ is a set of recurrence  if for every partition $\N = C_1\cup\ldots\cup C_r$, 
there exists $x,y\in\N$ and $n\in R$ such that 
$(x,y), (x+n, y), (x, y+n)$ all lie in the same cell $C_j$ for some $j\in\{1, \ldots, r\}$.  
One can give similar formulations for the other equivalences in Theorem~\ref{th:mult-top-recur} for simultaneous 
recurrence.  

Unsurprisingly, we do not know how to address the analogs of Questions~\ref{question:multiple1} and~\ref{question:multiple2} for simultaneous recurrence.

\subsection{Pointwise recurrence}

\begin{definition}
A set $R\subset \N$ is \emph{a set of pointwise recurrence} if for every minimal system and  every $x\in X$, 
$$
\inf_{n\in R}d(T^nx,x)=0.
$$\end{definition}

The analog for multiple pointwise recurrence is not defined, as one can construct an example (such as using 
symbolic dynamics) of a minimal system $(X,T)$, as open set $U\subset X$, and $x\in U$ such that $N^2(x,U) = \emptyset$.  In 
particular, $\N$ is not a set of pointwise multiple recurrence.  However, in a minimal system, there is always 
a dense set of points that are multiply recurrent.

Recall that by characterization~\eqref{it:returnx2} of Theorem~\ref{th:rec}, if $R$ is a set of  recurrence then this property holds for  $x$ in a dense $G_\delta$ of $X$.
Comparing the definition of pointwise recurrence 
with characterization~\eqref{it:returnx} of  recurrence in Theorem~\ref{th:rec} makes this property seem natural.  
However, being a set of pointwise recurrence turns out to be a significantly stronger assumption.
S\'ark\H ozy~\cite{sarkozy} (using number theoretic methods) and Furstenberg~\cite{furstenberg} (using dynamics) showed that the 
set of squares is a set of recurrence, but Pavlov~\cite{pavlov} showed that it is not a set of pointwise recurrence.  
Similarly, it follows from results in Pavlov that if one takes $S$ to be a sufficiently fast growing sequence, then 
$S-S$ is not a set of pointwise recurrence (but as noted in Example~\ref{remark:diff}, it is a set of recurrence).  

\begin{notation}
For $t\in\R$, we use $\norm{t}$ to denote the distance of $t$ to the nearest integer.  For $t\in\T=\R/\Z$, $\norm{t}$ denotes the distance to $0$.
\end{notation}

\begin{example}
One can check directly that  for every $\alpha\in\T$, the set $R = \{n\in \N\colon \norm{n^2\alpha}\geq 1/4\}$ is not a set of pointwise recurrence by using an affine nilsystem (see Example~\ref{example:FLW}).  
In~\cite{FLW}, the authors show, in particular,  that $R$ is a set of measurable recurrence, 
and thus also of recurrence. We briefly outline their method. If $\alpha$ is irrational, by Weyl equidistribution,  for every non-zero $t\in[0,1)$, the averages 
$$\frac{1}{N}\sum_{n=1}^N e^{2\pi i kn^2\alpha} e^{2\pi i nt} $$
converge to $0$ as $N\to\infty$ for every non-zero integer $k$.  It follows that  the averages
$$
\frac{1}{N}\sum_{n=1}^N\one_I(n^2\alpha) e^{2\pi i nt},
$$ 
where $I = [1/4, 3/4]$, converge to $0$ for $t\neq 0$ when $N\to\infty$ and that the limit is $1/2$ when $t=0$. 
By the spectral theorem, it follows that 
for any ergodic measure preserving system $(X, \CB, \mu, T)$ and $A\in \CB$ with $\mu(A)>0$, 
$$
\frac{1}{N}\sum_{n=1}^N \one_I(n^2\alpha)\mu(A\cap  T^n A) \to \frac{1}{2}\mu(A)^2, 
$$
and the positivity of the limit implies the recurrence.  

A generalization of this example is given in Corollary~\ref{cor:Nxrecir}. 
\end{example}

We ask if there exist equivalent characterizations of pointwise recurrence: 
\begin{question}
Is there a combinatorial analog of pointwise recurrence?  Are there sufficient conditions for being a set of 
pointwise recurrence? 
\end{question}

While simple recurrence does not imply multiple recurrence (see further discussion in Section~\ref{sec:mult-RP}), 
this may hold under the stronger notion of pointwise recurrence: 
\begin{question}
Does pointwise recurrence imply multiple recurrence?
\end{question}

We give a partial answer to this question in Section~\ref{sec:distal}.
\section{Recurrence for families of systems} 
\label{sec:recurrence-for-families}

\subsection{Questions for families of systems}

We define the notion of a set of $\ell$-recurrence for a given system in the obvious way:
\begin{definition}
If $\CF$ is a family of systems, a set $R\subset \N$ is a {\em set of  recurrence for the family $\CF$} if for any minimal
system $(X,T)$ in the family  $\CF$ and any nonempty open set $U\subset  X$, there exists $n\in R$ such that 
$U\cap T^{-n}U\neq\emptyset$.  The notions of a \emph{set of $\ell$-recurrence} and a \emph{set of multiple recurrence for the family $\CF$} are defined in the same way.
\end{definition}

We can take the family $\CF$ to be rotations, nilsystems, distal systems, or any other class of systems.  
While it is obvious that a set of recurrence in some class is a set of recurrence for a sub-class, we are interested in the converse.  Broadly stated, we 
ask: for which classes of systems does recurrence or multiple recurrence imply the same property in some larger class?

Furthermore, we are interested in relations between the various notions of recurrence.  We have different types of recurrence, including single, multiple, and pointwise recurrence, all of which are distinct notions.  For which classes of systems do these properties coincide?

We study these questions for distal systems in 
 Section~\ref{sec:distal} and for 
 nilsystems in Sections~\ref{sec:lift-Bohr} and~\ref{sec:mmultiple}.

While the equivalent formulations of multiple recurrence that are dynamical in nature carry over for the restriction to particular families of systems, we do not have combinatorial equivalences for classes of systems, 
and it is natural to ask if there are combinatorial versions of recurrence for particular classes of systems.

\subsection{Bohr recurrence}
\label{sec:Bohr-charac}
We start with the simplest types of systems:
\begin{definition}
A set of recurrence for minimal translations on a compact abelian group is called a set of {\em Bohr recurrence}. 
\end{definition}
Thus $R$ is a set of Bohr recurrence if for all $k\in\N$, all $\alpha_1, \ldots, \alpha_k\in\T$, and all $\varepsilon > 0$, there exists $n\in R$ such that 
 $ \norm{\alpha_1n}<\ve,\dots,\norm{\alpha_kn}<\ve$.
 It follows immediately that there are infinitely many $n\in R$ satisfying this condition.  
 
We can also define a set of Bohr recurrence in terms of \bohrzero sets:
\begin{definition}
 A set $E\subset \N$ is a \emph{\bohrzero set} if it
  contains a set of integers of the form
$$
\{n\in\N\colon \norm{\alpha_1n}<\ve,\dots,\norm{\alpha_kn}<\ve\}, 
$$
where $k\in\N$, $\alpha_1,\dots,\alpha_k\in\T$ and $\ve>0$. The minimum value of $k$ such that this occurs is called the \emph{dimension} of the \bohrzero set.
\end{definition}

It follows immediately from the definitions that a set is a set of Bohr recurrence if and only if 
it is a \bohrzeroo set, meaning it has nonempty intersection with any \bohrzero set.  
Thus a set of Bohr recurrence is a set of multiple pointwise recurrence for translations on a compact abelian group, with no assumption of minimality required.

A well known question, asked in particular by Katznelson (see also the discussion in~\cite{weiss}) is:
\begin{question}[Katznelson~\cite{katznelson}]
\label{q:katznelson}
Is a set of Bohr recurrence a set of recurrence? 
\end{question}

This question leads us to multiple sub-questions about what types of 
extensions preserve sets of recurrence and of multiple recurrence.

\subsection{Recurrence and proximal extensions}

We start with the classic notion of a proximal extension (see, for example,~\cite{Aus}):
\begin{definition}
Let $(X,T)$ be a system. The points  $x_1,x_2\in X$ are \emph{proximal} if
$$
\inf_{n\in\N}d(T^nx_1,T^nx_2)=0.
$$
A set $F\subset  X$ is {\em proximal} if every pair of points in $F$ is proximal.

We say that the factor map $\pi\colon (X,T)\to (Y,S)$ is  a \emph{proximal extension} if the fiber $\pi\inv(\{y_0\})$ of every $y_0\in Y$ is proximal.
\end{definition}
In fact, this property holds under weaker assumptions: 
\begin{claim}
\label{cl:prox}
Let  $\pi\colon(X,T)\to (Y,S)$ be a factor map and assume that $(Y,S)$ is minimal and that some $y_0\in Y$ has a proximal fiber. Then $\pi$ is a proximal extension.
\end{claim}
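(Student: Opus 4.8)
The plan is to transport the proximality of the one good fiber to every fiber using minimality, via the standard trick of moving points around by the group action generated by $T$ and $S$. Recall that the relevant closure properties of proximal pairs are: if $x_1, x_2$ are proximal then so are $T^k x_1, T^k x_2$ for every $k \in \Z$ (apply the homeomorphism $T^k$ to a subsequence along which $d(T^n x_1, T^n x_2) \to 0$), and proximality of a pair is detected by a single point in its orbit closure. So first I would fix $y_0 \in Y$ with $\pi^{-1}(\{y_0\})$ proximal, and an arbitrary $y \in Y$; the goal is to show $\pi^{-1}(\{y\})$ is proximal.

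The key step is the following. Take $x_1, x_2 \in \pi^{-1}(\{y\})$. Since $(Y,S)$ is minimal, the orbit of $y$ is dense, so there is a sequence $n_k$ with $S^{n_k} y \to y_0$. Passing to a subsequence, by compactness of $X$ we may assume $T^{n_k} x_1 \to z_1$ and $T^{n_k} x_2 \to z_2$; then $\pi(z_1) = \pi(z_2) = \lim \pi(T^{n_k} x) = \lim S^{n_k} y = y_0$, so $z_1, z_2$ lie in the good fiber and hence are proximal. Now I need to pull this proximality back to the pair $(x_1, x_2)$: this is where a little care is needed, since $T^{-n_k}$ need not converge. The cleanest way is to work in the orbit closure of $(x_1, x_2)$ in $X \times X$ (with the product map $T \times T$) and invoke the fact that $(x_1, x_2)$ is proximal if and only if the orbit closure $\overline{\mathcal{O}(x_1,x_2)}$ meets the diagonal — equivalently, contains a point $(w,w)$. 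Indeed, proximality of $(z_1, z_2)$ gives a sequence $m_j$ with $d(T^{m_j} z_1, T^{m_j} z_2) \to 0$; then $T^{m_j}(z_1, z_2)$ has a convergent subsequence tending to some $(w,w)$ on the diagonal, and $(w,w)$ lies in $\overline{\mathcal{O}(z_1, z_2)} \subset \overline{\mathcal{O}(x_1, x_2)}$. Hence $\overline{\mathcal{O}(x_1,x_2)}$ meets the diagonal, so $(x_1,x_2)$ is proximal.

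I would therefore structure the write-up as: (1) state the two elementary facts about proximal pairs (stability under $T$, detection via diagonal points in the orbit closure in $X \times X$) with one-line justifications; (2) given arbitrary $y$ and $x_1, x_2 \in \pi^{-1}(\{y\})$, use density of the $S$-orbit of $y$ plus compactness to land a subsequence of $(T^{n_k}x_1, T^{n_k}x_2)$ at a point $(z_1, z_2) \in \pi^{-1}(\{y_0\}) \times \pi^{-1}(\{y_0\})$; (3) since that fiber is proximal, $(z_1, z_2)$ is proximal, so its orbit closure meets the diagonal, hence so does that of $(x_1, x_2)$, giving proximality of $x_1, x_2$.

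The main obstacle — really the only subtlety — is step (3): one cannot simply "apply $T^{-n_k}$" to deduce proximality of $(x_1,x_2)$ from that of $(z_1,z_2)$, because $(x_1,x_2)$ is only in the orbit closure of $(z_1,z_2)$ "in reverse," not literally in its forward orbit. Phrasing everything in terms of the orbit closure in $X \times X$ meeting the diagonal sidesteps this, since $(z_1,z_2) \in \overline{\mathcal{O}(x_1,x_2)}$ implies $\overline{\mathcal{O}(z_1,z_2)} \subseteq \overline{\mathcal{O}(x_1,x_2)}$, and "orbit closure meets the diagonal" is exactly a reformulation of proximality that is visibly monotone under inclusion of orbit closures. (No minimality or invariant-measure input beyond density of the $S$-orbit of $y$ is needed, which is why only minimality of $(Y,S)$ is assumed.)
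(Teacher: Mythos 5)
Your argument is correct, but it takes a different route from the paper. The paper introduces the function $\delta(x,x')=\inf_{n\in\N}d(T^nx,T^nx')$ and $\phi(y)=\sup_{x,x'\in\pi\inv(\{y\})}\delta(x,x')$, observes that both are upper semicontinuous and non-decreasing under the dynamics ($\delta(Tx,Tx')\geq\delta(x,x')$, hence $\phi(Sy)\geq\phi(y)$), deduces $\phi(S^{-n}y_0)=0$ for all $n$, and then uses density of the backward orbit of $y_0$ (minimality) together with upper semicontinuity to force $\phi\equiv 0$. You instead argue pointwise and ``forward'': push an arbitrary pair $(x_1,x_2)$ in the fiber of $y$ along times $n_k$ with $S^{n_k}y\to y_0$ to land at a pair $(z_1,z_2)$ in the good fiber, and then pull proximality back via the characterization that a pair is proximal iff its forward orbit closure in $X\times X$ meets the diagonal, which is monotone under inclusion of orbit closures. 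You correctly identify and handle the one genuine subtlety (one cannot simply apply $T^{-n_k}$), and your use of the diagonal criterion is exactly the right fix: since $\overline{\mathcal{O}(x_1,x_2)}$ is closed and forward invariant, it contains $\overline{\mathcal{O}(z_1,z_2)}$ and hence the diagonal point produced by proximality of $(z_1,z_2)$. The two proofs are close in spirit --- your diagonal criterion encodes the same upper semicontinuity and monotonicity of $\delta$ that the paper uses --- but yours avoids having to verify that the fiberwise supremum $\phi$ is upper semicontinuous (which in the paper requires compactness of fibers and a small diagonal-sequence argument), at the cost of invoking the orbit-closure reformulation of proximality; the paper's version packages the statement as a single semicontinuity-plus-minimality argument that treats all fibers at once.
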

\begin{proof}
Assume that the fiber of $y_0\in Y$ is proximal. 
For $x,x'\in X$, let $\delta(x,x')=\inf_{n\in\N} d(T^nx,T^nx')$ and for
 $y\in Y$, let 
 $$\phi(y)=\sup_{x,x'\in\pi\inv(\{y\})}\delta(x,x').$$ 
 Then $\delta$ is an upper semicontinuous function on $X\times X$ and satisfies 
 \break $\delta(Tx,Tx')\geq\delta(x,x')$ for all $x,x'\in X$.  Thus the function $\phi$ on $Y$ is upper semicontinuous and satisfies $\phi(Sy)\geq\phi(y)$. Since $\phi(y_0)=0$, we have that $\phi(S^{-n}y_0)=0$ for every $n\in\N$.  By minimality of $(Y,S)$, 
 we have that $\phi(y)=0$ for every $y\in Y$.
\end{proof}

Properties similar to the following lemma appear in different places in the literature. We provide a proof for completeness. 

\begin{lemma} 
\label{lem:proxi}
Let $\pi\colon (X,T)\to (Y,S)$ be a proximal extension between minimal systems. Then 
for every  $\ell\geq 1$ and all $x_0,\dots,x_\ell$ lying in the same fiber, there exists a sequence of integers $(n_i)$ such that each of the sequences $(T^{n_i}x_0)$, \dots, $(T^{n_i}x_\ell)$ converge to $x_0$.
\end{lemma}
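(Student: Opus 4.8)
The plan is to exploit minimality of $(X,T)$ together with the proximality of the fiber, using the Ellis semigroup (or, equivalently, a direct compactness argument on the product system) to produce a single limiting time-sequence that works simultaneously for all of $x_0,\dots,x_\ell$. First I would observe that since $(X,T)$ is minimal, for any point $x_0$ there is a sequence $(m_i)$ such that $T^{m_i}x_0\to x_0$; the issue is to arrange that the \emph{same} sequence sends each $x_j$ to $x_0$ as well. The key structural fact is that $x_0,\dots,x_\ell$ all lie in one fiber $\pi^{-1}(\{y\})$, and by hypothesis this fiber is proximal, so each pair $(x_i,x_j)$ is proximal. I would then pass to the product system $(X^{\ell+1},T\times\cdots\times T)$ and consider the point $\bar x=(x_0,\dots,x_\ell)$; its orbit closure $Z$ is a minimal subsystem of some closed invariant set, and the goal becomes showing that the diagonal point $(x_0,x_0,\dots,x_0)$ lies in the orbit closure of $\bar x$.

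The main steps, in order: (1) Using proximality of the fiber pairwise, show that $\bar x$ is proximal to the diagonal point $\Delta(x_0)=(x_0,\dots,x_0)$ inside $X^{\ell+1}$ — more precisely, that there is a sequence $(p_i)$ with $T^{p_i}\bar x\to \Delta(z)$ for some $z\in X$; since each coordinate of $T^{p_i}\bar x$ is forced to the same limit, proximality of all the pairs $(x_j,x_0)$ guarantees that along a common subsequence (extracted by iterating the pairwise proximality, which is where a little care is needed) we can drive all coordinates together, reaching a diagonal point $\Delta(z)$. (2) Now use minimality of $(X,T)$: since $T^{p_i}\bar x\to\Delta(z)$, we have $T^{p_i}x_0\to z$, and we may pick a further sequence $(q_i)$ with $T^{q_i}z\to x_0$; because $\Delta(z)$ is a diagonal point, $T^{q_i}\Delta(z)=\Delta(T^{q_i}z)\to\Delta(x_0)$. (3) Combine the two steps by a diagonal extraction: choose indices so that $n_i$ of the form $q_{k(i)}+p_{j(i)}$ satisfy $T^{n_i}\bar x$ close to $T^{q_{k(i)}}\Delta(z)$ close to $\Delta(x_0)$, hence $T^{n_i}x_j\to x_0$ for every $j=0,\dots,\ell$ simultaneously.

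The step I expect to be the main obstacle is (1): extracting a \emph{single} sequence of times along which \emph{all} the coordinates collapse to a common point. Pairwise proximality gives, for each pair, a sequence doing the job, but these sequences are a priori different, and naive concatenation does not obviously converge. The clean way around this is to invoke the Ellis semigroup $E(X)$: proximality of $x_i$ and $x_0$ means there is a minimal idempotent $u\in E(X)$ with $ux_i=ux_0$; since the fiber is proximal one can choose a single minimal left ideal $L$ and a single idempotent $u\in L$ with $ux_0=ux_1=\cdots=ux_\ell=:z$ (this uses that the fiber, being proximal, sits inside one "proximal cell", equivalently that $ux_j$ is independent of $j$ for a suitable $u$). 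Then $u=\lim T^{p_i}$ along a net, and one replaces the net by a sequence using metrizability of $X$; applying $u$ to $\bar x$ yields exactly $\Delta(z)$, completing step (1). Steps (2) and (3) are then routine minimality-plus-diagonalization arguments, and I would present them briefly. An alternative to the Ellis-semigroup language, if one prefers to stay elementary, is to induct on $\ell$: apply the statement for $\ell-1$ to $x_0,\dots,x_{\ell-1}$ to get a sequence pushing them all to $x_0$, apply it also to the pushed image of $x_\ell$, and iterate while controlling errors — but the idempotent argument is cleaner and avoids bookkeeping.
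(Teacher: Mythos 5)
Your overall architecture is reasonable, and steps (2) and (3) — returning the diagonal point $\Delta(z)$ to $\Delta(x_0)$ by minimality of $(X,T)$ and then performing a diagonal extraction — are routine and correct. The gap is in step (1), which is the actual content of the lemma. You assert that there is a single minimal idempotent $u$ with $ux_0=\cdots=ux_\ell$ "since the fiber, being proximal, sits inside one proximal cell." That justification is circular: the proximal cell of $x_0$ is by definition the set of points proximal to $x_0$, so saying the fiber lies in one proximal cell merely restates pairwise proximality. Pairwise proximality gives, for each pair $(x_0,x_j)$, some collapsing element of $E(X)$ (indeed a whole minimal left ideal of them), but these ideals may differ from pair to pair, and the proximal relation is not in general preserved under passing to limits; so neither "iterating the pairwise proximality" nor the one-cell remark produces a common collapsing element without a further argument. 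The same obstruction blocks the sequential version of step (1): after extracting times along which $T^{n_i}x_0,\dots,T^{n_i}x_{\ell-1}$ all converge to a common point $a$ and $T^{n_i}x_\ell$ converges to some $b$, you need $a$ and $b$ to be proximal in order to continue, and pairwise proximality of the original points does not give this by itself.

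What closes the gap — and what your sketch never invokes — is the proximal \emph{extension} structure: since $\pi(T^nx_j)=S^n y_0$ for every $n$ and every $j$, any limit of the tuple $(T^{n_i}x_0,\dots,T^{n_i}x_\ell)$ along a subsequence again lies in a single fiber of $\pi$, and every fiber is proximal by hypothesis, so the limit tuple is again proximal. With this observation the "elementary induction on $\ell$" that you set aside as bookkeeping closes in a few lines, and it is exactly the paper's proof: the limit $a$ of $(T^{n_i}x_\ell)$ satisfies $\pi(a)=\pi(x_0)=y_0$, so $a$ and $x_0$ are proximal and the case $\ell=1$ applies to the pair $(x_0,a)$. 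If you prefer the Ellis-semigroup packaging, the same observation reads $p\bigl(\pi^{-1}(y_0)\bigr)\subset\pi^{-1}(\hat p\, y_0)$ for $p\in E(X)$, and one inducts as before; alternatively, one can note that the set $\{(z_0,\dots,z_\ell)\colon \pi(z_0)=\cdots=\pi(z_\ell)\}$ is closed and invariant and (because any minimal subset of it projects on each pair of coordinates to a minimal subset of the relation $R_\pi$, which must be the diagonal) its unique minimal subset is the diagonal of $X^{\ell+1}$, so the orbit closure of $(x_0,\dots,x_\ell)$ already contains $(x_0,\dots,x_0)$. Any of these repairs works, but some use of the fiber structure is indispensable; as written, step (1) is not proved.
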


\begin{proof}
We proceed by induction on $\ell$. Assume that $\ell=1$, and let $y_0\in Y$, $x_0,x_1\in\pi\inv(\{y_0\})$, and $\ve>0$. By proximality, there exists a sequence of integers $(n_i)$ such that the sequences $(T^{n_i}x_0)$ and $(T^{n_i}x_1)$ converge to the same point $a\in X$. By minimality of $(X,T)$, there exists $m\in\N$ with $d(T^ma,x_0)<\ve/2$. By continuity of $T^m$, for every sufficiently large $i$ and $k=0,1$, we have $d(T^{n_i+m}x_k,x_0)<\ve$.  The result follows for $\ell=1$.

Assume that $\ell>1$ and that the result holds with $\ell-1$ substituted for $\ell$. Let $y_0\in Y$, $x_0,\dots,x_\ell\in\pi\inv(\{y_0\})$, and $\ve>0$. By the induction hypothesis, there exists a sequence of integers $(n_i)$ such that the sequences $(T^{n_i}x_k)$, $0\leq k\leq \ell-1$, converge to $x_0$. Passing to a subsequence, we can assume that the sequence $(T^{n_i}x_\ell)$ converge to a point $a\in X$. For every $i$, we have $\pi(T^{n_i}x_\ell)=\pi(T^{n_i}x_0)$ and, passing to the limit, $\pi(a)=\pi(x_0)=y_0$.
By applying the result for $\ell=1$ to the points $x_0$ and $a$, we obtain the existence of $m\in \N$ with $d(T^mx_0,x_0)<\ve/2$ and $d(T^ma,x_0)<\ve/2$.
By continuity of $T^m$, for every sufficiently large $i$ and every $k$ with $0\leq k\leq\ell$, we have 
$d(T^{n_i+m}x_k,x_0)<\ve$, completing the proof.
\end{proof}

\begin{proposition}
\label{prop:proximal}
Let $\pi\colon (X,T)\to (Y,S)$ be a proximal extension between minimal systems, $\ell\geq 1$, and $R$ be a set of $\ell$-recurrence for $(Y,S)$. Then $R$ is a set of $\ell$-recurrence for $(X,T)$.
\end{proposition}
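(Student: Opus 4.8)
The plan is to reduce $\ell$-recurrence for $(X,T)$ to $\ell$-recurrence for $(Y,S)$ by working with a nonempty open set $U\subseteq X$, pushing down to an open set in $Y$, applying the hypothesis there, and then pulling the recurrence back up through the proximal fiber using Lemma~\ref{lem:proxi}. Concretely, I would use characterization~\eqref{it:top-rec2l} of Theorem~\ref{th:mult-top-recur}: it suffices to show that for every nonempty open $U\subseteq X$ there is $n\in R$ with $U\cap T^{-n}U\cap\dots\cap T^{-\ell n}U\neq\emptyset$. Fix such a $U$ and fix a point $x_0\in U$; by openness pick $\ve>0$ so that the ball $B(x_0,\ve)\subseteq U$.

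First I would set $y_0=\pi(x_0)$. The subtlety is that $\pi$ need not be open, so I cannot directly say $\pi(U)$ is open; instead I would pass to the interior and use minimality. Let $V=\mathrm{int}\,\pi(U)$. Since $(Y,S)$ is minimal and $\pi$ is surjective (as a factor map onto a minimal system with $X$ minimal), $\pi(U)$ has nonempty interior — this follows because $X=\bigcup_{k}T^kU$ by minimality, hence $Y=\pi(X)=\bigcup_k S^k\pi(U)$, so by Baire some $S^k\pi(U)$, and thus $\pi(U)$, is nonmeager and therefore (being the continuous image of a compact set, hence closed... actually $\pi(U)$ need not be closed) — here I would instead argue via the closed set $\pi(\overline{U'})$ for a small closed ball $U'\subseteq U$: it is closed, and the union of its $S$-translates is $Y$, so by Baire it has nonempty interior $V$. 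Then apply the hypothesis that $R$ is a set of $\ell$-recurrence for $(Y,S)$ to $V$: there is $n\in R$ and $y\in V$ with $S^{jn}y\in V$ for $1\le j\le \ell$.

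Now the key step: lift this to $X$. Choosing points in the fibers over $y, Sy,\dots$ does not immediately land us in a common fiber, so instead I would apply the hypothesis more carefully — or, more cleanly, I would first reduce to the case where the open set in $Y$ is small and use that proximal extensions of minimal systems are "essentially" fiberwise recurrent. The cleanest route: by~\eqref{it:returnx2l} applied to $(Y,S)$, pick a point $y$ in a residual set with $\inf_{n\in R}\sup_{1\le j\le\ell}d_Y(S^{jn}y,y)=0$; choose $y$ inside $V$ and choose $x\in\pi^{-1}(y)\cap\overline{U'}$. There is a sequence $n_i\in R$ with $S^{jn_i}y\to y$ for all $j\le\ell$. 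Passing to a subsequence, $T^{jn_i}x\to z_j$ for some $z_j\in X$ with $\pi(z_j)=y$, so all of $z_0=x,z_1,\dots,z_\ell$ lie in the fiber $\pi^{-1}(\{y\})$, with $z_0 = \lim T^{0\cdot n_i}x = x$. Apply Lemma~\ref{lem:proxi} to $x,z_1,\dots,z_\ell$ (relabelling so $x_0 := x$): there is a sequence $(m_p)$ with $T^{m_p}z_j\to x$ for all $0\le j\le\ell$. Fix $p$ large so $d(T^{m_p}z_j,x)<\ve/2$ for every $j$; then fix $i$ large so that $d(T^{m_p+jn_i}x,T^{m_p}z_j)<\ve/2$ for every $j$ by continuity of $T^{m_p}$. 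Hence $d(T^{jn_i}(T^{m_p}x),\,T^{m_p}x)<\ve$... wait — I need the base point to be inside $U$: set $w=T^{m_p}x$; then $d(w,x)<\ve/2$ so $w\in B(x_0',\cdot)$ — more precisely I should have chosen $x$ close enough to $x_0$ from the start, or replaced $U$ by a translate. The honest fix is to note $T^{m_p}x$ and $T^{m_p+jn_i}x$ are all within $\ve$ of $x$, and $x\in\overline{U'}\subseteq U$; so replace $U'$ by a slightly smaller closed ball and $\ve$ accordingly so that the $\ve$-neighborhood of every point of $U'$ lies in $U$. Then $w, T^{n_i}w,\dots,T^{\ell n_i}w$ all lie in $U$, giving $n_i\in N^\ell_T(U)$ with $n_i\in R$, as required.

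The main obstacle I anticipate is the descent step — ensuring one genuinely gets an open set in $Y$ (or an appropriate recurrent point in $Y$) to which the hypothesis applies, since $\pi$ is not assumed open; the Baire-category argument via a closed ball in $X$ handles this. The lifting step is then a routine diagonal/continuity argument combined with Lemma~\ref{lem:proxi}, whose whole purpose is exactly to collapse finitely many fiber points simultaneously back to the base point.
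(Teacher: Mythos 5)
Your argument is correct, and its core --- lifting the return times along a sequence $(n_i)$ in $R$ to limit points $z_1,\dots,z_\ell$ lying in a single fiber, collapsing them back onto the base point via Lemma~\ref{lem:proxi}, and finishing with a continuity estimate for a fixed power of $T$ --- is exactly the paper's mechanism. The difference is in which characterizations from Theorem~\ref{th:mult-top-recur} you use at the two ends. You aim at characterization~\eqref{it:top-rec2l}, which forces you to land in a prescribed open set $U\subseteq X$ and therefore to insert a Baire-category descent (showing $\pi(\overline{U'})$ has nonempty interior so that a good recurrent point of $Y$ has a fiber meeting $\overline{U'}$) plus the shrinking of $U$ to $U'$ at the end. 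The paper instead starts from characterization~\eqref{it:returnx3l} in $Y$ (a single point $y_0$ with $\inf_{n\in R}\sup_{1\le k\le \ell} d(S^{kn}y_0,y_0)=0$), lifts an \emph{arbitrary} $x_0\in\pi^{-1}(\{y_0\})$, and concludes with characterization~\eqref{it:returnxl} in $X$, which only asks for \emph{some} point $z$ with small returns and places no constraint on its location; this makes the entire descent step unnecessary. Your Baire argument is itself correct (it is the standard fact that a factor map between minimal systems sends a set with nonempty interior to a set with nonempty interior), so nothing is wrong --- but switching to the point-based characterizations at both ends would reduce your proof to the paper's shorter one.
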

In particular, this proposition applies to almost 1-1 extensions and asymptotic extensions between minimal systems.  For example, a set of Bohr 
recurrence is a set of multiple recurrence for Sturmian systems, as Sturmian 
systems are almost 1-1 extensions of rotations.  
\begin{proof}
Let $\ve >0$.
By characterization~\eqref{it:returnx3l} in Theorem~\ref{th:mult-top-recur} of sets of $\ell$-recurrence,  there exists $y_0\in Y$ such that 
$$
\inf_{n\in R}\sup_{1\leq k\leq\ell}d(S^{kn}y_0,y_0)=0
$$
and thus there exists a sequence $(n_i)$ in $R$ such that $
S^{kn_i}y_0\to y_0$ for $1\leq k\leq \ell$.

Let  $x_0\in X$ with $\pi(x_0)=y_0$.  
Passing to a subsequence, we can assume that 
$$
\text{for }1\leq k\leq\ell,\text{ the sequence }(T^{kn_i}x_0)\text{ converges in }X.
$$
Letting $x_k$ denote the limit of this sequence, we have that $\pi(x_k)=y_0$

The points $x_0,x_1,\dots,x_\ell$ belong to the fiber $\pi\inv(\{y_0\})$ and this fiber is proximal by hypothesis. By Lemma~\ref{lem:proxi}, there exists a sequence of integers $(m_j)$ such that the sequences $(T^{m_j}x_k)$, $0\leq k\leq\ell$, converge to $x_0$.

Choose $j$ such that 
$$
d(T^{m_j}x_k,x_0)<\ve\text{ for }0\leq k\leq\ell.
$$  
Let $\delta>0$ be such that for $x,x'\in X$,  $d(x,x')<\delta$ implies $d(T^{m_j}x,T^{m_j}x')<\ve$ and let
$i$ be such that $d(T^{kn_i}x_0,x_k)<\delta$ for $1\leq k\leq\ell$. We have that
$d(T^{m_j+kn_i}x_0, T^{m_j}x_k)<\ve$ and $d(T^{m_j+kn_i}x_0, x_0)<2\ve$.

Letting $z=T^{m_j}x_0$, we have that $d(x_0,z)<\ve$ and $d(T^{kn_i}z,x_0)<2\ve$ for $1\leq k\leq\ell$.
By characterization~\eqref{it:returnxl} in Theorem~\ref{th:mult-top-recur} restricted to such systems, $R$ is a set of $\ell$-recurrence for $X$.
\end{proof}

Recall that $\pi\colon (X,T)\to (Y,S)$ is a {\em distal extension} if 
all $x_0\neq x_1\in X$ in the same fiber satisfy 
$\inf_n d(T^nx_0, T^nx_1)> 0$.  
Proposition~\ref{prop:proximal} does not generalize to distal extensions (see the remarks after Corollary~\ref{cor:Nxrecir}), even for simple recurrence, as can be seen by taking an extension of the trivial system.  
However, for the class of nilsystems, this is possible (Theorem~\ref{th:rec-nil}). 

\subsection{Pointwise recurrence in a distal system}
\label{sec:distal}

Recurrence forces structure in return times and this is captured in the notion of $\ip$-sets (see~\cite{furstenberg, Bergelson} 
for background): 
\begin{definition}\label{def:IP}
An {\em $\ip$-set} is a set of integers that contains an infinite sequence of integers $(p_i)_{i\in\N}$, the {\em generators},  and 
all the finite sums $\sum_{j=1}^k p_{i_j}$, where the summands are distinct generators and $k=1, 2, \ldots$.  
An {\em $\ip^*$-set} is a set of integers that has nontrivial intersection with every $\ip$-set. 
A point $x\in X$ is said to be {\em $\ip^*$-recurrent} if for every open neighborhood $U$ of $x$, 
$\{n\in\N\colon T^nx\in U\}$ is an $\ip^*$-set.
\end{definition}

It is easy to check that the return times of any recurrent point contains an $\ip$-set and conversely that for any $\ip$-set, 
there is a dynamical system and a recurrent point whose return times contain this $\ip$-set.  Furthermore, 
Furstenberg~\cite{furstenberg} shows that pointwise recurrence and $\ip$-sets are closely related: for a 
distal system, every point is $\ip^*$-recurrent.  Thus: 
\begin{proposition}
Every $\ip$-set is a set of pointwise recurrence for distal systems.
\end{proposition}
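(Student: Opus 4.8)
The plan is to read this off directly from the statement recalled just above the proposition, namely Furstenberg's theorem that every point of a distal system is $\ip^*$-recurrent. So I would not prove anything new about distal systems; the whole content is unwinding the definitions of $\ip$-set, $\ip^*$-set, $\ip^*$-recurrence, and pointwise recurrence.

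Concretely, first I would fix an $\ip$-set $R$, a distal system $(X,T)$, and a point $x\in X$. Since the definition of a set of pointwise recurrence quantifies over minimal systems, I would pass at the outset to the orbit closure $Y=\overline{\{T^nx\colon n\in\Z\}}$: the system $(Y,T)$ is again distal, and in a distal system every orbit closure is minimal (equivalently, every point is almost periodic), so $(Y,T)$ is a minimal distal system and $x\in Y$. Now fix $\ve>0$ and let $U$ be the open ball of radius $\ve$ about $x$ in $Y$. By Furstenberg's theorem applied to $(Y,T)$, the point $x$ is $\ip^*$-recurrent, so the return-time set $N(x,U)=\{n\in\N\colon T^nx\in U\}$ is an $\ip^*$-set. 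By the definition of an $\ip^*$-set it meets every $\ip$-set, and in particular $N(x,U)\cap R\neq\emptyset$; that is, there exists $n\in R$ with $d(T^nx,x)<\ve$. Since $\ve>0$ was arbitrary this gives $\inf_{n\in R}d(T^nx,x)=0$, which is exactly the assertion that $R$ is a set of pointwise recurrence for distal systems.

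I do not expect any real obstacle: the argument is a two-step implication once the cited $\ip^*$-recurrence of distal points is in hand, and the only step needing an explicit remark is the reduction to the minimal subsystem $Y$ (using that distality is inherited by subsystems and that distal systems are semisimple). If one wanted a self-contained proof instead, the hard part would of course be reproving Furstenberg's fact itself — that return times of points in a distal system are $\ip^*$ — for which I would use the Furstenberg structure theorem, running an IP-van der Waerden / Hindman-type induction up the tower of isometric extensions, or equivalently an argument with minimal idempotents in the Ellis semigroup; but since that statement is already available in the excerpt, the one-line deduction above is what I would present.
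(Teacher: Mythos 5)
Your proof is correct and is exactly the deduction the paper intends: the paper gives no separate proof, simply deriving the proposition ("Thus:") from Furstenberg's theorem that every point of a distal system is $\ip^*$-recurrent, which is precisely the unwinding of definitions you carry out. Your extra care in reducing to the minimal orbit closure is harmless (and in fact unnecessary here, since the paper's definition of pointwise recurrence already quantifies only over minimal systems).
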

\begin{question}
Is it true that every set of pointwise recurrence for distal systems is an $\ip$-set? 
\end{question}
We believe that there should be a counterexample.  

In a distal system, pointwise recurrence implies the multiple version: 
\begin{proposition}
\label{prop:mult-strong-distal}
A set of pointwise recurrence for distal systems is a set of pointwise multiple recurrence for distal systems.
\end{proposition}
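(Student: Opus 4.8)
The plan is to reduce the $\ell$-fold statement to the $1$-fold statement by working in the product system $X^\ell$ equipped with the transformation $T\times T^2\times\cdots\times T^\ell$, using only the fact that distality is preserved under taking powers, finite products, and subsystems.

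Concretely, fix a minimal distal system $(X,T)$, a point $x\in X$, and an integer $\ell\geq 1$. First I would observe that each $(X,T^j)$ is distal, since $\{T^{jn}x\colon n\in\N\}\subset\{T^mx\colon m\in\N\}$ forces $\inf_n d(T^{jn}x,T^{jn}y)\geq\inf_m d(T^mx,T^my)>0$ for $x\neq y$; and a finite product of distal systems is distal, because two distinct points of the product differ in some coordinate. Hence $S:=T\times T^2\times\cdots\times T^\ell$ is a distal homeomorphism of $X^\ell$. Next, let $Y$ be the $S$-orbit closure of the diagonal point $\bar x=(x,\dots,x)$. In a distal system every point is almost periodic (distal systems are disjoint unions of their minimal subsets; see~\cite{Aus}), so $(Y,S)$ is a minimal subsystem of $(X^\ell,S)$, and a subsystem of a distal system is obviously distal. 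Thus $(Y,S)$ is a minimal distal system containing $\bar x$.

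Now I would apply the hypothesis that $R$ is a set of pointwise recurrence for distal systems to $(Y,S)$ and the point $\bar x$, which gives $\inf_{n\in R}\rho(S^n\bar x,\bar x)=0$ for any metric $\rho$ compatible with the topology of $X^\ell$. Choosing $\rho$ to be the maximum of the coordinate distances, $\rho(S^n\bar x,\bar x)=\sup_{1\leq j\leq\ell}d(T^{jn}x,x)$, so the conclusion is exactly $\inf_{n\in R}\sup_{1\leq j\leq\ell}d(T^{jn}x,x)=0$, i.e. pointwise $\ell$-recurrence at $x$. Since $(X,T)$, $x$, and $\ell$ were arbitrary, $R$ is a set of pointwise $\ell$-recurrence for distal systems for every $\ell\geq 1$, hence a set of pointwise multiple recurrence for distal systems.

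The argument is essentially formal; there is no serious obstacle, only one point deserving care: one must know that the orbit closure of a point in a distal system is minimal, so that the hypothesis (which concerns minimal distal systems) can be applied to $(Y,S)$. This is classical, following for instance from the characterization of distality via the enveloping semigroup being a group, or from the Furstenberg structure theorem. If one instead adopts the convention that ``distal system'' need not be minimal, this step disappears and the hypothesis is applied directly to $(X^\ell,S)$.
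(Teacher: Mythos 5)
Your argument is correct and is essentially identical to the paper's proof: both pass to the product transformation $T\times T^2\times\cdots\times T^\ell$ on $X^\ell$, observe that the orbit closure of the diagonal point is a transitive distal (hence minimal) system, and apply the pointwise recurrence hypothesis there. Your write-up merely spells out the two facts the paper leaves implicit (products and subsystems of distal systems are distal; transitive distal systems are minimal), which is fine.
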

\begin{proof}
Let $\ell\geq 1$, $\wt T$ denote the transformation $T\times T^2\times\dots\times T^\ell$ of $X^\ell$, and $\wt X$ the closed orbit of the point $\wt x=(x,x,\dots,x)\in X^\ell$ under $\wt T^\ell$. Then $(\wt X,\wt T)$ is transitive and distal, and so it is minimal.  

Since $R$ is a set of pointwise recurrence, for every $\ve >0$ there exists $n\in R$ with
$\wt d(\wt T^n\wt x,\wt x)<\ve$, that is, $d(T^nx,x)<\ve$, \dots,
$d(T^{\ell n}x,x)<\ve$.
\end{proof}

More generally, the same result holds for an {\em almost distal system}, meaning a system in which every pair of points is either asymptotic (in this context, one sided asymptotic) or distal (see~\cite[Theorem 3.10]{BGKM} for more on almost distal systems).

It is easy to check that if $R$ is a set of pointwise recurrence for a distal minimal system, 
then we have a seemingly stronger property.
Let $(X_1, T_1), \ldots,$ $(X_k, T_k)$ be distal systems 
and for $1\leq j \leq k$, let $U_j$ be a nonempty, open 
subset of $X_j$.  Then there exist $n\in R$ such that 
$T_j^{-n}U_j\cap U_j\neq\emptyset$ for $j=1, \ldots, k$.  To see this, choose $x_j\in U_j$ for each $j$ and define $x = (x_1, \ldots, x_k)\in X_1\times\ldots\times X_k$ and let $T = T_1\times\ldots\times T_k$.  Proceeding as in the proof of Proposition~\ref{prop:mult-strong-distal}, we have the statement.

\section{Recurrence in nilsystems}
\label{sec:lift-Bohr}
\subsection{Nilsystems}

Let $G$ be a nilpotent Lie group. The {\em commutator} of $a,b\in G$ is defined to be $[a,b]= aba^{-1}b^{-1}$ and for $A, B\subset  G$, we let $[A,B]$ denote the group spanned by $\{[a,b]\colon a\in A, b\in B\}$. The {\em commutator subgroups} $G_j$ of $G$ are defined inductively, with $G_1 = G$ and for integers $j \geq 1$, we have $G_{j+1} = [G, G_j]$. For an integer $s\geq 1$,  if $G_{s+1} = \{1_G\}$ then 
$G$ is said to be {\em $s$-step nilpotent}.  

Let $s\geq 1$ be an integer, $G$ be an $s$-step nilpotent Lie group, and $\Gamma$ be a discrete cocompact subgroup of $G$. Then the compact nilmanifold $X=G/\Gamma$ is an {\em $s$-step nilmanifold}. Viewing elements of $X$ as points rather than congruence classes, we write $e_X$ for the image of the identity $1_G$ in $X$ and write $(g,x)\mapsto g\cdot x$ for the natural action of $G$ on $X$. 
Let $T\colon X\to X$ be the transformation $x\mapsto\tau\cdot x$ for some fixed element $\tau\in G$. Then $(X,T)$ is an {\em $s$-step nilsystem}.
Thus a $1$-step nilsystem is exactly  a translation  on a compact abelian group.

We note that  we do not assume that $G$ is connected, as this excludes some interesting examples, such as the \emph{affine nilsystems} defined in Section~\ref{sec:affine}.

In the next theorem, which is the main result of this section, we answer Question~\ref{q:katznelson} for the class of nilsystems:
\begin{theorem}
\label{th:rec-nil}
Let $R\subset \N$ be a set of Bohr recurrence. Then for every integer $s\geq 1$, $R$ is a set of recurrence for minimal $s$-step nilsystems.
\end{theorem}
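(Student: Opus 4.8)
The plan is to induct on the step $s$ of the nilsystem, using the structure of an $s$-step nilsystem as an extension of an $(s-1)$-step nilsystem by a compact abelian group (the center, or more precisely the last nontrivial commutator subgroup $G_s$ acting on fibers). The base case $s=1$ is immediate: a $1$-step nilsystem is a minimal translation on a compact abelian group, so recurrence is exactly the hypothesis that $R$ is a set of Bohr recurrence. For the inductive step, let $(X,T)$ with $X = G/\Gamma$ be a minimal $s$-step nilsystem, $T = \tau\cdot$. Let $Z = G_s\Gamma/\Gamma$, which is a subgroup isomorphic to a compact abelian group (a torus, up to finite pieces), contained in the center of the action, and the quotient $(Y,S) = (X/Z, \bar T)$ is a minimal $(s-1)$-step nilsystem. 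By the induction hypothesis, $R$ is a set of recurrence for $(Y,S)$.

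The heart of the argument is to lift recurrence from $Y$ to $X$ across this abelian group extension. Fix $\ve > 0$ and a point $x_0 \in X$; using characterization~\eqref{it:returnx3l} of Theorem~\ref{th:mult-top-recur} applied to $(Y,S)$, find a sequence $(n_i)$ in $R$ with $S^{n_i}\bar x_0 \to \bar x_0$ in $Y$, where $\bar x_0$ is the image of $x_0$. Lifting to $X$, the points $T^{n_i}x_0$ eventually lie in a small $Z$-tube over $x_0$, so (after passing to a subsequence and using compactness) we may write $T^{n_i}x_0 \to w \cdot x_0$ for some $w \in G_s$ close to... no — in general $w$ ranges over all of $Z$, and this is exactly where the obstruction lies. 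What we have gained is that $N_R(x_0, V) \neq \emptyset$ for every open $V$ containing the $Z$-orbit $Z\cdot x_0$; equivalently, the set $A = \{ n \in R : T^n x_0 \in (\text{small }Z\text{-tube over }x_0)\}$ is infinite, and along this set the "vertical displacement" $z_n \in Z$ defined by $T^n x_0 \approx z_n \cdot x_0$ is a well-defined sequence in the compact abelian group $Z$. I then want to find $n \in A$ with $z_n$ close to the identity in $Z$. The key point is that the map $n \mapsto z_n$ on $A$, or a suitable regularization of it, behaves like a polynomial (or generalized polynomial) sequence valued in $Z$ — this is the standard fact that nilsystems give rise to polynomial orbits — and so finding $z_n$ near the identity reduces to a Bohr-recurrence statement for a polynomial sequence in a torus. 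Since $R$ is a set of Bohr recurrence, and Bohr recurrence for polynomial sequences in tori can be bootstrapped from Bohr recurrence for linear sequences (via van der Corput / taking differences, or by passing to a larger compact abelian group on which the polynomial becomes a linear orbit), one concludes there exists $n \in R$ with $d(T^n x_0, x_0) < \ve$.

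I expect the main obstacle to be making rigorous the claim that the vertical cocycle $n \mapsto z_n$ (defined only along the thin set $A$, and only approximately) can be replaced by an honest polynomial $Z$-valued sequence defined on all of $\N$, so that the Bohr-recurrence hypothesis applies to it. One clean way to handle this: rather than working with the thin return set, enlarge the group. Observe that the orbit closure of $(x_0, x_0)$ under $T \times T$ restricted to... actually the cleanest route is: consider the element $\tau \in G$ and the orbit $n \mapsto \tau^n$; the nilmanifold structure shows the relevant return behavior of $x_0$ is governed by the image of $\tau^n \Gamma$ in $X$, and one can build a single (possibly higher-step, but still with abelian top) nilsystem or a group rotation capturing the vertical coordinate. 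The technically delicate part is keeping track of the fact that $\tau^n$ need not lie in the subgroup generating $Z$, so the vertical motion is genuinely polynomial and not linear, and that $Z$ itself need not be connected. I would handle the polynomial-to-linear reduction by the standard device: a polynomial sequence of degree $d$ in a compact abelian group $Z$ is the projection of a linear orbit on $Z^{d+1}$ (or on a larger compact abelian group built from $Z$), so Bohr recurrence for linear orbits — which is precisely the hypothesis on $R$ — yields Bohr recurrence for the polynomial sequence. Assembling these pieces, $R$ is a set of recurrence for $(X,T)$, completing the induction.
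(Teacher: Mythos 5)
Your induction scheme (quotient by $G_s$, then lift recurrence across the resulting abelian extension) matches the skeleton of the paper's topological proof, but the step where you handle the vertical displacement is exactly where the real difficulty lies, and your proposed resolution does not work. You want to find $n\in R$ in the return set $A$ with the vertical coordinate $z_n\in Z$ close to the identity, and you propose to do this by observing that $n\mapsto z_n$ is polynomial-like and that ``Bohr recurrence for polynomial sequences in tori can be bootstrapped from Bohr recurrence for linear sequences.'' That bootstrap is false. A degree-$d$ polynomial orbit in a torus is the projection of the orbit of a \emph{unipotent affine} transformation of $\T^{d}$ --- that is, of a $d$-step nilsystem orbit --- not of a group rotation on some larger compact abelian group, so the proposed reduction is circular: it assumes the very statement being proved for affine nilsystems. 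Worse, the statement you need is simply not true: the set $R=\{n\in\N\colon \norm{n^2\beta}>1/4\}$ (with $\beta$ irrational) is a set of Bohr recurrence --- indeed of measurable recurrence, as recalled in Example~\ref{example:FLW} --- yet the polynomial sequence $n\mapsto n^2\beta$ stays at distance at least $1/4$ from $0$ for every $n\in R$. Relatedly, your argument as written fixes an arbitrary $x_0$ and tries to produce $n\in R$ with $d(T^nx_0,x_0)<\ve$; that is pointwise recurrence, and Corollary~\ref{cor:Nxrecir} shows that a set of Bohr recurrence need not be a set of pointwise recurrence even for minimal $2$-step nilsystems, so no argument that keeps the base point fixed can succeed.

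The missing idea is that one should not try to make the vertical displacement small by choosing $n$ well; one should accept whatever displacement $v\in G_s$ occurs for a given large $n$ and cancel it by perturbing the base point. This is the content of Lemma~\ref{lem:densecommut}: the map $h\mapsto[h,\tau]$ sends a bounded subset of $G_{s-1}$ onto an $\ve$-dense subset of $G_s/(\Gamma\cap G_s)$, so one finds $h'\in G_{s-1}$ of bounded size with $[h',\tau]$ close to $v^{-1}\theta$ for some $\theta\in\Gamma\cap G_s$; since $G_s$ is central and $h\in G_{s-1}$, one has $[h\inv,\tau^n]=[h^{-n},\tau]$, so choosing $h$ with $h^{-n}=h'$ gives a correcting element of size $O(1/n)$, hence smaller than $\ve$ once $n\in R$ is large (and the induction hypothesis supplies arbitrarily large such $n$). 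The perturbed point $y=h\cdot x$ then witnesses $U\cap T^{-n}U\neq\emptyset$. Without this commutator mechanism (or the measure-theoretic spectral argument of Proposition~\ref{prop:spectral}, which requires $G$ connected or $s=2$), the induction cannot be closed.
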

It immediately follows that the result also holds for inverse limits of nilsystems, 
and it follows for proximal extensions of these systems by Proposition~\ref{prop:proximal}.  Particular examples are Sturmian or Toeplitz systems, but also more complicated constructions such as almost one to one extensions of infinite-step nilsystems (see \cite{DDMSY}). 

The rest of this section is devoted to two proofs of Theorem~\ref{th:rec-nil}. The first one uses measure theoretic arguments; it is shorter than the second one but unfortunately requires an additional hypothesis. The second proof is more technical and is completely topological, and has the possible advantage that 
it may be generalized.

We start by recalling some properties of nilsystems, referring to~\cite{AGH, parry, leibman} for background 
and further details.   Let  $(X=G/\Gamma,T)$ be an $s$-step nilsystem. 
Henceforth we assume that $(X,T)$ is minimal.  

Let $d_G$ denote a right invariant distance on the group $G$ that defines its topology, and assume that $X$ is endowed with the quotient distance, which we denote as  $d_X$.

For $j=1, \ldots, s$, we have that $G_j$ and $G_j\Gamma$ are closed subgroups of $G$.  
Let $G_0$ denote the connected component of $1_G$ in $G$. Then $G_0$ is an open, normal subgroup of $G$.  By the assumption of minimality, we can assume that $G=\langle G_0,\tau\rangle$ and we make this assumption in the sequel.  
This in turn implies that the commutator group $G_2$ is connected and included in $G_0$.

Set $Z=G/G_2\Gamma$. Then $Z$ is a compact abelian group. The natural projection  $X\to Z$ is a factor map of $(X,T)$ to $Z$, endowed with the translation by the image $\alpha$ of $\tau$ in $Z$, and 
the system $Z$ endowed with this translation is minimal.

\subsection{Measure theoretic proof of Theorem~\ref{th:rec-nil} under an additional assumption}

Maintaining the same notation, we continue to assume that the  $s$-step nilsystem $(X,T)$ is minimal.  Thus it is uniquely ergodic and its invariant measure is the \emph{Haar measure} $\mu$ of $X$. Let $m_Z$ denote the Haar measure of  $Z=G/G_2\Gamma$. Recall that $\alpha$ is the image of $\tau$ in $Z$ and let  $S$ be the translation by $\alpha$ on $Z$. Then $\pi\colon(X,\mu,T)\to(Z,m_Z,S)$ is a measure theoretic factor map, and more precisely $(Z,m_Z,S)$ is the Kronecker factor of $(X,\mu,T)$. 
We use additive notation in $Z$, and assume that this abelian group is endowed with a translation invariant distance $d_Z$ defining its topology.

The following result is proven for connected $G$ in~\cite{AGH},  using the theory of representations of nilpotent Lie groups. An elementary proof for the case of $2$-step nilsystems is given in~\cite{HKM1}.

\begin{theorem*}[\cite{AGH}, \cite{HKM1}]
If $G$ is connected or if $s=2$, then the spectral measure of any function $f\in L^2(\mu)$ with $\E(f\mid Z)=0$ is absolutely continuous.  
\end{theorem*}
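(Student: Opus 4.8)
The statement to prove is a spectral-gap-type assertion: for a minimal $s$-step nilsystem $(X = G/\Gamma, T)$ with Kronecker factor $Z = G/G_2\Gamma$, any $f \in L^2(\mu)$ with $\E(f \mid Z) = 0$ has absolutely continuous spectral measure. The plan is to reduce to the representation theory of nilpotent Lie groups, following the classical Auslander--Green--Hahn approach in the connected case, and to give a self-contained elementary argument in the $s = 2$ case.

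For the connected case, the strategy is as follows. First I would decompose $L^2(X, \mu) = L^2(G/\Gamma)$ into $G$-isotypic components using the theory of unitary representations of $G$ (here one uses that, for $G$ connected nilpotent and $\Gamma$ cocompact, $L^2(G/\Gamma)$ decomposes as a countable Hilbert-space direct sum of irreducibles, each occurring with finite multiplicity --- this is where connectedness and the nilmanifold structure enter). The key observation is that the functions with $\E(f\mid Z) = 0$ are precisely those lying in the sum of the irreducible subrepresentations on which the center $Z(G)$ — or more precisely the subgroup $G_2$ — acts nontrivially. By the Stone--von Neumann / Kirillov theory, such an irreducible $\rho$ is induced from a character of a proper subgroup, and restricting $\rho$ to the one-parameter subgroup generated by $\tau$ (or rather analyzing the action of $T = $ translation by $\tau$), the matrix coefficients $\langle \rho(\tau^n) v, w\rangle$ decay and, more to the point, the spectral measure of $n \mapsto \langle \rho(\tau^n)v, w\rangle$ is absolutely continuous because $\rho|_{\langle\tau\rangle}$ (or its relevant piece) is unitarily equivalent to a translation action whose spectral type, via the disintegration of the regular representation implicit in the induction, has no atoms away from the trivial piece. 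Summing over the countably many isotypic components — using that absolute continuity is preserved under countable direct sums and that the $L^2$-norm controls the total mass — gives the conclusion for general $f$ with $\E(f\mid Z)=0$. I would cite \cite{AGH} for the details of this representation-theoretic machinery rather than reproduce it.

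For the $s = 2$ case, the plan is to give the concrete computation from \cite{HKM1}. Here $G_2$ is central, $X = G/\Gamma$ fibers over $Z = G/G_2\Gamma$ with fibers that are tori (orbits of $G_2$), and one can expand $f$ in Fourier modes along these central fibers: $f = \sum_{\chi} f_\chi$ where $\chi$ ranges over characters of the fiber group and $f_0 = \E(f\mid Z)$. For $\chi \neq 0$, one computes directly $\langle T^n f_\chi, f_\chi\rangle$ and finds it equals an exponential-sum-type expression — essentially $\int_Z e^{2\pi i n \phi_\chi(z)} \, d\text{(something)}(z)$ where $\phi_\chi$ is an affine-type function on $Z$ arising from the commutator pairing — whose Fourier transform in $n$ is given by pushing forward a smooth (Haar) measure under a submersion, hence absolutely continuous. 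The minimality hypothesis is used to guarantee that the relevant frequency $\phi_\chi$ is genuinely non-degenerate (the orbit closure is all of the relevant subtorus), so no atoms appear. I would then invoke the spectral theorem to pass from the matrix coefficients to the spectral measure of $f_\chi$ itself, and sum over $\chi$.

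**Main obstacle.** The hard part is the connected case: making precise why the matrix coefficients of $\rho(\tau^n)$ for a nontrivial irreducible $\rho$ have absolutely continuous spectral measure, which genuinely requires the Kirillov orbit method (or at least Mackey's theory of induced representations) to see that $\rho$ restricted to the cyclic group generated by $\tau$ looks like a direct integral of translations with continuous spectrum. This is not something to prove from scratch here, and the honest move — which matches the structure of the excerpt, where the theorem is attributed to \cite{AGH} and \cite{HKM1} — is to quote it. So the "proof" I would write is really a two-paragraph explanation: one paragraph reducing to isotypic components and reducing the claim to a statement about a single nontrivial irreducible, citing \cite{AGH} for the representation-theoretic input in the connected case; and one paragraph carrying out the explicit central-fiber Fourier computation for $s = 2$, citing \cite{HKM1}. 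The secondary subtlety is bookkeeping: verifying that absolute continuity survives the countable direct sum, which is routine given that $\sum_\chi \|f_\chi\|^2 = \|f\|^2 < \infty$ bounds the total variation of the summed spectral measure.
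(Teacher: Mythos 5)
Your proposal is correct and matches the paper's treatment: the paper states this theorem without proof, attributing the connected case to the representation-theoretic decomposition of $L^2(G/\Gamma)$ in \cite{AGH} and the $s=2$ case to the elementary Fourier expansion along the central fibers in \cite{HKM1}, which is exactly the structure of your sketch. The only steps you carry out yourself --- identifying $\{f\colon \E(f\mid Z)=0\}$ with the sum of the nontrivial isotypic components and noting that absolute continuity survives a countable orthogonal sum because $\sum_\chi \norm{f_\chi}^2 = \norm{f}^2 < \infty$ --- are routine and correct.
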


This means that if $\E(f\mid Z)=0$, then the finite measure $\sigma_f$ on $\T$ defined by
$$
\wh\sigma_f(n)=\int T^nf\cdot \overline f\,d\mu\text{ for }n\in\Z
$$ is absolutely continuous with respect to Lebesgue measure on $\T$. This implies in particular that $\wh\sigma_f(n)\to 0$ when $n\to+\infty$.

\begin{proposition}
\label{prop:spectral}
Let $(X=G/\Gamma,T)$ be a minimal $s$-step nilsystem.
The statement of Theorem~\ref{th:rec-nil} holds if $G$ is connected, as well as for  $s=2$ without any further assumptions.
\end{proposition}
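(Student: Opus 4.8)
The plan is to exploit the spectral characterization of recurrence. Let $(X = G/\Gamma, T)$ be a minimal $s$-step nilsystem with $G$ connected (or $s = 2$), let $\mu$ be its Haar measure, and let $\pi \colon (X,\mu,T) \to (Z, m_Z, S)$ be the projection onto the Kronecker factor $Z = G/G_2\Gamma$, which is a minimal translation by $\alpha$ on a compact abelian group. Since $R$ is a set of Bohr recurrence, there is a sequence $(n_i)$ in $R$ with $S^{n_i} z \to z$ for every $z \in Z$ (equivalently $n_i \alpha \to 0$ in $Z$); indeed by definition Bohr recurrence gives this for the finite-dimensional character data defining $Z$, hence for all of $Z$. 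The goal is to produce, for a given nonempty open $U \subset X$, some $n \in R$ with $\mu(U \cap T^{-n} U) > 0$, which immediately gives $U \cap T^{-n} U \neq \emptyset$.

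The key step is to analyze the correlation sequence $c(n) := \mu(U \cap T^{-n} U) = \int \one_U \cdot T^n \one_U \, d\mu$ along the sequence $(n_i)$. Write $f = \one_U$ and decompose $f = \E(f \mid Z) + g$ where $\E(g \mid Z) = 0$. Then
\begin{align*}
\int f \cdot T^n f \, d\mu &= \int \E(f\mid Z)\cdot T^n \E(f\mid Z)\, d\mu + \int g \cdot T^n g\, d\mu \\
&= \int_Z h \cdot S^n h \, dm_Z + \wh\sigma_g(n),
\end{align*}
where $h = \E(f \mid Z)$, viewed as a function on $Z$, and I have used that the cross terms vanish (the factor map and the conditional expectation). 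By the quoted theorem of Auslander--Green--Hahn (and Host--Kra--Maass in the case $s=2$), the spectral measure $\sigma_g$ is absolutely continuous, so by the Riemann--Lebesgue lemma $\wh\sigma_g(n) \to 0$ as $n \to \infty$; in particular $\wh\sigma_g(n_i) \to 0$. Meanwhile, by continuity of the unitary action of $Z$ on $L^2(m_Z)$ and the fact that $S^{n_i} z \to z$, we get $S^{n_i} h \to h$ in $L^2(m_Z)$, so $\int_Z h \cdot S^{n_i} h \, dm_Z \to \int_Z h^2 \, dm_Z = \|h\|_2^2$. Since $\|h\|_1 = \|f\|_1 = \mu(U) > 0$, we have $\|h\|_2^2 > 0$. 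Therefore $\liminf_i \int f \cdot T^{n_i} f \, d\mu \geq \|h\|_2^2 > 0$, so for large $i$ the value $c(n_i) = \mu(U \cap T^{-n_i}U)$ is positive, giving $n = n_i \in R$ with $U \cap T^{-n}U \neq \emptyset$. This proves $R$ is a set of recurrence for such nilsystems.

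I expect the main obstacle to be purely bookkeeping rather than conceptual: verifying that Bohr recurrence along the character data of $Z$ really does force $S^{n_i} z \to z$ uniformly (equivalently $S^{n_i} \to \mathrm{Id}$ in the group of translations), and confirming the orthogonality of the two pieces in the decomposition so that the correlation splits cleanly as above. One small point worth stating carefully: $Z$ need not be a finite-dimensional torus, but as a monothetic compact abelian group generated by $\alpha$ its dual is a countable group of characters; a set of Bohr recurrence meets every \bohrzero set, and in particular for each finite set of characters $\chi_1, \dots, \chi_k$ of $Z$ and each $\ve > 0$ we can find $n \in R$ with $\norm{\chi_j(\alpha) n} < \ve$ for all $j$, and a diagonal argument over an exhausting sequence of finite character sets yields the desired sequence $(n_i)$. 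Everything else — absolute continuity feeding Riemann--Lebesgue, and strong continuity of the $Z$-action — is standard and quoted directly from the stated results.
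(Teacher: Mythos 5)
Your proposal is correct and follows essentially the same route as the paper: decompose $\one_U$ into its conditional expectation on the Kronecker factor $Z=G/G_2\Gamma$ plus an orthogonal part, kill the correlation of the orthogonal part via the Auslander--Green--Hahn / Host--Kra--Maass absolute continuity of the spectral measure, and use Bohr recurrence to make $n\alpha$ small in $Z$ so that the Kronecker part of the correlation is close to $\norm{\E(\one_U\mid Z)}_{L^2}^2>0$. The only differences are cosmetic (you pass to a sequence $(n_i)$ and take a liminf where the paper fixes $\ve=\mu(U)^2/4$ and chooses a single large $n$), so no further comment is needed.
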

\begin{proof}
Let $R\subset \N$ be a set of Bohr recurrence and let $U$ be a nonempty, open subset of $X$. We want to show that there exists $n\in R$ with $U\cap T^{-n}U\neq \emptyset$. It suffices to show that this set has positive measure. 

By minimality of $(X,T)$, the Haar measure $\mu$ has full topological support and thus $\mu(U)>0$.
Set $\ve=\mu(U)^2/4$.
Define 
$$
g=\E(\one_U|Z)\text{ and }f=\one_U-g
$$
and note  that $1\geq \norm g_{L^2(m_Z)}\geq\norm g_{L^1(m_Z)}=\mu(U)$.
We have $\E(f\mid Z)=0$ and thus there exists $n_0\in\N$ such that 
$$
\Bigl|\int T^nf \cdot f\,d\mu\Bigr|<\ve\text{ for every }n\geq n_0.
$$
 On the other hand, writing $g_t(z)=g(z+t)$ for $t\in Z$, there exists $\delta>0$ such that $\norm{g_t-g}_{L^2(m_Z)}<\ve$ for every $t \in Z$ with 
 $d_Z(t,0)<\delta$. 
 In particular, 
 $$
 \norm{S^ng-g}_{L^2(m_Z)}<\ve\text{ for every $n$ such that }d_Z(n\alpha,0)<\delta.
 $$ 
 Since $R$ is a set of Bohr recurrence, there exists $n\in R$ with $n\geq n_0$ and   $d_Z(n\alpha,0)<\delta$. For this value of $n$, since $\E(T^n\one_U|Z)=S^ng$, we have
\begin{align*}
\mu(U\cap T^{-n}U)
= & \int f\cdot T^nf\,d\mu+\int g\cdot S^ng\,dm_Z\\
\geq & -\ve+ \norm g_{L^2(m_Z)}^2-\norm g_{L^2(m_Z)}\norm{S^ng-g}_{L^2(m_Z)}\\
\geq & \mu(U)^2-2\ve\geq\mu(U)^2/2>0.\qed
\end{align*}
\renewcommand{\qed}{}
\end{proof}

\subsection{Topological proof of Theorem~\ref{th:rec-nil}}
We start by recalling some further facts about nilsystems (again, \cite{AGH, parry, leibman} are sources for background).

Let $(X,T)$ be a minimal $s$-step nilsystem, where T is the translation by $\tau\in G$. Recall that we assume that $G=\langle G_0,\tau\rangle$.
 If needed, we can represent this system  as a quotient $G/\Gamma$ where $G_0$ is simply connected and thus we can  assume this property without loss of generality. Set $\Gamma_0=\Gamma \cap G_0$. In this case, $G_0$ can be endowed with a \emph{Mal'cev basis}. Using this basis, we can identify $G_0/G_2$ with $\R^p$ for some integer $p\in\N$, such that the subgroup $\Gamma_0/(\Gamma_0\cap G_2)$ corresponds to $\Z^p$, and thus  $G_0/G_2\Gamma_0$ is identified with $\T^p$. Furthermore, the abelian group $G_s$ can be identified with  $\R^r$ for some $r\in\N$, 
 such that $\Gamma\cap G_s$ corresponds to $\Z^r$, inducing the identification of $G_s/(\Gamma\cap G_s)$ and $\T^r$. 
 Finally, $G_{s-1}/G_s$ is an abelian group, and is nontrivial  if $X$ is not an $(s-2)$-step nilsystem. In this case, this group can be identified   with $\R^q$ for some $q\in\N$, and such that the subgroup $(\Gamma\cap G_{s-1})/(\Gamma\cap G_{s})$ corresponds to $\Z^q$.

Moreover, the distance $d_G$ on $G$ can be chosen such  that these identifications are isometries when the quotient groups are endowed with the quotient distances and
$\R^p$, $\T^p$, $\R^r$, $\T^r$ and  $\R^q$ are endowed with the Euclidean distances.
We caution the reader that under this identification, groups such as $\T^p$ and $\R^r$ are written with 
additive notation, while groups such as $G_0/G_j\Gamma$ and $G_s$ are written with multiplicative notation.  

Assume now  that $s\geq 2$. Define
\begin{equation}
\label{eq:Xtilde}
\wt G:=G/G_s, \ \wt\Gamma:=\Gamma/(\Gamma\cap G_s), \ \text{ and } \wt X:= \wt G/\wt\Gamma.
\end{equation}
Then 
$\wt G$ is an 
$(s-1)$-step nilpotent group, $\wt\Gamma$ is a discrete cocompact subgroup, $\wt X$ is an 
$(s-1)$-step nilmanifold, and the quotient map $G\to\wt G$ induces a projection $\pi\colon X\to \wt X$.  Thus
 we can view $\wt X$ as the quotient of $X$ under the action of $G_s$. Let $\wt\tau$ be the image of $\tau$ in $\wt G$ and $\wt T$ be the translation by $\wt\tau$ on $\wt X$. Then $(\wt X,\wt\tau)$ is an $(s-1)$-step nilsystem and $\pi\colon X\to\wt X$ is a factor map.

Maintaining this notation: 
\begin{lemma}
\label{lem:densecommut}
Let $(X,T)$ be a minimal $s$-step nilsystem and assume that $X$ is connected and that $G_0$ is simply connected. Then for every $\ve>0$, there exists $C:=C(\ve)$ such that for every $w\in G_s$, there exist $h\in G_{s-1}$ and 
$\gamma\in \Gamma\cap G_s$ with
$$
d_G(h,1_G)<C\ ; \ d_G([h,\tau],w\gamma)<\ve.
$$
\end{lemma}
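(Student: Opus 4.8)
The goal is to show that the map $h \mapsto [h,\tau]$ from $G_{s-1}$ to $G_s$ is "surjective modulo $\Gamma \cap G_s$, with bounded preimage". The key structural fact is minimality: the orbit closure of $e_X$ under $T$ is all of $X$, and in particular the action of $G_s$ on $X$ together with the group generated by $\tau$ must fill up the relevant fibers. The plan is to reduce the statement to a statement on the abelianized groups $G_{s-1}/G_s \cong \T^q$ and $G_s \cong \R^r$ (using the Mal'cev identifications set up just before the lemma), where the commutator induces a continuous homomorphism-like map, and then to exploit minimality on the quotient $\wt X = X/G_s$.

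First I would set up the relevant map. The commutator $[h,\tau] = h\tau h^{-1}\tau^{-1}$ depends, for $h \in G_{s-1}$, only on the class of $h$ in $G_{s-1}/G_s$, because $[G_s,\tau] \subset [G,G_s] = G_{s+1} = \{1_G\}$ (using $s$-step nilpotence and that $G_s$ is central). So we obtain a well-defined map $\phi\colon G_{s-1}/G_s \to G_s$, $\phi(\bar h) = [h,\tau]$, and since $G_s$ is central and $s$-step nilpotence gives bilinearity of the top commutator, $\phi$ is in fact a continuous group homomorphism from the abelian group $G_{s-1}/G_s \cong \T^q$ to $G_s \cong \R^r$. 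Now a continuous homomorphism from a torus $\T^q$ to a vector group $\R^r$ must be trivial — so $\phi \equiv 1_G$?! That cannot be right if the lemma is to have content, so the resolution must be that one works not with $G_{s-1}/G_s$ but genuinely with $G_{s-1}$: the map $h \mapsto [h,\tau]$ from $G_{s-1}$ (not its quotient) to $G_s$, composed with the quotient $G_s \to G_s/(\Gamma\cap G_s) \cong \T^r$, is what is "large". I would therefore work with $\psi\colon G_{s-1} \to G_s/(\Gamma \cap G_s) \cong \T^r$ and argue that its image is dense (equivalently, a subtorus that must be everything) using minimality.

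The main step is the density/surjectivity argument via minimality. Consider the nilsystem $(X,T)$ and its factor $(\wt X, \wt T)$ with $\wt X = X/G_s$. The fiber of $\pi\colon X \to \wt X$ over $e_{\wt X}$ is a torus, a coset of $G_s/(\Gamma\cap G_s) \cong \T^r$. By a standard fact about minimal nilsystems (found in~\cite{AGH, leibman}), $(X,T)$ minimal forces the "vertical torus" $G_s/(\Gamma \cap G_s)$ to be acted on transitively by an appropriate subgroup; more concretely, the group generated by elements of the form $[h,\tau]\gamma$ with $h$ ranging over $G_{s-1}$ and $\gamma \in \Gamma \cap G_s$ must be dense in $G_s$ — otherwise there would be a proper closed $T$-invariant subset of $X$, contradicting minimality. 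I would make this precise by: (i) letting $H$ be the closure in $G_s$ of $\{[h,\tau] : h \in G_{s-1}\}\cdot(\Gamma\cap G_s)$; (ii) checking $H$ is a closed subgroup of $G_s$ normalized appropriately, so $G_s/H$ is a quotient torus; (iii) showing that if $H \ne G_s$ then the image of $X$ in the further quotient $G/(H\Gamma)$ would exhibit $X$ as a non-minimal — actually the cleaner route is that $H$ being $\tau$-invariant and the connectedness of $G_s$ force, via minimality of $X$ over $\wt X$ combined with connectedness, $H = G_s$. Once $H = G_s$: the set $\{[h,\tau]\gamma : h\in G_{s-1},\ \gamma\in\Gamma\cap G_s\}$ is dense in $G_s$, so for every $w \in G_s$ and every $\ve > 0$ there exist $h, \gamma$ with $d_G([h,\tau], w\gamma) < \ve$.

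Finally, the uniform bound $C = C(\ve)$ on $d_G(h, 1_G)$. Density gives the existence of $h$ but a priori not of bounded size. Here I would use compactness: the set of $w\Gamma \cap G_s$-type targets can be taken in a fundamental domain of $G_s$ modulo $\Gamma \cap G_s$, which is compact (a copy of $\T^r$). For a fixed $\ve$, by density the open sets $\{w : d_G([h,\tau], w\gamma) < \ve \text{ for some } h \in B_G(1_G, C), \gamma \in \Gamma\cap G_s\}$, indexed by $C \in \N$, form an increasing open cover of this compact set, so finitely many — hence one — suffice; that $C$ is the desired constant. (If one first observes $[h,\tau]$ depends only on $h$ modulo $G_s$, one can even restrict $h$ to a bounded fundamental domain for $G_{s-1}$ modulo $G_s$ and absorb the remaining $G_s$-ambiguity into the $\gamma$ and the $\ve$, giving the bound directly — but I would present the compactness argument as the robust version.)

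The main obstacle I anticipate is step three: cleanly extracting from minimality of $(X,T)$ the statement that $\{[h,\tau] : h \in G_{s-1}\}$ together with $\Gamma \cap G_s$ is dense in the connected group $G_s$. This is a "folklore" consequence of the structure theory of minimal nilsystems, and the delicate point is handling the interplay between the lattice $\Gamma$, the normal subgroup $G_s$, and the generator $\tau$ — in particular verifying that the relevant orbit closure really is all of $G_s$ and not merely a positive-codimension subtorus, which is exactly where connectedness of $G_s$ (established earlier in the section) and the assumption $G = \langle G_0, \tau\rangle$ get used.
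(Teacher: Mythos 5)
You have the right skeleton: reduce the lemma to the claim that $h\mapsto[h,\tau]$ maps $G_{s-1}$ to a dense subset of $G_s/(\Gamma\cap G_s)$, then extract the uniform constant $C(\ve)$ by an exhaustion/compactness argument; this matches the paper's structure, and your observation that $h\mapsto[h,\tau]$ is a homomorphism on $G_{s-1}$ with central values is correct and useful. But the heart of the lemma is exactly the density claim, and that is the one step you do not prove: you defer it to ``folklore'' and sketch a route --- if the closure $H$ of $[G_{s-1},\tau]\cdot(\Gamma\cap G_s)$ were a proper closed subgroup of $G_s$, then $X$ would fail to be minimal --- which does not work as stated. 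For a connected nilmanifold with $G=\langle G_0,\tau\rangle$, minimality is detected entirely on the maximal torus factor $Z=G/G_2\Gamma$ (the Auslander--Green--Hahn/Parry criterion), so a proper closed subgroup $H\subset G_s$ containing $\Gamma\cap G_s$ and $[G_{s-1},\tau]$ does not by itself produce a proper closed invariant subset of $X$: the quotient $G/H\Gamma$ is a factor of the minimal system $X$, hence automatically minimal, and no contradiction appears. What forces $H=G_s$ is Diophantine, not soft dynamics, and a purely topological-dynamical argument cannot supply that information.

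The paper's proof supplies it as follows. Write $\tau=\tau_0\gamma_0$ with $\tau_0\in G_0$ and $\gamma_0\in\Gamma$, so $[h,\tau]=[h,\tau_0][h,\gamma_0]$. In Mal'cev coordinates the maps $x\mapsto[x,\gamma_0]$ and the commutator pairing $G_{s-1}/G_s\times G_0/G_2\to G_s$ have \emph{integer} structure constants (commutators of lattice elements lie in the lattice), while the coordinates $\beta_k$ of the projection of $\tau_0$ to $G_0/G_2\cong\R^p$ are rationally independent --- this is where minimality of the base rotation enters. If the image of $h\mapsto[h,\tau]$ in $G_s/(\Gamma\cap G_s)\cong\T^r$ lay in a proper closed subgroup, a nontrivial integer character $\lambda$ would annihilate it; expanding $\lambda([x,\tau])$ as a linear form on $G_{s-1}/G_s\cong\R^q$ with coefficients $F_j+\sum_k\Phi_{j,k}\beta_k$ ($F$, $\Phi$ integral) and using that a linear form with values in $\Z$ vanishes identically, rational independence forces all $\Phi_{j,k}=0$ and then all $F_j=0$; hence $\lambda$ kills both $[G_0,G_{s-1}]$ and $[\gamma_0,G_{s-1}]$, hence all of $[G,G_{s-1}]=G_s$, a contradiction. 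Without this interplay between the integrality coming from $\Gamma$ and the irrationality coming from minimality, the density claim is unproven, so your proposal has a genuine gap at its central step. (A minor point: $G_{s-1}/G_s$ is identified with $\R^q$, not $\T^q$, so the puzzle in your first paragraph about homomorphisms from a torus into a vector group does not arise.)
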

\begin{proof}
Since $X$ is connected, it follows that $G=\langle G_0,\Gamma\rangle$ and there exists $\tau_0\in G_0$ and $\gamma_0\in\Gamma$ such that $\tau=\tau_0\gamma_0$. (If $G$ is connected, we have $\tau_0=\tau$ and $\gamma_0=1_G$.)  

Recall that $\Gamma_0=\Gamma\cap G_0$. Since  $G=\langle G_0,\tau\rangle$, we have that $G=\langle G_0,\gamma_0\rangle$ and thus $\Gamma=\langle\Gamma_0,\gamma_0\rangle$. 

Recall also that  $Z:=G/(G_2\Gamma)=G_0/(G_2\Gamma_0)=\T^p$, and that 
 the image $\alpha$ of $\tau$ in $G/(G_2\Gamma)$ is an ergodic element.
Let $\beta$ be the projection of $\tau_0$ to $G_0/G_2=\R^p$. Then the projection of $\beta$ in $G_0/(G_2\Gamma_0)$ is equal to the projection $\alpha$ of $\tau$ in $G/(G_2\Gamma)$. It follows that the coordinates $(\beta_1,\dots,\beta_p)$ of $\beta$ are rationally independent.

Let $\pi_s\colon G_s\to G_s/(\Gamma\cap G_s)$ be the quotient map. We claim: 
\begin{claim*}
The map
$f\colon h\mapsto\pi_s([h,\tau])$ takes  $ G_{s-1}$ to a dense subset of $G_s/(\Gamma\cap G_s)$.
\end{claim*}
Assuming the claim, there exists $C>0$ such that the image under $f$ of the ball $B_G(1_G,C)\cap G_{s-1}$ is $\ve$-dense in $G_s/(\Gamma\cap G_s)$,  and this is the statement of the lemma.

To prove the claim, note that 
the map $g\mapsto [g,\gamma_0]$ induces a group homomorphism $F\colon G_{s-1}/G_s\to G_s$. Using additive notation and writing in coordinates,
$$
\text{for }1\leq i\leq r,\quad \bigl(F(x)\bigr)_i=\sum_{j=1}^q F_{i,j}x_j
$$
and, since $[G_{s-1}\cap\Gamma,\gamma_0]\subset G_s\cap \Gamma$, $F$ maps $(G_{s-1}\cap\Gamma)/(G_s\cap\Gamma)$ to $G_s \cap \Gamma$, we have that the coefficients $F_{i,j}$ are integers.

The commutator map $ G_{s-1}\times G_0\to G_s$ induces a homomorphism $\Phi\colon G_{s-1}/G_s\times G_0/G_2\to G_s$.  Using additive notation and writing in coordinates,
$$
\text{for }1\leq i\leq r,\quad \big(\Phi(x,y)\bigr)_i=
\sum_{j=1}^q\sum_{k=1}^p\Phi_{i,j,k} x_jy_k.
$$
Since the commutator map takes $(G_{s-1}\cap \Gamma)\times \Gamma_0$ to $G_s\cap\Gamma$, 
it follows that the coefficients  $\Phi_{i,j,k}$ are integers.  

We remark that for $g\in G_{s-1}$, we have that $[g,\tau]=[g,\tau_0].[g,\gamma_0]$. The commutator map $g\mapsto[g,\tau]\colon G_{s-1}\to G_s$ induces a homomorphism $\Psi\colon G_{s-1}/G_s\to G_s$, with (using multiplicative notation) $\Psi(x)=\Phi(x,\tau)F(x)$.
In coordinates  (using additive notation), 
$$
\text{for }1\leq i\leq r,\quad\big(\Psi(x)\bigr)_i=\sum_{j=1}^q\bigr(F_{i,j}+\sum_{k=1}^p \Phi_{i,j,k}\beta_k\bigr)x_j.
$$

Let $\pi\colon G_s \mapsto G_s/(\Gamma \cap G_s)=\T^r$. We have that $f(G_{s-1})$ is the range of $\pi\circ\Psi$.  If this range is not dense in $\T^r$, then it is included in a proper subtorus, and there exist  integers $\lambda_1,\dots,\lambda_r$, not all equal to $0$, such that 
the range of $\Psi$ is included in the group $H$ defined by
$$
z\in H\text{ if and only if }\sum_{i=1}^r\lambda_iz_i\in\Z.
$$
In coordinates,
$$
\text{for every }x\in \R^q,\quad \sum_{i=1}^r\lambda_i\sum_{j=1}^q \bigr(F_{i,j}+\sum_{k=1}^p \Phi_{i,j,k}\beta_k\bigr)x_j\in\Z
$$
and thus
\begin{equation}
\label{eq:sousgroupe}
\text{for }1\leq j\leq q,\quad 
\sum_{i=1}^r\lambda_i\bigr(F_{i,j}+\sum_{k=1}^p \Phi_{i,j,k}\beta_k\bigr)=0.
\end{equation}
Since the coefficients $F_{i,j}$ are integers,
$$
\text{for }1\leq j\leq q,\quad 
\sum_{k=1}^p\bigl(
\sum_{i=1}^r\lambda_i \Phi_{i,j,k}\bigr)\beta_k\in\Z.
$$
Since the coordinates $\beta_k$ of $\beta$ are rationally independent, it follows that
\begin{equation}
\label{eq:sousgroupe2}
\text{for }1\leq j\leq q\text{ and } 1\leq k\leq p,\quad
\sum_{i=1}^r\lambda_i \Phi_{i,j,k}=0.
\end{equation}

This means that the range of $\Phi$ is included in  the proper closed subgroup $H$ of $G_s=\R^r$, and thus $[G_0,G_{s-1}]\subset H$.

Furthermore, plugging~\eqref{eq:sousgroupe2} into~\eqref{eq:sousgroupe}, we have that 
$$
\text{for }1\leq j\leq q,\quad \sum_{i=1}^r \lambda_iF_{i,j}=0. 
$$
This means that the range of $F$ is included in $H$, that is, $[\gamma_0,G_{s-1}]\subset H$.

As $G=\langle \Gamma_0, G_0\rangle$ and for every $x\in G_{s-1}$ the map $g\mapsto [g,x]$ is a group homomorphism, then 
$[G,G_{s-1}]=[G_0,G_{s-1}].[\gamma_0,G_{s-1}]$ and $[G,G_{s-1}]\subset H$, a contradiction.
\end{proof}

We use this lemma to complete the topological proof:
\begin{proof}[Proof of  Theorem~\ref{th:rec-nil}]
We proceed by induction on $s$. If $s=1$, there is nothing to prove. Henceforth we assume that $s\geq 2$ and that the statement holds for $(s-1)$-step nilsystems.  
 Let  $R$ be a set of Bohr recurrence and
let $(X=G/\Gamma,T)$ be a minimal $s$-step nilsystem that is not an $(s-1)$-step nilsystem;  we maintain the  notation used in Lemma~\ref{lem:densecommut}.

Let $X_0$ denote the connected component of $e_X$ in $X$. Then there exists $k\in\N$ such that $T^kX_0=X_0$, and the system $(X_0,T^k)$ is a minimal $s$-step nilsystem. On the other hand, the set $R_0=\{n\in\N\colon kn\in R\}$ is a set of Bohr recurrence. Substituting $X_0$ for $X$ and $R_0$ for $R$, we reduce to the case that $X$ is connected.  We can assume without loss that $G_0$ is simply connected.  

Let $U$ be a nonempty open subset of $X$; we want to show that there exists $n\in R$ such that $U\cap T^{-n}U\neq \emptyset$.  Without loss, we can assume that $U$ is the open ball $B(e_X,3\ve)$ centered at $e_X$ and of radius $3\ve$ for some $\ve>0$.

Let $\pi\colon X\to\wt X$ be the factor map defined just after~\eqref{eq:Xtilde}. Since $(\wt X,\wt T)$ is an $(s-1)$-step nilsystem, it follows from the induction hypothesis that there exist arbitrarily large $n\in R$ with $\pi\inv\bigl(B(e_X,\ve)\bigr)\cap \wt T^{-n}\pi\inv\bigl(B(e_X,\ve)\bigr)\neq \emptyset$.  It follows that for these values of $n$, there exist $x\in X$ and $v\in G_s$ with
$d_X(x,e_X)<\ve$ and $d_X(T^nx,v\cdot e_X)<\ve$. Lifting $x$ to $G$, we obtain $g\in G$ and $\gamma\in\Gamma$ with
$$
d_G(g,1_G)<\ve\text{ and }d_G(\tau^ng,v\gamma)<\ve.
$$
We claim that it  suffices to show that if $n$ is sufficiently large, there exists $h\in G_{s-1}$  and $\theta\in G_s\cap\Gamma$ such that  
\begin{equation}
\label{eq:htaun}
d_G(h,1_G)<\ve\text{ and }d_G([h\inv,\tau^n], v\inv\theta)<\ve.
\end{equation}
To see this, writing $y=h\cdot x$, we have that $y$ is the projection of $hg$ in $X$ and that
$$d_X(y,e_X)\leq d_G(hg,1_G)\leq d_G(h,1_G)+d_G(g,1_G)<2\ve.$$
Furthermore, 
\begin{multline*}
d_X(T^ny,e_X)\leq d_G(\tau^n hg, \theta\gamma)=d_G(h[h\inv,\tau^n]\tau^ng,\theta\gamma)\\
\leq \ve+d_G([h\inv,\tau^n]\tau^ng,\theta\gamma)
=\ve+  d_G(\tau^ng[h\inv,\tau^n],\theta\gamma)\\
\leq
2\ve+d_G(v\gamma [h\inv,\tau^n],\theta\gamma)
=2\ve+d_G([h\inv,\tau^n]v\gamma,\theta\gamma)\\
=2\ve+d_G([h\inv,\tau^n], v\inv\theta)<3\ve, 
\end{multline*}
where we used the right invariance of the distance $d_G$,  the fact that $[h\inv,\tau^n]\in G_s$, and that $G_s$ is included in the center of $G$. This proves the claim.

We are left with finding $h\in G_{s-1}$  and $\theta\in G_s$ satisfying~\eqref{eq:htaun}. Let $C$ be as in Lemma~\ref{lem:densecommut} applied with $\varepsilon$ and $v^{-1}$.
There exist $h'\in G_{s-1}$ and $\theta\in G_s\cap\Gamma$ such that $d_G(h',1_G)<C$ and $d_G([h',\tau],v^{-1}\theta)<\ve$. Since $G_s$ is isomorphic to $\R^r$, there exists $h\in G_s$ with $h^{-n}=h'$ and $d_G(h,1_G)\leq d_G(h',1_G)/n<C/n$. If $n\in R$ is larger that $C/\ve$, we have  that $d_G(h,1_G)<\ve$. Since $h\in G_{s-1}$, we have
$[h\inv,\tau^n]=[h\inv,\tau]^n=
[h^{-n},\tau]=[h',\tau]$ and $h$ satisfies the announced properties.
\end{proof}

In this proof, we actually showed that  for every small open subset $U\subset X$ and all sufficiently large $n$, $T^{-n}U$ almost contains a fiber of the projection $\pi\colon X\to \tilde{X}$.  This leads to a natural question: is there a way to formulate such a dilation property that can be used to prove the analog of Theorem~\ref{th:rec-nil} for more general systems? 

\section{Multiple recurrence in nilsystems}
\label{sec:mmultiple}

\subsection{\nildbohrzero sets}

For multiple recurrence in nilsystems, \nildbohrzero sets, introduced in~\cite{NilBohr}, play the role played by \bohrzero sets for recurrence in compact abelian groups; the complex exponentials are replaced by \emph{nilsequences} or by generalized polynomials.  In this section we adapt results of Huang, Song, and Ye~\cite{HSY} for our purposes.

\begin{definition}[see \cite{NilBohr}]
Let $s\geq 1$ be an integer.
The set $E \subset \N$ is a  \nildbohrzero set if there exist an $s$-step nilsystem $(X,T)$, $x_0\in X$, and an open neighborhood $U\subset  X$ of $x_0$ such that 
$$
\{n\in\N\colon T^nx_0\in U\} \subset  E.
$$
Note that in this definition we can restrict without loss to the case that $(X,T)$ is minimal.

A set $R\subset \N$ is a  \nildbohrzeroo set if it has nonempty intersection with all \nilbohrzero s sets.  
\end{definition}

\begin{theorem}[Huang, Song, and Ye~\mbox{\cite[Theorem A]{HSY}}]
\label{th:HSYA}
Let $s\in\N$. If $E\subset \N$  is a \nildbohrzero set, then there exist a minimal $s$-step nilsystem $(X,T)$ and a nonempty open set $U\subset  X$ such that 
$E\supset N^s(U)$.
\end{theorem}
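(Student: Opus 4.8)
The plan is to begin from a witness realizing $E$ as a \nildbohrzero set, say $E\supseteq\{n\in\N\colon T_0^nx_0\in U_0\}$ with $(X_0=G/\Gamma,T_0)$ a minimal $s$-step nilsystem and $T_0x=\tau x$, and to manufacture from it a second minimal $s$-step nilsystem $(X,T)$ together with a nonempty open set $U\subset X$ for which a length-$(s+1)$ return event already forces the single-orbit event, that is,
\[
N^s_T(U)\subseteq\{n\in\N\colon T_0^nx_0\in U_0\}\subseteq E .
\]
First I would normalize: replacing $\tau$ by $g_0^{-1}\tau g_0$ and $U_0$ by $g_0^{-1}U_0$ for a lift $g_0\in G$ of $x_0$, we may assume $x_0=e_{X_0}$, so that the target set becomes $\{n\colon\tau^n\Gamma\in U_0\}$ with $U_0$ a neighbourhood of $e_{X_0}$.

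The construction of $(X,T)$ is the heart of the matter. The idea is to ``unfold'' the linear orbit $n\mapsto\tau^n\Gamma$ along the arithmetic progression $0,n,2n,\dots,sn$ into a higher-dimensional $s$-step nilsystem: one works inside a minimal sub-nilsystem of a nilsystem built from $G$ and $\tau$ — for instance the orbit closure $X\subseteq X_0^s$ of the base point $(e_{X_0},\dots,e_{X_0})$ under the translation $T$ by $(\tau,\tau^2,\dots,\tau^s)$, which is again a minimal $s$-step nilsystem — possibly after first enlarging $X_0$ so that the combinatorial ``curvature'' of the orbit is recorded faithfully (equivalently one may first replace the lower bound for $E$ by a description in terms of generalized polynomials of degree $\le s$, in the sense of \cite{NilBohr}, and build $X$ directly from such a polynomial). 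The set $U$ is then taken to be a small ball of radius $\delta$ around the base point, with $\delta$ fixed at the end.

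For the verification, let $n\in N^s_T(U)$, so there is $\underline x\in X$ with $T^{kn}\underline x\in U$ for $k=0,1,\dots,s$. Writing $\underline x$ in group coordinates and unwinding the definition of $T$, these $s+1$ inclusions say that $\tau^{j(m+kn)}\Gamma$ lies within $O(\delta)$ of $e_{X_0}$ for all $j\in\{1,\dots,s\}$ and $k\in\{0,\dots,s\}$, where $m$ parametrises $\underline x$. Applying the $s$-th finite difference in $k$ (coefficients $(-1)^{s-k}\binom{s}{k}$) kills — because $G$ is $s$-step nilpotent — every contribution of order $\le s-1$ in the free parameters, in particular the whole dependence on $m$, and leaves exactly a constraint of the shape ``$\tau^n\Gamma$ is within $O(\delta)$ of $e_{X_0}$''; choosing $\delta$ small enough compared with the geometry of $U_0$ gives $\tau^n\Gamma\in U_0$, the required inclusion. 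If the nilsystem produced above is not minimal one simply replaces it by a minimal subsystem, since the computation is local around the base point and minimal subsystems of nilsystems are again $s$-step nilsystems.

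The main obstacle is the bookkeeping in this telescoping step. The slogan ``the $s$-th difference annihilates everything of order $<s$'' is transparent at the level of the Lie algebra, but passing to the quotient by the lattice $\Gamma$ introduces rounding (bracket) corrections, and one must arrange $X$ and $U$ so that the term surviving the telescoping is genuinely the datum $\tau^n\Gamma\in U_0$ rather than a strictly weaker, e.g.\ parity-insensitive, consequence of it — this is precisely why the naive orbit closure may need to be enriched. Controlling these corrections, and checking that $N^s_T(U)$ is nonempty so that the statement is not vacuous (which follows from minimality and distality of nilsystems, as in the proof of Proposition~\ref{prop:mult-strong-distal}), is where essentially all the work lies.
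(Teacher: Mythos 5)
First, a point of comparison: the paper does not prove this statement. It is quoted from Huang, Shao, and Ye \cite{HSY}, where the proof occupies a substantial part of their memoir and proceeds by first characterizing $\mathrm{Nil}_s$-$\mathrm{Bohr}_0$ sets in terms of special generalized polynomials (their Theorem B) and then building $(X,T)$ and $U$ explicitly from those polynomials. So your proposal must stand on its own, and as written it has a genuine gap precisely at the step you flag as ``where essentially all the work lies.''

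The gap is that the telescoping computation at the heart of your verification does not go through for the system you construct. For the orbit closure $X\subset X_0^s$ of the diagonal base point under $T=\tau\times\tau^2\times\dots\times\tau^s$, the $s+1$ conditions $T^{kn}\underline x\in U$ do not take the form you assert: a general point of $X$ is a limit of diagonal orbit points and is not parametrized by an integer $m$, and even for a genuine orbit point the $j$-th coordinate condition reads $\tau^{j(m+kn)}g_j\Gamma\in B_\delta$, whose exponent is \emph{linear} in $k$. Hence the $s$-th finite difference in $k$ annihilates everything, including the term you want to survive, rather than isolating $\tau^n\Gamma$. Concretely, for fixed $g$ the sequence $k\mapsto\tau^{kn}g$ has constant first discrete derivative $a_{k+1}a_k^{-1}=\tau^n$ and therefore trivial higher differences; the degree-$s$ behaviour in $n$ is carried entirely by the lattice elements $\gamma_k$ needed to bring $\tau^{kn}g$ back near $1_G$, i.e.\ by the integer-part (bracket) corrections. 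Moreover, knowing $\tau^{kn}g=u_k\gamma_k$ with $u_k$ and $g$ small does not place $\tau^{kn}\Gamma$ near $e_{X_0}$, since $\tau^{kn}\Gamma=u_k(\gamma_k g^{-1}\gamma_k^{-1})\Gamma$ and conjugation of a small element by a large lattice element in a nilpotent group can be large. Controlling exactly these corrections is the content of the generalized-polynomial analysis in \cite{HSY}; your sketch correctly names this difficulty but does not resolve it, and the specific construction offered would need to be replaced --- not merely ``enriched'' --- for the argument to yield $\tau^nx_0\in U_0$.
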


We use this to show: 
\begin{corollary}
\label{cor:mmultiple-rec-nil}
Let $s\in \N$. For $R\subset \N$, the following are equivalent:
\begin{enumerate}
\item
\label{it:d-recurr-d-step}
$R$ is a set of $s$-recurrence for minimal $s$-step nilsystems;
\item
\label{it:strong-recur-d-step}
$R$ is a set of pointwise recurrence for minimal $s$-step nilsystems;
\item
$R$ is   a \nildbohrzeroo set.
\end{enumerate}
\end{corollary}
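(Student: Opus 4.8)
The plan is to prove Corollary~\ref{cor:mmultiple-rec-nil} by establishing a cycle of implications, say \eqref{it:d-recurr-d-step} $\Rightarrow$ \eqref{it:strong-recur-d-step} $\Rightarrow$ (iii) $\Rightarrow$ \eqref{it:d-recurr-d-step}, with the last implication being where the substance lies (it is essentially a repackaging of Theorem~\ref{th:HSYA}).

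First I would record the easy implications. For (iii) $\Rightarrow$ \eqref{it:d-recurr-d-step}: suppose $R$ is a \nildbohrzeroo set and let $(X,T)$ be a minimal $s$-step nilsystem with $U\subset X$ nonempty open. We must find $n\in R$ with $U\cap T^{-n}U\cap\dots\cap T^{-sn}U\neq\emptyset$, i.e. $n\in N^s(U)$. Shrinking $U$ if necessary, we may assume it is a small ball around some point $x_0$; then the return time set $\{n\in\N\colon T^nx_0\in U\}\subset N^{s}(U')$ for a slightly larger $U'$? — no, more directly: $N^s(U)$ is not obviously a \nildbohrzero set, so instead I would use Theorem~\ref{th:HSYA} in the contrapositive form. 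Actually the clean route is: $R$ is \nildbohrzeroo $\Leftrightarrow$ $R$ meets every \nildbohrzero set; and by Theorem~\ref{th:HSYA}, if $E$ is \nildbohrzero then $E\supset N^s(U)$ for some minimal $s$-step nilsystem and nonempty open $U$. So if $R$ were \emph{not} a set of $s$-recurrence for $s$-step nilsystems, there would be a minimal $s$-step nilsystem $(X,T)$ and nonempty open $U$ with $R\cap N^s(U)=\emptyset$; but $N^s(U)$ contains a return-time set $\{n\colon T^nx_0\in U\}$ for $x_0\in U$, which is a \nildbohrzero set, contradicting that $R$ meets every such set. Hence (iii) $\Rightarrow$ \eqref{it:d-recurr-d-step}. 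Wait — I should double-check: is $\{n\colon T^nx_0\in U\}\subseteq N^s(U)$? No, that inclusion is false in general. The correct statement is that the \nildbohrzero set to test against is $N^s(U)$ itself, so one should argue: given that $R$ is \nildbohrzeroo, and given a minimal $s$-step nilsystem $(X,T)$ with nonempty open $U$, it suffices to exhibit that $N^{s}(U)$ \emph{contains} a \nildbohrzero set. This follows by considering the point $\tilde x_0=(x_0,\dots,x_0)$ in the orbit closure inside $X^{s+1}$ under $T\times T^2\times\cdots\times T^{s}$ — this orbit closure is itself a nilsystem (a sub-nilsystem of $X^{s+1}$, hence $s$-step), and the return times of $\tilde x_0$ to the open set $U\times\cdots\times U$ equal exactly $N^s(U)\cap(\text{something})$; taking the neighborhood appropriately shows $N^s(U)$ contains a \nildbohrzero set. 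Then $R$ meets it, giving $n\in R\cap N^s(U)$.

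Next, \eqref{it:strong-recur-d-step} $\Rightarrow$ (iii): if $R$ is a set of pointwise recurrence for minimal $s$-step nilsystems, let $E$ be an arbitrary \nildbohrzero set, witnessed by a minimal $s$-step nilsystem $(X,T)$, $x_0\in X$, and open $U\ni x_0$ with $\{n\colon T^nx_0\in U\}\subset E$. Since nilsystems are distal, every point is recurrent, and pointwise recurrence of $R$ gives $\inf_{n\in R}d(T^nx_0,x_0)=0$; in particular some $n\in R$ has $T^nx_0\in U$, so $n\in E$, proving $R\cap E\neq\emptyset$. For \eqref{it:d-recurr-d-step} $\Rightarrow$ \eqref{it:strong-recur-d-step}: here I would use that a minimal $s$-step nilsystem is distal together with the Lemma~\ref{lem:proxi}/Proposition-type machinery — actually more simply, in a minimal distal system $s$-recurrence at a dense $G_\delta$ upgrades to pointwise recurrence. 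The argument: let $(X,T)$ be a minimal $s$-step nilsystem and $x\in X$; we want $\inf_{n\in R}d(T^nx,x)=0$. By \eqref{it:returnx2l} of Theorem~\ref{th:mult-top-recur} applied to this system (which is in the class), there is a dense $G_\delta$ set $X_0$ of points $y$ with $\inf_{n\in R}\sup_{1\le j\le s}d(T^{jn}y,y)=0$; pick $y\in X_0$ near $x$. Using minimality, translate so that some power brings $y$ close to $x$, and distality to control the whole orbit segment — this is the standard trick (appearing in Proposition~\ref{prop:proximal}'s proof) of transferring recurrence of one point to recurrence of any point in a minimal distal system. Concretely: pick a sequence $m_i$ with $T^{m_i}y\to x$; given $n\in R$ witnessing $\sup_j d(T^{jn}y,y)<\delta$, apply $T^{m_i}$: continuity gives $d(T^{m_i+jn}y,\,T^{m_i}y)$ small, hence $T^{m_i+jn}y$ near $x$; but we want $T^{n}x$ near $x$, so instead one takes a net $T^{m_i}y\to x$ and uses that along a subnet $T^{n}$ acts on the limit — cleaner to just invoke that the closed orbit of $(x,x,\dots,x)$ under $T\times\cdots\times T^s$ is minimal (distal + transitive) and apply \eqref{it:returnxl} there directly, which is exactly what Proposition~\ref{prop:mult-strong-distal} does.

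I expect the main obstacle to be the bookkeeping in (iii) $\Rightarrow$ \eqref{it:d-recurr-d-step}: one must carefully verify that $N^s(U)$ for a minimal $s$-step nilsystem contains a \nildbohrzero set, i.e. arrange the diagonal point and its orbit-closure nilsystem and the product neighborhood so that the containment $\{n\colon \tilde T^n\tilde x_0\in \tilde U\}\subseteq N^s(U)$ holds — the point being that $\tilde X=\overline{\{\tilde T^n\tilde x_0\}}$ is genuinely an $s$-step nilsystem (a subsystem of $X^{s+1}$) so that this return-time set legitimately certifies a \nildbohrzero set. Everything else is either a direct application of Theorem~\ref{th:HSYA}, of distality of nilsystems, or of the equivalences in Theorem~\ref{th:mult-top-recur}. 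Note also the mild subtlety that the nilsystem in the definition of \nildbohrzero set need not be minimal, but one may pass to the minimal subsystem containing the relevant orbit closure without loss, as already remarked in the definition.
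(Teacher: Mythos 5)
Your arrows (ii) $\Rightarrow$ (iii) and (iii) $\Rightarrow$ (i) are, after your self-correction, essentially right and coincide with the paper's soft steps: (ii) $\Leftrightarrow$ (iii) is in effect the definition of a \nildbohrzeroo set, and (iii) $\Rightarrow$ (i) holds because for $x_0\in U$ the set $\{n\colon T^{jn}x_0\in U \text{ for }1\leq j\leq s\}\subset N^s(U)$ is the return-time set of the diagonal point under $T\times T^2\times\dots\times T^s$ on $X^s$, which is an $s$-step nilsystem (and since the definition of a \nildbohrzero set does not require minimality, even the passage to the orbit closure is optional). The genuine gap is your arrow (i) $\Rightarrow$ (ii). From $s$-recurrence, characterization~\eqref{it:returnx2l} of Theorem~\ref{th:mult-top-recur} only yields a dense $G_\delta$ of recurrent points, and no soft distality argument upgrades this to every point: translating a good point $y$ close to $x$ by $T^{m_i}$ produces recurrence of points \emph{near} $x$, never of $x$ itself, and Proposition~\ref{prop:mult-strong-distal} cannot be invoked because its hypothesis is pointwise recurrence at the specific diagonal point --- exactly what you are trying to establish, whereas characterization~\eqref{it:returnxl} applied to the orbit closure only returns \emph{some} recurrent point there. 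The paper's own example $R=\{n\in\N\colon\norm{n^2\alpha}\geq 1/4\}$ with $\alpha$ irrational (cf.\ Example~\ref{example:FLW} and Corollary~\ref{cor:Nxrecir}) shows your mechanism is false: this is a set of $1$-recurrence for minimal $2$-step nilsystems, so $1$-recurrence holds on a dense $G_\delta$ of every such system, yet it fails pointwise recurrence at a point of a minimal $2$-step affine nilsystem. Your argument never uses that the order of recurrence equals the step of the nilsystem, so it would ``prove'' this false statement as well.

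The tell-tale symptom is that Theorem~\ref{th:HSYA} does no work in your final proof: you invoke it in your first attempt at (iii) $\Rightarrow$ (i), correctly see that attempt fails, and replace it with the diagonal argument, after which it is never used. But the implication (i) $\Rightarrow$ (iii) is essentially a restatement of Theorem~\ref{th:HSYA} (its contrapositive produces, from a \nildbohrzero set $E$, a minimal $s$-step nilsystem and open $U$ with $N^s(U)\subset E$), so no proof of the corollary can bypass it. The paper's route is: (i) $\Rightarrow$ (iii) by Theorem~\ref{th:HSYA}; (iii) $\Leftrightarrow$ (ii) by definition; (ii) $\Rightarrow$ (i) by the distal diagonal argument of Proposition~\ref{prop:mult-strong-distal}. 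You should replace your (i) $\Rightarrow$ (ii) by the composite (i) $\Rightarrow$ (iii) $\Rightarrow$ (ii), with the first step carried by Theorem~\ref{th:HSYA}.
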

If $R$ satisfies any of these three equivalent conditions, then $R$ is actually a set of multiple pointwise  recurrence for  minimal $s$-step nilsystems.  Moreover, in this case, properties~\eqref{it:d-recurr-d-step} and~\eqref{it:strong-recur-d-step} remain valid for non-minimal $s$-step nilsystems, as the closed orbit of any point is  a minimal $s$-step nilsystem.
\begin{proof}
By Theorem~\ref{th:HSYA}, every set of $s$-recurrence for minimal $s$-step nilsystems is a \nildbohrzeroo set. 

By definition, \nildbohrzeroo sets are exactly sets of pointwise recurrence for minimal $s$-step  nilsystems.

Since every minimal $s$-step nilsystem is distal, it follows from the proof of Proposition~\ref{prop:mult-strong-distal} that a set of pointwise  recurrence for this class of systems  is also a set of multiple pointwise  recurrence for these systems and this implies~\eqref{it:d-recurr-d-step}.
 \end{proof}

We summarize what this means.  Let $s,\ell\geq 1$ be integers and let $R\subset  \N$. 
If $s\leq\ell$, the set $R$ is a set of $\ell$-recurrence for (minimal) $s$-step nilsystems if and only if it is a set of $s$-recurrence for (minimal) $s$-step nilsystems if and only if it is a \nildbohrzeroo  set.

However, we do not know what happens for $s>\ell$, other than for $\ell=1$:  if $R$ is a set of Bohr recurrence then it is a set of recurrence for all minimal nilsystems (Theorem~\ref{th:rec-nil}).  
As Bohr recurrence is equivalent to multiple Bohr recurrence, the multiple 
analog of Katznelson's question (Question~\ref{q:katznelson}) is easily seen to be false.  
However, we conjecture: 
 
\begin{conjecture}
\label{conj:multiple}
Let $s\geq 1$ and let $R$ be a set of $s$-recurrence for $s$-step nilsystems.  
Then $R$ is a set of  $s$-recurrence for all $t$-step nilsystems for any $t\geq s$.  
\end{conjecture}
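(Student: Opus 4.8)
One can recast this conjecture in a more structural form. By Corollary~\ref{cor:mmultiple-rec-nil}, the hypothesis that $R$ is a set of $s$-recurrence for $s$-step nilsystems says precisely that $R$ is a \nildbohrzeroo set, and the conjecture then becomes equivalent to the assertion that for every minimal $t$-step nilsystem $(X,T)$ with $t\geq s$ and every nonempty open $U\subseteq X$, the set $N^s_T(U)$ \emph{contains} a \nildbohrzero set. (If $N^s_T(U)$ contained none, then $\N\setminus N^s_T(U)$ would be a \nildbohrzeroo set disjoint from $N^s_T(U)$, contradicting the conjecture; the converse is immediate.) The plan is to prove this containment by induction on $t$, paralleling the topological proof of Theorem~\ref{th:rec-nil}; the base case $t=s$ is Corollary~\ref{cor:mmultiple-rec-nil} itself.

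For the inductive step, let $(X=G/\Gamma,T)$ be a minimal $t$-step nilsystem with $t>s$, where $T$ is the translation by $\tau$, and take $U=B(e_X,\eta)$ with $\eta$ small. As in the proof of Theorem~\ref{th:rec-nil}, one reduces to the case that $X$ is connected and $G_0$ is simply connected, replacing $R$ by $\{m\in\N\colon km\in R\}$ --- which is again \nildbohrzeroo, as one checks using the existence of $k$-th roots in the connected group $G_0$. Let $\pi\colon X\to\widetilde X:=X/G_t$ be the factor map onto a $(t-1)$-step nilsystem; since $t-1\geq s$, the inductive hypothesis applies to $\widetilde X$. Exactly as in the first part of the proof of Theorem~\ref{th:rec-nil}, applying $s$-recurrence for $\widetilde X$ to the open set $\pi(U)$, one obtains for arbitrarily large $n\in R$ a point $x$ near $e_X$ and elements $v_1,\dots,v_s$ of the central subgroup $G_t$ of $G$ with $T^{jn}x$ close to $v_j\cdot e_X$ for $j=1,\dots,s$ (with all discrepancies as small as one likes by shrinking $\eta$). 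Correcting $x$ by an element $h\in G_{t-1}$, and using that $m\mapsto[h\inv,\tau^m]$ is a homomorphism $\Z\to G_t$ (since $[G_t,G]=\{1_G\}$), one computes $T^{jn}(h\cdot x)\approx h\,[h\inv,\tau^n]^{\,j}\,v_j\cdot e_X$ modulo $\Gamma\cap G_t$, so it suffices to find $h$ with $d_G(h,1_G)<\eta$ satisfying $[h\inv,\tau^n]^{\,j}\equiv v_j\inv$ in $G_t/(\Gamma\cap G_t)$ for all $j=1,\dots,s$ simultaneously. For $s=1$ this is a single congruence, solved --- exactly as in the proof of Theorem~\ref{th:rec-nil} --- by Lemma~\ref{lem:densecommut} together with extracting an $n$-th root in $G_t\cong\R^r$ and taking $n$ large.

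The obstacle, and the reason the conjecture is open, is that for $s\geq 2$ this is a system of $s$ congruences on the single element $w:=[h\inv,\tau^n]\in G_t$, forcing the ``defect profile'' $(v_1,\dots,v_s)$ to lie within $\eta$ of the one-parameter family $\{(w,w^{2},\dots,w^{s})\colon w\in G_t\}$, which an arbitrary lift of an $s$-fold near-return in $\widetilde X$ has no reason to do. The natural way to try to overcome this is to replace the single near-return by a ``parallelepiped'': pass to the Hall--Petresco orbit nilsystem $(W,\sigma)$, where $W\subseteq X^{s+1}$ is the closed orbit of $(e_X,\dots,e_X)$ under $\sigma:=\id\times T\times T^2\times\dots\times T^s$, so that $N^s_T(U)$ contains the return-time set $N_\sigma\bigl((e_X,\dots,e_X),\,W\cap U^{s+1}\bigr)$, and then run an induction on the step of $W$, quotienting at each stage by the top commutator subgroup and invoking the dilation phenomenon from the proof of Theorem~\ref{th:rec-nil} (for large $n$, $\sigma^{-n}(U^{s+1})$ almost contains a fibre of the quotient map). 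The hard part is that the top commutator of $W$ sits very diagonally inside $G_t^{\,s+1}$ --- in coordinates it is essentially the same one-parameter family $\{(w,w^{2},\dots,w^{s})\}$ as above --- so to cover a fibre one is again forced to move continuous parameters that simultaneously perturb the lower-step data; disentangling this coupling is the crux, and it becomes feasible exactly when the underlying generalized-polynomial structure is explicit, which is why the affine case is accessible (Theorem~\ref{th:lift-affine}).

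A parallel measure-theoretic route meets the analogous difficulty. For a minimal $t$-step nilsystem $X=G/\Gamma$ the degree-$s$ Host--Kra factor is the $s$-step nilsystem $G/(G_{s+1}\Gamma)$, so the average $\frac1N\sum_{n\leq N}\mu(U\cap T^{-n}U\cap\dots\cap T^{-sn}U)$ is controlled by that factor; but concluding recurrence would require the error term to be small for the particular large $n$ supplied by recurrence on the $s$-step factor, not merely on average. For $s=1$ this individual decay is precisely the absolute continuity of the spectral measure used in Proposition~\ref{prop:spectral}, whereas for $s\geq 2$ the relevant correlation is a nilsequence coming from the step-$t$ system $W$ and need not decay. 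Overcoming this --- or, equivalently, the diagonality obstruction above --- is the decisive point; a natural first target is the connected $2$-step case, then general connected nilsystems, leaving the disconnected case for last.
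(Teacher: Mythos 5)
This statement is posed in the paper as a conjecture and is not proved there: the paper establishes only the case $s=1$ (Theorem~\ref{th:rec-nil}) and the affine case (Theorem~\ref{th:lift-affine}), and the authors explicitly say they do not know how to handle a general nilsystem. Your proposal, appropriately, does not claim a proof either, so there is no hidden gap to flag relative to the paper --- but neither is there a proof, and you are right that none is currently available. What you do supply is sound. The reformulation via Corollary~\ref{cor:mmultiple-rec-nil} --- that the conjecture is equivalent to the assertion that $N^s_T(U)$ contains a \nildbohrzero set for every minimal $t$-step nilsystem and nonempty open $U$ --- is a correct duality argument and is a sharper statement of the problem than anything in the paper. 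Your diagnosis of the obstruction is also accurate: in the topological proof of Theorem~\ref{th:rec-nil} the correction is a single $h\in G_{t-1}$, and since $[h\inv,\tau^{jn}]=[h\inv,\tau^n]^j$ for such $h$, one is forced to solve $w^j\approx v_j\inv$ for $j=1,\dots,s$ with one element $w\in G_t$, which overdetermines the system when $s\geq 2$. The affine proof of Theorem~\ref{th:lift-affine} escapes precisely because the correcting element $y$ is taken in the larger group $G_{s-r}$ and the resulting system $(M^n-\id)^k y\approx -v_k$ is triangular and explicitly solvable via Lemma~\ref{lem:A}; your observation that the general case would require an analogous ``decoupled'' family of corrections is exactly the point at which the paper's program stalls. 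The two routes you sketch (Hall--Petresco parallelepipeds, and the measure-theoretic route through the degree-$s$ factor) are plausible but programmatic, and the measure-theoretic one founders where you say it does, since for $s\geq 2$ the relevant correlation is a genuine $t$-step nilsequence with no individual decay. In short: your write-up is an honest and technically correct account of why the conjecture is open, consistent with --- and more detailed than --- the paper's own remarks.
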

For $s = 1$, this is the content of Theorem~\ref{th:rec-nil}.  For $s > 1$, 
the conjecture is supported by explicit computations in the affine case: a set of $s$-recurrence 
for affine $s$-step systems is also a set of $s$-recurrence for any $t$-step affine system with $t\geq s$, 
and this is carried out in Section~\ref{sec:affine}.  However, we do not know how to carry out these computations for a general nilsystem, but believe that some analog of the 
topological proof of Theorem~\ref{th:rec-nil} should be possible.

\subsection{Multiple recurrence and regionally proximal relations}
\label{sec:mult-RP}

Let $s\geq 1$ be an integer. The regionally proximal relation $\RPs(X,T)$ introduced in~\cite{HKM} for minimal systems $(X,T)$ generalizes the regionally proximal relation of Auslander~\cite{Aus}. In~\cite{HKM} we showed that  the relation  $\RPs(X,T)$ is the identity if and only if the system is a system of order $s$, meaning it is an inverse limit of $s$-step nilsystems; assuming in addition that the system is distal, this relation is an equivalence relation and the quotient is the maximal factor of order $s$ of $X$. The assumption of distality was removed in~\cite{SY}.

Many results of this section are implicit or explicit in the work of Huang, Shao, and Ye~\cite{HSY}. We extract,  rephrase, and adapt them here for completeness and our purposes, so as to give a framework for constructing explicit examples of  sets of recurrence.

\begin{notation}
We write $E(X,T)$ for the Ellis semigroup of  the system $(X,T)$.
\end{notation}

The next lemma appears as a comment in~\cite[page 71]{Aus}, but also can be deduced from the more general Theorem 15 in Chapter 7 in the same book.

\begin{lemma}
\label{lem:minimal-orbits}
Let $(X,T)$ be a minimal system and $(Z,S)$ be a distal system. Then each closed $(T\times S)$-orbit in $X\times Z$ is minimal.
\end{lemma}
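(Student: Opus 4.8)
The plan is to use the theory of the Ellis semigroup together with minimality of $(X,T)$ and distality of $(Z,S)$. First I would fix a closed $(T\times S)$-invariant subset $M\subset X\times Z$ and, by a Zorn's lemma argument, assume it is minimal among such sets; the goal is to show $M$ is all of the orbit closure we started with, i.e.\ that the closed orbit of any $(x,z)$ is minimal. Equivalently, I would show that if $M$ is a minimal subset and $(x_0,z_0)$ is an arbitrary point of $X\times Z$, then whenever $(x_0,z_0)$ lies in the orbit closure of a point of $M$, it in fact lies in $M$; the cleanest route is to show that for every $x\in X$ the fiber $M_x=\{z\in Z: (x,z)\in M\}$ is nonempty, because then projecting $M$ to $X$ gives a closed invariant set, hence all of $X$ by minimality, and one argues each fiber is a single point.

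The key steps, in order: (1) Let $M$ be a minimal closed $(T\times S)$-invariant subset of $X\times Z$. The projection of $M$ to $X$ is closed and $T$-invariant, hence equals $X$ by minimality of $(X,T)$, so every fiber $M_x$ is nonempty. (2) Pick $p$ in the Ellis semigroup $E(X,T)$ with $px_0=x_0$ for a chosen base point; more usefully, work in $E(X\times Z, T\times S)$, which maps onto both $E(X,T)$ and $E(Z,S)$. Since $(Z,S)$ is distal, $E(Z,S)$ is a group of homeomorphisms, so every element of $E(Z,S)$ is injective on $Z$. (3) Given $(x,z), (x,z')\in M$ with the same first coordinate, I want $z=z'$. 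Choose a minimal idempotent $u$ in $E(X\times Z,T\times S)$ lying in a minimal left ideal $L$; then $u(x,z)$ and $u(x,z')$ still have equal first coordinate, and by minimality of $M$ one can move $(x,z)$ to any prescribed point of $M$. (4) The crucial point: the image of $u$ in $E(Z,S)$ is the identity map on $Z$ (an idempotent in a group), so $u$ acts as the identity on the $Z$-coordinate; combined with the fact that elements of $E(Z,S)$ are bijections, the fiber over any $x$ in $M$ has exactly one point. Hence $M$ is the graph of a continuous map, and in particular the orbit closure of every point $(x,z)$, being contained in such a graph once we pass to the minimal subset it generates, is forced to be minimal — more precisely, every point of $X\times Z$ has its orbit closure equal to some such minimal $M$, because the $X$-coordinate already sweeps out all of $X$ minimally and distality rigidifies the $Z$-coordinate.

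Actually the simplest packaging is: let $N$ be the orbit closure of an arbitrary $(x_0,z_0)$, and let $M\subseteq N$ be a minimal subset. By step (1), the $X$-projection of $M$ is $X$, so there is $z_1$ with $(x_0,z_1)\in M$. Now I claim $(x_0,z_0)\in M$, which gives $N=M$ minimal. To see this, take $p\in E(X\times Z, T\times S)$ with $p(x_0,z_1)=(x_0,z_0)$; such $p$ exists since $(x_0,z_0)\in N=\overline{\mathcal O(x_0,z_0)}$ and $(x_0,z_1)$ lies in a minimal set inside $N$, so by standard Ellis-semigroup facts $(x_0,z_0)$ is in the orbit-closure-image of $(x_0,z_1)$ under $E$. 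The $X$-component of $p$ fixes $x_0$; the $Z$-component $q$ of $p$ satisfies $q z_1 = z_0$. But distality of $(Z,S)$ means $E(Z,S)$ is a group, so $q$ is invertible, and choosing $p$ inside a minimal left ideal we may take $q$ to be an element of the group $uE(Z,S)u$ for a minimal idempotent $u$ — one then shows $q$ can be taken to fix $z_1$, forcing $z_0=z_1$ and $(x_0,z_0)=(x_0,z_1)\in M$.

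The main obstacle I expect is making step (3)/(4) precise: correctly invoking the structure of minimal left ideals and idempotents in $E(X\times Z, T\times S)$ and verifying that the $Z$-coordinate action of the relevant idempotent is trivial, using only that $(Z,S)$ is distal (hence $E(Z,S)$ is a group) while $(X,T)$ is merely minimal. One must be careful that $E(X\times Z)$ need not be a group, so the argument has to stay on the $Z$-side where distality does its work; the cited Theorem~15 of Chapter~7 of~\cite{Aus} is presumably exactly the general statement packaging this, and I would either quote it or reproduce this Ellis-semigroup argument in the three or four lines above.
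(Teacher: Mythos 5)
Your overall strategy (Ellis semigroup, with distality of $Z$ forcing idempotents to act trivially on the $Z$-coordinate) contains the right key observation, but two of your steps are genuinely wrong as written. First, the claim in steps (3)--(4) that the fiber $M_x$ of a minimal set $M\subseteq X\times Z$ over a point of $X$ is a singleton, so that $M$ is a graph, is false: take $X=Z=\T$ with $Tx=x+\alpha$, $Sz=z+\beta$ and $1,\alpha,\beta$ rationally independent. Then $X\times Z$ is itself minimal under $T\times S$ and every fiber is all of $Z$, while both factors are minimal rotations, hence distal, so this sits squarely inside the hypotheses of the lemma. The fact that a minimal idempotent $u$ satisfies $u(x,z)=(u_Xx,z)$ says nothing about collapsing two points $(x,z)$, $(x,z')$ of $M$. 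Second, in your ``simplest packaging'' the existence of $p\in E(X\times Z,T\times S)$ with $p(x_0,z_1)=(x_0,z_0)$ is precisely the statement to be proved: since $M$ is minimal and contains $(x_0,z_1)$, we have $M=E(X\times Z,T\times S)\cdot(x_0,z_1)$, so such a $p$ exists if and only if $(x_0,z_0)\in M$. The subsequent aim of ``forcing $z_0=z_1$'' is also misguided: $(x_0,z_0)\in M$ can hold with $z_0\neq z_1$ (again by the example above), so no correct argument can reach that conclusion for an arbitrary $z_1$ in the fiber.

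The argument can be repaired by aiming instead at the statement that $(x_0,z_0)$ is an almost periodic point, i.e.\ that $v(x_0,z_0)=(x_0,z_0)$ for some idempotent $v$ in a minimal left ideal $L$ of $E(X\times Z,T\times S)$. The projection $\phi\colon E(X\times Z,T\times S)\to E(X,T)$ is a continuous onto homomorphism, so $\phi(L)$ is a minimal left ideal of $E(X,T)$; since $(X,T)$ is minimal, $\{q\in\phi(L)\colon qx_0=x_0\}$ is a nonempty closed subsemigroup and hence contains an idempotent $u$; then $\phi^{-1}(\{u\})\cap L$ is a nonempty closed subsemigroup of $L$ and contains an idempotent $v$. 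Now $v_Xx_0=ux_0=x_0$, and $v_Z$ is an idempotent of $E(Z,S)$, which is a group by Ellis's theorem since $(Z,S)$ is distal, so $v_Z=\mathrm{id}_Z$ and $v(x_0,z_0)=(x_0,z_0)$. This is genuinely different from the paper's route, which first reduces to $(Z,S)$ minimal (the projection of the orbit closure to $Z$ is transitive, hence minimal by distality), then takes a minimal subset $K$ of $X\times E(Z,S)$ for the action $T\times\sigma$ and pushes it into $X\times Z$ by an evaluation map, using only that elements of $E(Z,S)$ are bijections; that packaging avoids the idempotent bookkeeping entirely.
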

\begin{proof}
Let $W$ denote the closed orbit of $(x_0,z_0)$ in $X\times Z$ under $T\times S$. The projection of $W$ on $Z$ is transitive  and thus is minimal by distality. Therefore, without loss we can assume that $(Z,S)$ is minimal.

Let $\sigma$ denote the transformation $p\mapsto S\circ p$ of $E(Z,S)$. Then $(E(Z,S),\sigma)$ is minimal. Let $K$ be a minimal $(T\times\sigma)$-invariant subset of $X\times E(Z,S)$.  The projection $K\to X$ is onto and there exists $p_0\in E(Z,S)$ such that $(x_0,p_0)\in K$. Then $p_0$ is a bijection of $Z$ and there exists $z_1\in Z$  such that $p_0(z_1)=z_0$. The image of $K$ under the map $(x,p)\mapsto (x_0,p(z_1))$ is a closed minimal $(T\times S)$-invariant subset of $X\times Z$ and contains $(x_0,z_0)$, and thus is equal to $W$.
\end{proof}

\begin{lemma}
\label{lem:mini-order}
Let $(X,T)$ be a minimal system and $(x_0,x_1)\in\RPs(X,T)$. Let $(Z,S)$ be a minimal system of order $s$ and $W$ be a closed $(T\times S)$-invariant subset of 
$X\times Z$. Then for $z\in Z$, we have $(x_0,z)\in W$ if and only of $(x_1,z)\in W$. 
\end{lemma}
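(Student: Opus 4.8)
The plan is to exploit the fact that $\RPs(X,T)$ is, by definition, the relation detecting failure of $X$ to factor through a system of order $s$, combined with the rigidity of the target system $(Z,S)$, which \emph{is} of order $s$. First I would recall the concrete description of the regionally proximal relation of order $s$: $(x_0,x_1)\in\RPs(X,T)$ means that there exist sequences $(n^{(1)}_i),\dots,(n^{(s)}_i)$ in $\N$ (or $\Z$), points $y_i\to x_0$, and, for each nonempty $\epsilon\in\{0,1\}^s\setminus\{\bzero\}$, the iterates $T^{\,n^{(\epsilon)}_i}y_i$ converging, with the limit corresponding to $\bzero$ being $x_0$ and the limit corresponding to $\one$ being $x_1$; here $n^{(\epsilon)}_i=\sum_{j:\epsilon_j=1}n^{(j)}_i$. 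The essential input is that this ``dynamical cube'' characterization is genuinely equivalent to membership in $\RPs$ (this is the content of \cite{HKM,SY} cited in the paragraph just before the lemma).

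Next I would bring in the structure of $(Z,S)$. Since $(Z,S)$ is a minimal system of order $s$, its regionally proximal relation of order $s$ is trivial: $\RPs(Z,S)=\Delta_Z$. The point is that for any choice of sequences of integers and any point $z\in Z$, if we form the analogous cube on the $Z$-coordinate using the \emph{same} integer sequences $(n^{(1)}_i),\dots,(n^{(s)}_i)$, then after passing to a subsequence all the ``vertex limits'' exist, and the vertex indexed by $\one$ must coincide with the vertex indexed by $\bzero$, because a pair of such limit points always lies in $\RPs(Z,S)$, which is the diagonal. In other words, whatever $z^\bzero$ is (the limit of $S^{\,n^{(\bzero)}_i}z = z$, i.e.\ $z$ itself, once we arrange $n^{(\bzero)}_i\equiv 0$ or pass to the relevant subsequence so this limit is well-defined), the limit $z^\one$ of $S^{\,n^{(\one)}_i}z$ equals $z^\bzero$.

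Now I would run the two cubes simultaneously in $X\times Z$. Fix $z\in Z$ and suppose $(x_0,z)\in W$. Using the cube sequences witnessing $(x_0,x_1)\in\RPs(X,T)$ and passing to a common subsequence, I get convergence of $(T\times S)^{\,n^{(\epsilon)}_i}$ applied to each vertex point $(y_i,z)$ for every $\epsilon$; the vertices $(y_i,z)$ themselves converge to $(x_0,z)\in W$, and since $W$ is closed and $(T\times S)$-invariant, the limit of $(T^{\,n^{(\one)}_i}y_i,\,S^{\,n^{(\one)}_i}z)$ lies in $W$. But the first coordinate of that limit is $x_1$ and, by the previous paragraph, the second coordinate is $z$. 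Hence $(x_1,z)\in W$. The converse direction is identical with the roles of $x_0$ and $x_1$ interchanged, which is legitimate since $\RPs$ is a symmetric relation.

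The main obstacle I anticipate is bookkeeping rather than conceptual: one must be careful that a single sequence of integer tuples $(n^{(1)}_i,\dots,n^{(s)}_i)$ can be chosen so that, after passing to one subsequence, \emph{all} $2^s$ vertex limits exist simultaneously in both $X$ and $Z$ and all the relevant base points converge — this is routine by compactness and a diagonal argument, but it is where the ``cube'' formalism of $\RPs$ has to be invoked precisely. A secondary point to get right is the normalization of the $\bzero$-vertex: in the standard cube description one may need to pre-compose with a fixed power or further refine so that $S^{\,n^{(\bzero)}_i}z$ literally returns to $z$ (or converges to a point from which one can translate back to $z$ by minimality of $(Z,S)$), after which the triviality $\RPs(Z,S)=\Delta_Z$ forces the $\one$-vertex to agree with the $\bzero$-vertex. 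Once these are in place, the closedness and $(T\times S)$-invariance of $W$ do the rest automatically.
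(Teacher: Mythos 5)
The central step of your argument is false. You claim that if $\bigl(n^{(1)}_i,\dots,n^{(s)}_i\bigr)$ are the integer tuples witnessing $(x_0,x_1)\in\RPs(X,T)$ and $z\in Z$, then after passing to a subsequence the limits $z^\epsilon=\lim_i S^{\,n^{(\epsilon)}_i}z$ satisfy $z^{\one}=z^{\bzero}$ ``because a pair of such limit points always lies in $\RPs(Z,S)$, which is the diagonal.'' No characterization of $\RPs$ gives this: the cube criterion for $(u,v)\in\RPs$ requires the limits at \emph{all} nonempty vertices to coincide (with $v$), whereas for an arbitrary cube built over $Z$ from arbitrary integer tuples the $\bzero$- and $\one$-vertices are essentially unconstrained. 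Concretely, take $s=1$, $Z=\T$ an irrational rotation by $\alpha$, and any sequence $n_i$ with $n_i\alpha\to\beta\neq 0$; then $z^{\bzero}=z$ while $z^{\one}=z+\beta\neq z$, even though $Z$ is a system of order $1$. The integers produced by the $X$-cube are tied to the dynamics of $Z$ only through the joining $W$, and your argument never uses $W$ to constrain the $Z$-coordinate. A second, independent problem: even granting $S^{\,n^{(\one)}_i}z\to z$, you apply the closedness and invariance of $W$ to the points $(y_i,z)$, which merely converge to $(x_0,z)\in W$ but need not belong to $W$; hence the limit of $(T\times S)^{\,n^{(\one)}_i}(y_i,z)$ need not lie in $W$.

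The paper circumvents both difficulties by a different mechanism: it first reduces to $W$ minimal (Lemma~\ref{lem:minimal-orbits}), then lifts the pair $(x_0,x_1)$ through the factor map from a minimal subset $K\subset X\times E(Z,S)$ to a pair $\bigl((x_0,p_0),(x_1,p_1)\bigr)\in\RPs(K)$ using the Shao--Ye theorem that $\pi\times\pi$ maps $\RPs$ of a minimal extension \emph{onto} $\RPs$ of the factor, pushes this pair into $\RPs(W)$, and finally projects to $Z$ to obtain a pair in $\RPs(Z,S)=\Delta_Z$. Some input of this kind --- that $\RPs$ lifts through extensions of minimal systems --- appears unavoidable here, and it is exactly what the sequence bookkeeping in your proposal cannot replace.
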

\begin{proof}
By Lemma~\ref{lem:minimal-orbits}, without loss we can assume that $W$ is minimal.

Let $z_0\in Z$ be such that  $(x_0,z_0)\in W$.  We claim that $(x_1,z_0)\in W$. 

As in Lemma~\ref{lem:minimal-orbits}, $\sigma\colon E(Z,S)\to E(Z,S)$ denotes the map $p\mapsto S\circ p$ and $(E(Z,S),\sigma)$ is minimal.
Let $K$ be a closed and minimal $(T\times\sigma)$-invariant subset of $X\times E(Z,S)$. The first projection $\pi_1\colon K\to X$  is a factor map and thus by~\cite{SY}, the map
$\pi_1\times\pi_1$ maps $\RPs(K)$ onto $\RPs(X)$ and there exists $p_0,p_1\in E(Z,S)$ such that 
$$
(x_0,p_0)\in K,\ (x_1,p_1)\in K,\text{ and }\bigl((x_0,p_0),(x_1,p_1)\bigr)\in \RPs(K).
$$
Let $z_2\in Z$ be such that $p_0(z_2)=z_0$. The map $\pi\colon(x,p)\mapsto (x,p(z_2))$ from $K$ to $X\times Z$ satisfies $\pi\circ(T\times\sigma)=(T\times S)\circ\pi$ and thus its range  is a minimal $(T\times S)$ invariant subset of $X\times Z$.  This set  contains $(x_0,z_0)$ and thus is equal to $W$. 

Let $z_1=p_1(z_2)$. We have $(x_1,z_1)=\pi(x_1,p_1)\in W$.

On the other hand, $\pi\times\pi$ maps $\RPs(K)$ to $\RPs(W)$ and thus 
$$\bigl((x_0,z_0),(x_1,z_1)\bigr)\in\RPs(W).$$  
Since the second projection 
$(x,z)\mapsto Z$ is a factor map from $W$ to $Z$, $(z_0,z_1)\in\RPs(Z)$. Since $Z$ is a system of order $s$, $z_0=z_1$ and $(x_1,z_0)\in W$ and the claim is proven.  

Exchanging the roles of $x_0$ and $x_1$, we have the equivalence.
\end{proof}

\begin{lemma}
\label{lem:RP-nil-Synd}
Let $(X,T)$ be a minimal system, $(x_0,x_1)\in\RPs(X,T)$, and $U$ be an open neighborhood of $x_1\in X$. Then for every \nildbohrzero set $E$, $N(x_0,U)\cap E$ is syndetic.
\end{lemma}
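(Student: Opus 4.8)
The plan is to combine Lemma~\ref{lem:mini-order} with Theorem~\ref{th:HSYA} (the Huang--Song--Ye result) applied to the $s$-step nilsystem witnessing that $E$ is a \nildbohrzero set. Fix a minimal $s$-step nilsystem $(Z,S)$, a point $z_0\in Z$, and an open neighborhood $V\ni z_0$ with $N(z_0,V)\subset E$; without loss we may take $(Z,S)$ minimal, so it is a minimal system of order $s$. First I would form the product system $X\times Z$ and let $W$ be the closed orbit closure of the pair $(x_1,z_0)$ under $T\times S$; by Lemma~\ref{lem:minimal-orbits} (since $(Z,S)$ is distal) $W$ is minimal. Since $(x_0,x_1)\in\RPs(X,T)$, Lemma~\ref{lem:mini-order} applied to $W$ gives that $(x_0,z_0)\in W$ as well, because $(x_1,z_0)\in W$.

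Next I would exploit minimality of $(W,T\times S)$: the point $(x_1,z_0)\in W$ has a return time set to the open neighborhood $U\times V$ of itself that is syndetic in $\N$, call it $P=N_{T\times S}\bigl((x_1,z_0),U\times V\bigr)$. Now I would use that $(x_0,z_0)\in W$ together with the fact that $W$ is the orbit closure of $(x_1,z_0)$: by minimality again, the point $(x_0,z_0)$ lies in the orbit closure, so there is a sequence $(m_j)$ with $(T\times S)^{m_j}(x_1,z_0)\to(x_0,z_0)$; applying a further return-time shift along $P$ I would transport the syndetic set of returns of $(x_1,z_0)$ to $U\times V$ over to a set of times $n$ for which $T^n x_0\in U$ and $S^n z_0\in V$ simultaneously. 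The cleanest way to arrange this is: pick $m$ with $(T\times S)^m(x_1,z_0)$ very close to $(x_0,z_0)$, shrink to a smaller open box $U'\times V'$ around $(x_1,z_0)$ whose $(T\times S)^m$-image sits inside $U\times V$, and note that $N_{T\times S}\bigl((x_1,z_0),U'\times V'\bigr)$ is still syndetic by minimality of $W$; then $m+N_{T\times S}\bigl((x_1,z_0),U'\times V'\bigr)$ is a syndetic set of $n$ with $T^n x_0\in U$ and $S^n z_0\in V$. For every such $n$ we have $n\in N(x_0,U)$ and $n\in N(z_0,V)\subset E$, so $N(x_0,U)\cap E$ contains a syndetic set, hence is syndetic.

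The main obstacle, and the point that needs the most care, is the transport step: one must make sure that after translating by $m$ the intersection of the two return-time conditions ($T^nx_0\in U$ and $S^nz_0\in V$) is genuinely syndetic and not merely infinite. This is exactly where minimality of the product system $W$ is essential — a return-time set of a point to an open set in a minimal system is always syndetic — so the argument hinges on having set up $W$ as a minimal system containing \emph{both} $(x_1,z_0)$ and $(x_0,z_0)$, which is supplied by Lemma~\ref{lem:minimal-orbits} and Lemma~\ref{lem:mini-order}. A minor bookkeeping point is choosing the radii of the nested boxes $U'\subset U$, $V'\subset V$ using uniform continuity of $(T\times S)^m$; this is routine. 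Everything else (unwinding that $N(z_0,V)\subset E$, and that $s$-step nilsystems are systems of order $s$ so Lemma~\ref{lem:mini-order} applies) is immediate from the definitions and results already established.
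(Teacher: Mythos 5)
Your setup is exactly the paper's: reduce to showing $N(x_0,U)\cap N(z_0,V)$ is syndetic for the witnessing nilsystem $(Z,S)$, form the product, use Lemma~\ref{lem:minimal-orbits} to get a minimal closed invariant set $W$, and use Lemma~\ref{lem:mini-order} to get both $(x_1,z_0)$ and $(x_0,z_0)$ into $W$. The problem is your final ``transport'' step, which is both unnecessary and, as written, incorrect. First, the containment you assert is false: $(T\times S)^m(x_1,z_0)$ is close to $(x_0,z_0)$, so $(T\times S)^m(U'\times V')$ is a small set near $(x_0,z_0)$, and there is no reason for it to lie in $U\times V$ --- indeed $U$ is a neighborhood of $x_1$, and in the intended applications (Corollary~\ref{cor:Nxrecir}) one has $x_0\notin\overline U$, so $(x_0,z_0)\notin U\times V$. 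Second, and more fundamentally, your construction only ever tracks the orbit of $(x_1,z_0)$: for $n=m+n'$ with $n'\in N_{T\times S}\bigl((x_1,z_0),U'\times V'\bigr)$ you can at best conclude that $(T\times S)^{n}(x_1,z_0)$ lies in $(T\times S)^m(U'\times V')$, i.e.\ a statement about $T^nx_1$, whereas the lemma requires $T^nx_0\in U$. Passing from the orbit of $(x_1,z_0)$ to that of $(x_0,z_0)$ by approximation would require continuity of $(T\times S)^{n}$ uniformly in $n$, which you do not have.

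The repair is immediate and is what the paper does: since $(x_0,z_0)\in W$, $W$ is minimal, and $(U\times V)\cap W$ is a nonempty open subset of $W$ (it contains $(x_1,z_0)$), the return-time set $N_{T\times S}\bigl((x_0,z_0),U\times V\bigr)=N_T(x_0,U)\cap N_S(z_0,V)$ is syndetic by the standard fact that in a minimal system every point visits every nonempty open set along a syndetic set of times --- the very fact you invoke, but applied to the wrong point. No shift by $m$ and no shrinking of boxes is needed; everything else in your argument (the reduction to $N(z_0,V)\subset E$, the use of distality of $(Z,S)$, and the application of Lemma~\ref{lem:mini-order}) is correct.
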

\begin{proof}
Let $(Z,S)$ be a minimal system of order $s$, $z_0\in Z$, and $V$ be an open neighborhood of $z_0\in Z$. We have to show that $N(x_0,U)\cap N(z_0,V)$ is syndetic.

Let $W$ be the closed $(T\times S)$-orbit of $(x_0,z_0)$  in $X\times Z$. By Lemma~\ref{lem:minimal-orbits}, $(W,T\times S)$ is minimal and, by Lemma~\ref{lem:mini-order}, $(x_1,z_0)\in W$. We have that 
$(U\times V)\cap W$ is an open neighborhood of $(x_1,z_0)$ in $W$ and thus 
$N_{T\times S}\bigl((x_0,z_0),U\times V\bigr)=N_T(x_0,U)\cap N_S(z_0,V)$ is syndetic.
\end{proof}

\begin{theorem}[Huang, Song, and Ye~\mbox{\cite[Theorem E]{HSY}}]\label{theorem:Nxu-rec}
Let $(X,T)$ be a minimal system, $(x_0,x_1)\in\RPs(X,T)$, and $U$ be an open neighborhood of $x_1\in X$. Then $N(x_0,U)$ is a set of $s$-recurrence.
\end{theorem}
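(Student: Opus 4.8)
The plan is to deduce this from the combination of Lemma~\ref{lem:RP-nil-Synd} and the multiple-recurrence machinery developed in Theorem~\ref{th:mult-top-recur}, together with Theorem~\ref{th:HSYA}. We want to show $N(x_0,U)$ is a set of $s$-recurrence, i.e.\ that it satisfies one of the equivalent conditions of Theorem~\ref{th:mult-top-recur} with $\ell=s$. The most convenient to verify is the intersectivity condition~\eqref{it:intersectivel}: every syndetic set $E'\subset\N$ contains an arithmetic progression of length $s+1$ whose common difference lies in $N(x_0,U)$.

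First I would fix a syndetic set $E'\subset\N$. By the combinatorial machinery, one reduces to a dynamical statement: pass (via the Furstenberg correspondence, as in the proof of \eqref{it:top-rec1l} $\Rightarrow$ \eqref{it:largel}) to a minimal system $(Z,S)$, a clopen set $V$, and a point $z_0$ whose orbit visits $V$ exactly along a set related to $E'$; what we need is $n\in N(x_0,U)$ with $z_0, S^n z_0,\dots,S^{sn}z_0$ all in $V$, i.e.\ $n\in N(x_0,U)\cap N^s_S(V)$ in the appropriate sense — more precisely $n$ a common difference of a length-$(s+1)$ progression in $N(z_0,V)$ with step in $N(x_0,U)$. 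The key point is that the relevant obstruction for $s$-recurrence lives in systems of order $s$: the ``characteristic factor'' for length-$(s+1)$ recurrence is the maximal factor of order $s$, so without loss we may assume $(Z,S)$ is a minimal system of order $s$ and in fact a minimal $s$-step nilsystem (using Theorem~\ref{th:HSYA} to realize any $\nildbohrzero$ obstruction as $N^s(V)$ inside such a system, and the structure theory that the maximal order-$s$ factor controls length-$(s+1)$ progressions). Concretely: it suffices to show that for every minimal $s$-step nilsystem $(Z,S)$ and every nonempty open $V\subset Z$, there is $n\in N(x_0,U)$ with $n\in N^s_S(V)$.

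Now the heart of the argument: given such $(Z,S)$ and $V$, pick $z_0\in Z$ and let $E=N^s_S(V)$. Observe $E$ contains a $\nildbohrzero$ set: indeed $E = N_{S\times S^2\times\cdots\times S^s}\big((z_0,\dots,z_0), V^s\big)$ restricted appropriately, and the closed orbit of $(z_0,\dots,z_0)$ under $S\times\cdots\times S^s$ is itself a minimal $s$-step nilsystem (a subnilsystem of $Z^s$, distal hence minimal on the orbit closure), so $E$ is a $\nildbohrzero$ set by definition. Then Lemma~\ref{lem:RP-nil-Synd}, applied with this $\nildbohrzero$ set $E$, gives that $N(x_0,U)\cap E$ is syndetic, in particular nonempty. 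Any $n$ in this intersection satisfies $n\in N(x_0,U)$ and $n\in N^s_S(V)$, which is exactly what we needed. Unwinding the correspondence, this furnishes the desired length-$(s+1)$ progression in $E'$ with common difference in $N(x_0,U)$, establishing condition~\eqref{it:intersectivel} and hence that $N(x_0,U)$ is a set of $s$-recurrence.

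The step I expect to be the main obstacle is the reduction ``without loss of generality $(Z,S)$ is a minimal $s$-step nilsystem'' when verifying $s$-recurrence — i.e.\ justifying that the maximal order-$s$ factor is characteristic for length-$(s+1)$ topological recurrence, so that checking recurrence against all $\nildbohrzero$ sets (equivalently, against all minimal $s$-step nilsystems, via Theorem~\ref{th:HSYA}) is enough. In the ergodic setting this is the Host--Kra structure theorem; topologically the analogous fact is packaged in the work of Huang, Shao, and Ye via the regionally proximal relation $\RPs$ and Theorem~\ref{th:HSYA}, and the clean way to run the argument is to note that a set is a set of $s$-recurrence iff it meets $N^s(U)$ for all minimal $(X,T)$ and nonempty open $U$, and then use Theorem~\ref{th:HSYA} to replace an arbitrary such $N^s(U)$ by one coming from an $s$-step nilsystem. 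Getting this quotient/lifting bookkeeping exactly right (particularly that the orbit closure in $Z^s$ stays an $s$-step nilsystem, so that $N^s_S(V)$ genuinely is $\nildbohrzero$) is the delicate part; everything else is a direct application of Lemma~\ref{lem:RP-nil-Synd}.
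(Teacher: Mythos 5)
Your overall strategy --- intersect $N(x_0,U)$ with a \nildbohrzero set sitting inside $N^s(V)$ and invoke Lemma~\ref{lem:RP-nil-Synd} --- is the right one, and it is essentially the paper's. But the step you yourself flag as the main obstacle, the reduction ``without loss of generality $(Z,S)$ is a minimal $s$-step nilsystem,'' is a genuine gap, and Theorem~\ref{th:HSYA} does not fill it. Theorem~\ref{th:HSYA} goes in the opposite direction: it says that every \nildbohrzero set \emph{contains} some $N^s(U)$ coming from a nilsystem. What your reduction needs is the converse-type statement that $N^s(V)$ for an \emph{arbitrary} minimal system contains a \nildbohrzero set, and that is not what is available: the actual structural result, \cite[Theorem A(2)]{HSY}, only gives $N^s(V)\supset E\setminus F$ with $E$ a \nildbohrzero set and $F$ of uniform upper density zero. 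If your clean reduction were valid, it would show that every \nildbohrzeroo set is a set of $s$-recurrence; for $s=1$ that is exactly Katznelson's Question~\ref{q:katznelson}, which is open. So the reduction cannot be waved through.

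The fix is short and is what the paper does: take an arbitrary minimal system and nonempty open $V$, apply \cite[Theorem A(2)]{HSY} to get $N^s(V)\supset E\setminus F$ as above, and then use the \emph{full} strength of Lemma~\ref{lem:RP-nil-Synd} --- namely that $N(x_0,U)\cap E$ is syndetic, not merely nonempty --- to conclude that $N(x_0,U)\cap E$ cannot be contained in the density-zero set $F$, hence meets $E\setminus F\subset N^s(V)$. Note that in your write-up you only extract nonemptiness from the syndeticity; the syndeticity is precisely what is needed to beat the exceptional set $F$, and discarding it is a symptom of the missing step. (Your side observation that, for a nilsystem $(Z,S)$, the orbit closure of the diagonal point under $S\times S^2\times\cdots\times S^s$ is again a minimal nilsystem, so that $N^s_S(V)$ contains a \nildbohrzero set, is correct but becomes unnecessary once the argument is run this way; likewise the detour through condition~\eqref{it:intersectivel} of Theorem~\ref{th:mult-top-recur} can be replaced by verifying condition~\eqref{it:top-rec2l} directly.)
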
 
\begin{proof}
Let $(X,T)$ be a minimal system, $V$ be a nonempty open subset of $X$, and 
let $\mu$ be an invariant ergodic measure on $X$. By~\cite[Theorem A(2)]{HSY}, 
there exist a \nildbohrzero set  $E$ and a set of uniform upper density zero $F$ such that $N^s(V)\supset E\setminus F$.
By Lemma~\ref{lem:RP-nil-Synd}, $E\cap N(x_0,U)$ is syndetic and thus is not included in $F$; it follows that $N^s(V)\cap N(x_0,U)\neq\emptyset$.
\end{proof}

We use this to construct explicit examples of various sets of recurrence:
\begin{example} 
If $(X,T)$ is a minimal system and $x_0, x_1\in X$ are 
proximal, then $(x_0, x_1)\in\RPs(X,T)$ for every $s\in\N$ (see ~\cite{HKM}).  
Thus if $U$ is an open neighborhood of $x_1$, then $N(x_0,U)$ is a set of multiple recurrence. 
If $x_0\notin\overline{U}$, then this set is not a set of 
pointwise recurrence.  
\end{example}

In Frantzikinakis, Lesigne, and Weirdl~\cite{FLW}, the authors build examples of sets of  $s$-recurrence that are not sets of $(s+1)$-recurrence; the framework is measurable dynamics but the same constructions  also work in the topological setting. We give a more general framework that gives further insight into the behavior of these examples using Theorem \ref{theorem:Nxu-rec}. 

\begin{corollary}
\label{cor:Nxrecir}
Let $(X,T)$ be  a minimal $s$-step nilsystem and let $(x_0,x_1)\in \RP^{[s-1]}(X,T)$. Let $U$ be an open neighborhood of $x_1$ in $X$ with $x_0\notin \overline U$. 
Then $N(x_0,U)$ is a set  of $(s-1)$-recurrence, is not a set of $s$-recurrence (even for $s$-step nilsystems), and is 
not a set of pointwise  recurrence.
\end{corollary}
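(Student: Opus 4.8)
The plan is to assemble the statement from three pieces, each drawing on a result already available in the excerpt. Recall the setup: $(X,T)$ is a minimal $s$-step nilsystem, $(x_0,x_1)\in\RP^{[s-1]}(X,T)$, and $U$ is an open neighborhood of $x_1$ with $x_0\notin\overline U$.

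\emph{$(s-1)$-recurrence.} This is immediate from Theorem~\ref{theorem:Nxu-rec} applied with the parameter $s$ replaced by $s-1$: since $(x_0,x_1)\in\RP^{[s-1]}(X,T)$ and $U$ is an open neighborhood of $x_1$, the set $N(x_0,U)$ is a set of $(s-1)$-recurrence. No further work is needed here.

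\emph{Not a set of $s$-recurrence, already for $s$-step nilsystems.} The idea is to test $N(x_0,U)$ against the system $(X,T)$ itself (or the orbit closure of a suitable diagonal point), exploiting that $(X,T)$ is a \emph{genuine} $s$-step nilsystem, hence a system of order $s$, so $\RP^{[s]}(X,T)$ is trivial while $\RP^{[s-1]}(X,T)$ need not be. Concretely: if $N(x_0,U)$ were a set of $s$-recurrence, then by characterization~\eqref{it:returnxl} of Theorem~\ref{th:mult-top-recur} (or directly the definition via $N^s$), for every $\ve>0$ there would be $n\in N(x_0,U)$ and a point $z\in X$ with $d(T^{jn}z,z)<\ve$ for $1\le j\le s$. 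Here is where one uses that $(x_0,x_1)\in\RP^{[s-1]}$ but $x_0\ne x_1$ (which follows from $x_0\notin\overline U\ni x_1$): because $x_0$ and $x_1$ are regionally proximal of order $s-1$ but the system has order $s$, the return-time set $N(x_0,U)$ forces $T^{jn}$ of nearby points to move $x_0$ toward the $\RP^{[s-1]}$-class containing $x_1$, and I want to show that no point can be made $\ve$-periodic under $T^n,\dots,T^{sn}$ for $n\in N(x_0,U)$ once $\ve$ is smaller than, say, $\tfrac13 d(x_0,\overline U)$. The clean way to run this is to invoke the structure theory: in a minimal system of order $s$, the relation $\RP^{[s-1]}$ coincides with the relation of having the same image in the maximal factor of order $s-1$, call it $Z_{s-1}$ with factor map $\pi_{s-1}$; and $n\in N(x_0,U)$ together with an almost-$(T^n,\dots,T^{sn})$-periodic point would, after projecting to $Z_{s-1}$ and using the $\RP^{[s-1]}$ hypothesis, produce a contradiction with the fact that on $Z_{s-1}$ the points $\pi_{s-1}(x_0)$ and $\pi_{s-1}(x_1)$ coincide while on $X$ they are separated by $U$. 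I expect this to be the main obstacle — pinning down exactly which minimal system to plug in and why a single $n$ in $N(x_0,U)$ cannot serve for $s$-recurrence requires care with the order-$s$ structure and possibly a direct computation in the Host--Kra / nilpotent coordinates; the cleanest argument may in fact be the explicit affine computation referenced in Section~\ref{sec:affine}, or a dimension/degree count on the top commutator factor $G_s$ as in the proof of Lemma~\ref{lem:densecommut} and Theorem~\ref{th:rec-nil}, showing that the linear-in-$n$ versus higher-degree-in-$n$ discrepancy obstructs simultaneous return of $z,T^nz,\dots,T^{sn}z$.

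\emph{Not a set of pointwise recurrence.} This is the easiest of the three: pointwise recurrence of $N(x_0,U)$ would require, taking the point $x_0$ itself in the minimal system $(X,T)$, that $\inf_{n\in N(x_0,U)} d(T^n x_0, x_0)=0$. But by definition every $n\in N(x_0,U)$ satisfies $T^n x_0\in U$, so $d(T^n x_0,x_0)\ge d(x_0,\overline U)>0$ since $x_0\notin\overline U$. Hence the infimum is bounded below by $d(x_0,\overline U)>0$, contradicting pointwise recurrence. I would write this paragraph first as a warm-up, then the $(s-1)$-recurrence paragraph, and spend the bulk of the proof on the middle assertion, most likely deferring to the affine-case computation in Section~\ref{sec:affine} for an explicit witness or adapting the commutator-factor argument from the topological proof of Theorem~\ref{th:rec-nil}.
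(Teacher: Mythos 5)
Your first and third assertions are handled correctly. The $(s-1)$-recurrence claim is exactly the paper's argument: apply Theorem~\ref{theorem:Nxu-rec} with $s-1$ in place of $s$. Your direct argument for the failure of pointwise recurrence (test the point $x_0$ in the minimal system $(X,T)$ itself: every $n\in N(x_0,U)$ sends $x_0$ into $U$, hence to distance at least $d(x_0,\overline U)>0$ from $x_0$) is valid and in fact more elementary than the paper's, which deduces this part too from the Nil--Bohr machinery.

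The middle assertion is where you have a genuine gap. You correctly sense that this is the hard part, but none of the routes you sketch is carried out, and the one you lean on most (an $\ve$-periodicity obstruction inside $(X,T)$ via the maximal order-$(s-1)$ factor) does not obviously close: knowing that $T^nx_0$ lands near $x_1$ for $n\in N(x_0,U)$ says nothing a priori about whether some \emph{other} point $z$ of some minimal $s$-step nilsystem can satisfy $d(T^{jn}z,z)<\ve$ for $1\le j\le s$ with that same $n$. The paper's mechanism is different and much shorter: since $X\setminus\overline U$ is an open neighborhood of $x_0$ in the $s$-step nilsystem $(X,T)$, the set $N(x_0,X\setminus\overline U)$ is by definition a \nildbohrzero set, and it is disjoint from $N(x_0,U)$; hence $N(x_0,U)$ is not a \nildbohrzeroo set. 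Corollary~\ref{cor:mmultiple-rec-nil} --- whose content rests on Theorem~\ref{th:HSYA} of~\cite{HSY}, which produces from any \nildbohrzero set $E$ a minimal $s$-step nilsystem $(Y,S)$ and a nonempty open $V$ with $N^s(V)\subset E$ --- then says that any set of $s$-recurrence for minimal $s$-step nilsystems must be a \nildbohrzeroo set, so $N(x_0,U)$ is not one; note the witnessing system is the $(Y,S)$ supplied by Theorem~\ref{th:HSYA}, not $(X,T)$ itself. Without invoking this equivalence (or reproving Theorem~\ref{th:HSYA} in your special case), your middle paragraph does not constitute a proof.
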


\begin{proof}
Since $(x_0,x_1)\in\RP^{[s-1]}(X,T)$, by Theorem~\ref{theorem:Nxu-rec}, $N(x_0,U)$ is a set of $(s-1)$-recurrence.  On the other hand, $N(x_0,X\setminus\overline U)$ is a \nilbohrzero s set that does not intersect $N(x_0,U)$, and thus this last set is not a \nilbohrzeroo s set.  By Corollary~\ref{cor:mmultiple-rec-nil}, it is not a set of $s$-recurrence and is not a set of pointwise  recurrence.
\end{proof}

This leads to various examples of sets of recurrence and non-recurrence.  
We begin with a simple observation.  If $(X,T)$ is a minimal $2$-step nilsystem and 
$Y$ is its maximal equicontinuous factor, then $X$ is an isometric extension of $Y$. If $x_0, x_1\in X$ are distinct points with the same projection in $Y$, then $(x_0,x_1)\in \RP^{[1]}(X,T)$. Thus if $U$ is an open neighborhood of $x_1$ and 
$x_0\notin\overline{U}$, then $N(x_0, U)$ is a set of Bohr recurrence and thus of 
Bohr multiple recurrence. However, it is not a set of double recurrence for $(X,T)$.  

More generally, we have the examples from~\cite{FLW}:
\begin{example}
\label{example:FLW}
The set $S=\{n\in\N\colon\norm{n\beta}>\ve\}$ is not a set of recurrence for any $\beta\in\T$ 
and $0< \ve <  1/2$.  More generally, it was shown in~\cite{FLW} that for any $s\geq 1$, $\ve > 0$, and any $\beta\in\T$,  the set $S=\{n\in\N\colon \norm{n^s\beta}> \ve\}$ is a set of $(s-1)$-recurrence and is not a set of $s$-recurrence.  

We explain how to prove this result using the current machinery.  
For rational $\beta$ the result is obvious and so we assume that $\beta$ is irrational.  Let $(\T^s, T)$ be the $s$-step affine nilsystem defined in Section~\ref{sec:affine}, where $Tx = Mx+\alpha$ and $\alpha$ is to yet be determined. 
Set $a = (0,0, \ldots, 0) \in\T^s$ and $b = (1/2, 0, \ldots, 0)\in\T^s$.  
Then for every $n\in\N$, we have that 
$T^na = (\id + M +\ldots + M^{n-1})\alpha$. By formula~\eqref{eq:M} giving the entries of $M^n$, we can choose $\alpha$ with $\alpha_s$ irrational such that $(T^na)_1 = n^s \beta$.  
As in the proof of Theorem~\ref{th:lift-affine},
the maximal $(s-1)$-step factor of $(\T^s, T)$
 is $(\wt{X}, \wt{T})$, where $\wt X$ is the quotient of $\T^s$ under the subgroup $\{(t, 0, \ldots, 0)\colon t\in\T\}$. Thus $a$ and $b$ have the same projection on $\wt X$ and so $(a, b)\in\RP^{[s-1]}(\T^s,T)$.  
Setting $U = \{x\in\T^s\colon \norm{x_1}< \ve\}$, 
we have that $U$ is an open set containing $a$ and $b\notin\overline{U}$. 
The statement now follows from Corollary~\ref{cor:Nxrecir}.

On the other hand, the set $\{n \in \N \colon \norm{n^s\beta}< \ve\}$ is exactly 
$N(a, U)$, and so as already remarked, it is a set of multiple recurrence. 
\end{example}

We remark that all of these examples are large sets, in the sense that they have positive density. However, there are many examples of sets of multiple recurrence of density zero, such as any $\ip$-set~\cite{furstenberg} the set of values of a polynomial~\cite{BL},  the set of shifted primes~\cite{FHK}, or a set containing arbitrarily long arithmetic progressions and 
such that any integer occurs as a common difference~\cite{FWierdl}.   Adapting ideas of~\cite{FWierdl}, one can construct zero density sets of $(s-1)$-recurrence that are not sets of $s$-recurrence.

\subsection{Lifting multiple recurrence in affine systems}

\label{sec:affine}
\begin{definition}
For $s>1$, let 
$M$ be an $s \times s$ matrix with integer entries.  Assume that $M$ is {\em unipotent}, meaning that $(M-\id)^s=0$, and let $\alpha\in\T^s$. Define $T\colon  \T^{s} \to \T^{s}$ by $T(x)=Mx+\alpha$ (operations are always $\mod 1$).  The system $(\T^s, T)$ is called an 
{\em affine system on $\T^s$}.
\end{definition}

The system $(\T^s, T)$ is minimal
if the projection of $\alpha$ on $\T^s/\ker(M-\id)$ generates a minimal rotation on this torus~\cite{parry}.  

The system $(\T^s, T)$ can be represented as a nilsystem.  Namely, let $G$ denote the group of transformations of $\T^s$ spanned by $M$ and the translations $S_\beta\colon x\mapsto x+\beta$ for $\beta\in\T^s$ and let $\T^s$ be identified with the subgroup 
$\{S_\beta\colon\beta\in\T^s\}$ of $G$. For $j\geq 2$, $G_j\subset\T^s$ and more precisely
 $$
G_j =\range(M-\id)^{j-1}.
$$
Therefore,  $G$ is an $s$-step nilpotent Lie group, and the stabilizer of $0$ is $\Gamma=\{M^n\colon n\in\Z\}$. Then $\T^s$ is identified in the natural way with $G/\Gamma$.

We prove Conjecture~\ref{conj:multiple} for affine systems: 
\begin{theorem}
\label{th:lift-affine}
Let $1 \leq r \leq s-1$ be an integer. If $R \subset  \N$ is a set of $r$-recurrence for all $(s-1)$-step minimal affine systems, then it is a set of $r$-recurrence for $s$-step minimal affine systems.
\end{theorem}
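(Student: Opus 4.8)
The plan is to deduce the statement from characterization~\eqref{it:returnxl} of $r$-recurrence in Theorem~\ref{th:mult-top-recur}: it suffices to prove that for every minimal $s$-step affine system $(\T^s,T)$, $Tx=Mx+\alpha$, and every $\ve>0$ there are $x\in\T^s$ and $n\in R$ with $d(T^{jn}x,x)<\ve$ for $1\le j\le r$. Put $N:=M-\id$. If $N^{s-1}=0$ the system has step $\le s-1$ and the conclusion is the hypothesis, so assume $N^{s}=0$, $N^{s-1}\neq0$; after a change of coordinates we may also assume $M$ is a single unipotent Jordan block of size $s$ (the general case is identical with more indices). Expanding $T^m x-x=(M^m-\id)x+S_m\alpha$ in powers of $N$ by binomial coefficients, a direct computation gives, with $\beta_k:=x_{k+1}+\alpha_k$ for $1\le k\le s-1$, $\beta_s:=\alpha_s$, and $\beta_l:=0$ for $l>s$,
$$(T^m x-x)_i=\sum_{k\ge1}\binom{m}{k}\beta_{i+k-1}\qquad(1\le i\le s).$$
Since $x_2,\dots,x_s$ — hence $\beta_1,\dots,\beta_{s-1}$ — range freely over $\T$ while $\beta_s=\alpha_s$ is a fixed irrational, what must be shown is: for every $\ve>0$ there exist $\beta_1,\dots,\beta_{s-1}\in\T$ and $n\in R$ with $\bigl\Vert\sum_{k\ge1}\binom{jn}{k}\beta_{i+k-1}\bigr\Vert<\ve$ for all $1\le i\le s$, $1\le j\le r$.

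\textbf{The conditions with $i\ge2$.} Each such condition involves only $\beta_2,\dots,\beta_s$, and the family of them says exactly that $0\in\T^{s-1}$ is an $r$-recurrent point, with return time $n$, of the step-$(s-1)$ affine system $y\mapsto M'y+(0,\dots,0,\alpha_s)$, where $M'$ is the Jordan block of size $s-1$; this system is minimal since $\alpha_s$ is irrational. Applying the hypothesis to it with $\ve/2$ in place of $\ve$, and using that $r$-recurrence is witnessed by infinitely many $n$ (as noted after Theorem~\ref{th:mult-top-recur}), I obtain $\beta_2^0,\dots,\beta_{s-1}^0\in\T$ and some $n\in R$, with $n$ as large as I please, for which the $i\ge2$ conditions hold to within $\ve/2$. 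Because these quantities are Lipschitz in the $\beta_k$'s, the $i\ge2$ conditions still hold to within $\ve$ after each $\beta_k^0$ is replaced by $\beta_k^0+\delta_k$ with $|\delta_k|<c\,\ve\,n^{-(k-1)}$, for a suitable constant $c=c(r,s)$.

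\textbf{The conditions with $i=1$.} It remains to choose $\beta_1$ and the perturbations $\delta_2,\dots,\delta_{s-1}$ so that these also hold. With $\theta:=n\beta_1\in\T$ and $\delta_k:=\beta_k-\beta_k^0$, the $i=1$ quantity for a given $j$ equals $j\theta+D_j+\sum_{k=2}^{s-1}\binom{jn}{k}\delta_k$, where $D_j:=\sum_{k\ge2}\binom{jn}{k}\beta_k^0$ is now fixed (with $\beta_s^0=\alpha_s$). So I must find $(\theta,\delta_2,\dots,\delta_{s-1})$, with $\theta\in\T$ arbitrary and $\delta_k$ in the interval above, so that the $\T^r$-valued affine map $L(\theta,\delta)=\bigl(j\theta+\sum_{k=2}^{s-1}\binom{jn}{k}\delta_k\bigr)_{1\le j\le r}$ takes the value $(-D_1,\dots,-D_r)$. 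Over $\T\times\prod_{k=2}^{s-1}(-c\ve n^{-(k-1)},\,c\ve n^{-(k-1)})$ the image of $L$ is all of $\T^r$ once $n$ is large enough: the $s-1$ vectors $u_1=(1,2,\dots,r)$ and $u_k=(\binom{n}{k},\binom{2n}{k},\dots,\binom{rn}{k})$ for $2\le k\le s-1$ span $\R^r$ — this is precisely where $r\le s-1$ enters, the spanning amounting to the linear independence of the evaluation functionals at $1,\dots,r$ on the $(s-1)$-dimensional space of polynomials of degree $\le s-1$ with zero constant term, a Vandermonde-type fact — and, although the $\delta$-box shrinks with $n$, its $L$-image is a box of diameter of order $n\ve$ inside the rational subtorus spanned by $u_2,\dots,u_{s-1}$, so for large $n$ it covers that subtorus, after which the line in direction $u_1$ sweeps out all of $\T^r$. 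Choosing such $(\theta,\delta)$ produces the desired $n$ and $\beta_1,\dots,\beta_{s-1}$, hence $x$, and completes the proof.

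\textbf{Main obstacle.} The delicate step is this coordination of the $r$ simultaneous conditions with $i=1$. A single genuinely free parameter ($\beta_1$, i.e.\ the coordinate $x_2$) only cancels the part of the error that is linear in $j$, so one is forced to also move $\beta_2,\dots,\beta_{s-1}$; this is legitimate only because taking $n$ large makes the admissible, vanishingly small perturbations of those parameters act with enormous amplitude on the $i=1$ conditions — the affine counterpart of the $n$-th-root dilation $h=(h')^{1/n}$ in the topological proof of Theorem~\ref{th:rec-nil} — while $r\le s-1$ guarantees exactly enough independent directions for the resulting Vandermonde system to be solvable. Getting the hierarchy of perturbation scales $n^{-(k-1)}$ right — small enough not to disturb the inherited step-$(s-1)$ recurrence, yet large enough after dilation to steer the top-level coordinate — is the crux, and is also where the boundary case $r=s-1$ leaves the least room.
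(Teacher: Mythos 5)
Your overall frame is the same as the paper's: induct on $s$, use the $(s-1)$-dimensional affine quotient (your conditions with $i\ge 2$ are exactly recurrence in $\wt X=X/G_s$), lift a return time $n\in R$ that can be taken arbitrarily large, and then fix the top coordinate by perturbations of size $\sim \ve n^{-(k-1)}$ whose effect is amplified by the coefficients $\binom{jn}{k}$. The gap is in the step you yourself flag as the crux. Your covering claim is false as stated: the image of the admissible $\delta$-box need not cover the rational subtorus generated by $u_2,\dots,u_{s-1}$, because that subtorus is not of bounded size — it grows with $n$ at the same rate as the box image. Concretely, take $s=3$, $r=2$, so the only perturbation is $\delta_2$ with $|\delta_2|<c\ve/n$ and $u_2=\bigl(\binom n2,\binom{2n}2\bigr)$. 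For odd $n$ the primitive part of $u_2$ is $\bigl(\tfrac{n-1}2,\,2n-1\bigr)$, so the closed subtorus it generates in $\T^2$ is a circle of circumference of order $n$, while the image of the $\delta$-interval is a segment of length $2c\ve|u_2|/n$, also of order $c\ve n$; for small $\ve$ this is a fixed small fraction of the circle and never covers it, no matter how large $n$ is. Hence "box covers $H_2$, then the $\theta$-line sweeps out $\T^r$" does not go through, and a purely qualitative Vandermonde/spanning statement cannot repair it, since spanning alone is compatible with the constrained image being a thin strip that misses most of $\T^r$.

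What saves the statement is arithmetic, not just linear algebra, and supplying it is essentially the content of the paper's proof. In the example above one must first eliminate $\theta$ (quotient by the circle $\{(t,2t,\dots,rt)\}$), after which the relevant coefficient is the finite difference $\binom{2n}2-2\binom n2=n^2$, and $\delta_2 n^2$ ranges over an interval of length $2c\ve n>1$ for large $n$, so the target is hit. For general $r\le s-1$ one has to iterate such finite differences in $j$ and solve a triangular system whose inverse has entries decaying like $n^{-(j-1)}$; this is exactly the paper's device — the vectors $v_k=-\sum_{j=1}^k\binom kj(-1)^{k-j}w_j$ together with Lemma~\ref{lem:A}, which constructs the correction $y=\sum_k A^k v_k\in G_{s-r}$ with coordinates $O(1/n)$ and verifies $(M^n-\id)^k y=v_k+O(1/n)$. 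So your reduction, scales $n^{-(k-1)}$, and use of characterization~\eqref{it:returnxl} of Theorem~\ref{th:mult-top-recur} are all sound, but the final solvability step needs to be replaced by this quantitative triangular/finite-difference argument (or an equivalent one carried out in the quotient by the $\theta$-line); as written, the proof is missing its quantitative heart.
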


Before proving the theorem, we start with some preliminary simplifications.  There is a change of basis such that $M = PM'P^{-1}$, where $P$ has integer 
entries and non-zero determinant, and such that $M'$ 
is in Jordan canonical form. Let $\alpha'$ be such that $P\alpha' = \alpha$ and define $T'\colon \T^s\to\T^s$ to be $T'x = M'x+\alpha'$.  Then the system $(\T^s, T')$ is a minimal affine nilsystem, and the map $P\colon \T^s\to\T^s$ is a finite to one factor map from this system to the system $(\T^s, T)$.  Thus it suffices to prove the theorem for a system whose matrix $M$ is in Jordan canonical form.  

Furthermore, for notational convenience, we restrict ourselves to the case that there is a single block in the Jordan form, and we note at the end of the proof how to generalize this for multiple blocks. Thus, henceforth we assume that $M$ is an integer matrix with 
$$
M_{i,j} = 
\begin{cases}
1 & \text{ if } j=i \text{ or } j=i+1 \\
0 & \text{ otherwise. }
\end{cases}
$$
For $2\leq j\leq s$, we have
$$
G_j=\range(M-\id)^{j-1}=\bigl\{ x=(x_1,\dots,x_s)\in\T^s\colon x_i=0\text{ for }i\geq s-j+2\bigr\}.
$$
Minimality of the system $(\T^s, T)$ is equivalent to the last coordinate $\alpha_s$ of $\alpha$ being irrational.  

The matrix $M$ is exactly $s$-unipotent, meaning that 
$$
(M-\id)^s=0\text{ and }(M-\id)^{s-1}\neq 0, 
$$
and 
the system $(\T^s, T)$ is exactly an $s$-step minimal 
nilsystem, meaning that it is not an $(s-1)$-step nilsystem. 

From the form of the matrix $M$, we deduce that
for any $n\in\N$, the entries of $M^n$ satisfy
$M^n_{i,i}=1$ for $1\leq i\leq s$ and 
\begin{equation}
\label{eq:M}
\text{for }1\leq i< j\leq s,\  M^n_{i,j}=p_{j-i}(n), 
\end{equation}
where $p_1(n)=n$ and, for $1\leq k<s$, $p_{k}$ is a polynomial with integer coefficients whose degree is exactly $k$ such that $p_{k}(0)=0$. 

\begin{notation} 
Throughout this proof, $C$ denotes some constant, possibly taking on different values, 
where the only dependence is on $s,M$ and $\alpha$, but not on $n$.
For other  objects, the dependence on $n$ is often left implicit.
\end{notation}

\begin{lemma}
\label{lem:A}
Let $n \in \N$. For every $y\in\R^s$ with $y_s=0$, there exists a unique  $x\in \R^s$ such that $x_1=0$ and $(M^n-\id)x=y$ and we write $x=Ay$. Furthermore, if for some $k\in\{3,\dots,s\}$ we have $y_i=0$ for $i\geq s-k+2$, then $x_j=0$ for $j\geq s-k+3$.

Finally, there exists a constant $C>0$ such that if for some constant $\kappa>0$ we have $|y_i|\leq \kappa/n^{i-1}$ for $1\leq i\leq s-1$, then $|x_j|\leq C\kappa/n^{j-1}$ for $2\leq j\leq s$.
\end{lemma}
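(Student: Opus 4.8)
The plan is to observe that, in a suitable order, the system $(M^n-\id)x=y$ together with the normalization $x_1=0$ is triangular with every diagonal entry equal to $p_1(n)=n$, and then to read off existence, uniqueness, the vanishing assertion, and the size estimate from a single downward recursion. From~\eqref{eq:M} we have $(M^n-\id)_{i,j}=0$ for $j\le i$ and $(M^n-\id)_{i,j}=p_{j-i}(n)$ for $j>i$; in particular the last row of $M^n-\id$ is zero, which is why the condition $y_s=0$ is both necessary and built into the hypothesis. Writing $(M^n-\id)x=y$ row by row, the row $i=s$ merely repeats $y_s=0$, while for $1\le i\le s-1$ it reads
\[
n\,x_{i+1}=y_i-\sum_{j=i+2}^{s}p_{j-i}(n)\,x_j.
\]
Since $n\ge 1$, reading these equations from $i=s-1$ down to $i=1$ determines $x_s,x_{s-1},\dots,x_2$ successively and uniquely from $y$, and together with the prescribed value $x_1=0$ this gives the existence and uniqueness of $x=Ay$. (Equivalently, the substitution $z_m=x_{s+1-m}$, $w_m=y_{s-m}$ for $1\le m\le s-1$ turns the system into the genuinely lower-triangular system $w_m=n\,z_m+\sum_{m'<m}p_{m+1-m'}(n)\,z_{m'}$, of determinant $n^{s-1}\neq 0$.)

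For the vanishing assertion, suppose $3\le k\le s$ and $y_i=0$ for all $i\ge s-k+2$. I would prove by downward induction on $j$ that $x_j=0$ for $j\ge s-k+3$: in the displayed recursion, $x_{i+1}$ with $i+1\ge s-k+3$ is built from $y_i$ (which vanishes, since $i\ge s-k+2$) and from the $x_j$ with $j\ge i+2\ge s-k+3$, which vanish by the inductive hypothesis; hence $x_{i+1}=0$, the base case $j=s$ being the empty sum together with $y_{s-1}=0$.

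For the estimate, assume $|y_i|\le\kappa/n^{i-1}$ for $1\le i\le s-1$. Since $p_{j-i}$ is a polynomial of degree $j-i$, there is a constant $C$ depending only on $s$ and $M$ with $|p_{j-i}(n)|\le C\,n^{\,j-i}$ for all $n\ge 1$. I would then prove by downward induction on $j$ that $|x_j|\le C_j\,\kappa/n^{\,j-1}$ for suitable constants $C_j$ depending only on $s$ and $M$: the base case is $|x_s|=|y_{s-1}|/n\le\kappa/n^{\,s-1}$, and for the inductive step each term of the recursion obeys $|p_{j-i}(n)\,x_j|\le C\,n^{\,j-i}\cdot C_j\,\kappa/n^{\,j-1}=C\,C_j\,\kappa/n^{\,i-1}$, so
\[
|x_{i+1}|\le\frac1n\Bigl(\frac{\kappa}{n^{i-1}}+\sum_{j=i+2}^{s}C\,C_j\,\frac{\kappa}{n^{i-1}}\Bigr)=\Bigl(1+C\sum_{j=i+2}^{s}C_j\Bigr)\frac{\kappa}{n^{i}};
\]
taking $C:=\max_{2\le j\le s}C_j$ yields the claimed bound, with $C$ independent of $n$, $y$, and $\kappa$.

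Since all of this is bookkeeping once the triangular structure is in hand, the one point I would take care over — and the only place an error could realistically creep in — is keeping the index ranges straight: that the system constrains only rows $1,\dots,s-1$, that $x_1$ is prescribed while $x_2,\dots,x_s$ are solved for, and that the downward solve is compatible with the ranges $i\ge s-k+2$ and $j\ge s-k+3$ appearing in the statement. Passing to the reindexed variables $z_m,w_m$ above makes the triangularity literal and removes essentially all danger of an off-by-one slip.
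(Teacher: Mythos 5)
Your proof is correct and follows essentially the same route as the paper: both solve the triangular system $(M^n-\id)x=y$ for $x_2,\dots,x_s$ by back-substitution with diagonal entries $p_1(n)=n$, and both derive the estimate from the degree bound $|p_k(n)|\le C n^k$. The only cosmetic remark is that you reuse the letter $C$ for two different constants in the final step, but the recursion $C_{i+1}=1+C\sum_{j\ge i+2}C_j$ clearly produces constants depending only on $s$ and $M$, so the argument is sound.
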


It is immediate that the map $A$ is linear, given by a matrix with integer entries, and we can view it also as a homomophism from $
G_2$ to $\T^s=G$, mapping $G_k$ to $G_{k-1}$ for $3 \leq k\leq s$.

\begin{proof}
A real vector $x=(0,x_2,\dots,x_s)$ satisfies $(M^n-\id)x=y$ if and only if $x_2,\dots,x_s$ satisfy the linear system:
\begin{align*}
y_1 &= p_{1}(n)x_2+\dots+p_{d-1}(n)x_s;\\
\vdots & = \vdots\\
y_i &=p_{1}(n)x_{i+1}+\dots+p_{d-i}(n)x_s;\\
\vdots &=\vdots\\
y_{s-2} &= p_{1}(n)x_{s-1}+p_{2}(n)x_s;\\
y_{s-1} &= p_{1}(n)x_s.
\end{align*}
This triangular system has a unique solution since the coefficients $p_{1}(n)$ are non-zero and the first statement follows.  The second statement is obvious.
 
Since for $1\leq k \leq s$ the polynomial $p_{k}$ is exactly of degree $k$ and satisfies $p_k(0)=0$, there exist constants $C_1,C_2>0$ such that 
$$
\frac {C_1}{n^{k}}\leq p_{k}(n)\leq \frac {C_2}{n^{k}}\text{ for all }n\in\N
$$
and the last statement follows.
\end{proof}

We use this to complete the proof of Theorem~\ref{th:lift-affine}: 
\begin{proof}[Proof of Theorem~\ref{th:lift-affine}]
We proceed by induction on $s$. 
Assume that $r < s$ and that $R$ is  a set of $r$-recurrence for all affine $(s-1)$-step nilsystems. 
We show that $R$ is a set of $r$-recurrence for any affine $s$-step nilsystem.

Let $(\wt X,\wt T)$ be defined as in~\eqref{eq:Xtilde}. Recall that $\wt X$ is the quotient of $X$ under the action of $G_{s}$, and so can be identified with $G/G_s=\T^{s-1}$.  Then $(\wt X,\wt T)$ is an $(s-1)$-step affine nilsystem, given by the matrix $\wt M$ induced by $M$, and the translation by $\wt \alpha$, the image of $\alpha$ in $\T^{s-1}$.

Since $1\leq r\leq s-1$, by the induction hypothesis,   there exist arbitrarily large  $n\in R$ and $\wt x\in\T^{s-1}$ with $\norm{\wt x}\leq\ve$ and $\norm{\wt T^{kn}\wt x}\leq\ve$ for $1\leq k\leq r$. Lifting to $X$, 
there exist 
$x\in\T^{s}$ and $w_1,\dots,w_r\in G_s$ with $w_k = (w_{k,1}, \ldots, w_{k,s})$ for $j=1, \ldots r$ such that
$$
\norm x\leq \ve\text{ and }\norm{T^{kn}x- w_k}\leq \ve\text{ for }1\leq k\leq r.
$$

We need to show that if $n$ is sufficiently large, there exists $y\in G_{s-r}$ such that 
$$
\norm y\leq C\ve \text{ and }\norm{T^{k n}(x+y)}\leq C\ve \text{ for }1\leq k\leq r.
$$

For any $k\in\N$, we have that $T^{k n}(x+y)=T^{k n}x+M^{k n}y$ and so the system of approximate equations to be solved is 
\begin{gather*}
 y\in G_{s-r},\ 
\norm y\leq C\ve;\\
\norm{M^{k n}y+w_k}\leq C\ve\text{ for }1\leq k\leq r.
\end{gather*}
Set $v_k = -\sum_{j=1}^k \binom{k}{j} (-1)^{k-j} w_j\in G_s$ for $k=1, \ldots, r$. 
Then the system we need to solve becomes
\begin{gather}
\label{eq:sys0} y\in G_{s-r};\\
\label{eq:sys1} \norm y\leq C\ve;\\
\label{eq:sys2}
\norm{(M^{n}-\id)^k y-v_k}\leq C\ve \text{ for }1\leq k\leq r.
\end{gather}

By Lemma~\ref{lem:A} and by induction on $\ell$, for $1\leq k\leq r$ and  $1\leq \ell\leq s-1$, the   elements $A^\ell v_k$ satisfy
\begin{gather}
\notag
(M^k-\id)^\ell A^\ell v_k=v_k;\\
\notag
A^\ell v_k\in G_{s-\ell};\\
\label{eq:lemA}
(A^\ell v_k)_1=0\text{ and }|(A^\ell v_k)_i|\leq C/n^{i-1}\text{ for }2\leq i\leq s.
\end{gather}
Define $y_k\in G$ by 
$$
y_k=A^k v_k\text{ for }1\leq k \leq r\text{ and }y=y_1+\dots+y_r.
$$
Then we claim that for $n$ sufficiently large, $y$ satisfies conditions~\eqref{eq:sys0}, \eqref{eq:sys1} and~\eqref{eq:sys2}.

To see this, by construction, for $1\leq k \leq r$, $y_k\in G_{s-k}\subset G_{s-r}$ and~\eqref{eq:sys0} is satisfied. 

By~\eqref{eq:lemA}, all coordinates of $y_k$ are bounded in absolute value by $C/n$, and thus $\norm{y_k}\leq C/n$. It follows that $\norm y\leq C/n$ and that~ \eqref{eq:sys1} is satisfied when $n$ is sufficiently large.

Furthermore, for $1\leq k\leq r$,
\begin{multline*}
(M^n-\id)^k y \\
\begin{aligned}
=&\sum_{\ell=1}^{k-1} (M^n-\id)^k y_\ell &+& (M^n-\id)^k y_k&+&
\sum_{\ell=k+1}^r(M^n-\id)^k y_\ell.\\
=&S_1&+&S_2&+&S_3.
\end{aligned}
\end{multline*}
We analyze these three terms.  
For $1\leq \ell<k$, since $y_\ell\in G_{s-\ell}$, we have that $(M^n-\id)^k y_\ell=0$ and thus $S_1=0$.
By construction, $S_2=(M^n-\id)^kA^kv_k=v_k$.
 
For $k<\ell \leq r$, $(M^n-\id)^ky_\ell=(M^n-\id)^kA^\ell v_\ell=A^{\ell-k}v_\ell$ and by~\eqref{eq:lemA} all coordinates of this element are bounded by $C/n$ and thus $\norm{(M^n-\id)^ky_\ell}\leq C/n$. It follows that
$\norm{S_3}\leq C/n$, and~\eqref{eq:sys2} holds when $n$ is sufficiently large.

For generalizing to the case where there may be more blocks in the Jordan matrix $M$, we note that the proof applies for all sufficiently large $n\in R$.  Thus taking $n$ to be the maximum of these iterates, we deduce the 
general case. 
 \end{proof}
 
\begin{figure}[h]
\label{fig:main} 
\begin{displaymath}
\xymatrix{ \ar^>{{\bf \ell-{\rm recurrence}}}@<-1ex>[6,0];[]& \vdots & \vdots & \vdots & & \vdots &  \\
s-& \bullet \ar@2{->}[u]\ar@{->}[r]|{\SelectTips{cm}{}\object@{/}}|{} & \bullet \ar@2{->}[u]\ar@{->}[r] |{\SelectTips{cm}{}\object@{/}}|{}& \bullet \ar@2{->}[u] \ar@{->}[r]|{\SelectTips{cm}{}\object@{/}}|{} & \cdots  \ar@{->}[r]|{\SelectTips{cm}{}\object@{/}}|{} &  \bullet \ar@2{->}[u]\ar@2{~>}[r] & \cdots  \\
& \vdots \ar@2{->}[u]\ar@{->}[r] |{\SelectTips{cm}{}\object@{/}}|{}& \vdots \ar@2{->}[u]\ar@{->}[r] |{\SelectTips{cm}{}\object@{/}}|{}& \vdots \ar@2{->}[u]\ar@{->}[r]|{\SelectTips{cm}{}\object@{/}}|{} & \cdots  \ar@2{~>}[r] &  \iddots \ar@{->}[u]|{\SelectTips{cm}{}\object@{/}}|{}\ar@2{~>}[r] & \cdots  \\
3- & \bullet \ar@2{->}[u]\ar@{->}[r] |{\SelectTips{cm}{}\object@{/}}|{}& \bullet \ar@2{->}[u]\ar@{->}[r] |{\SelectTips{cm}{}\object@{/}}|{}& \bullet \ar@2{->}[u]\ar@2{~>}[r] & \cdots  \ar@2{~>}[r] &  \bullet \ar@{->}[u]|{\SelectTips{cm}{}\object@{/}}|{}\ar@2{~>}[r] & \cdots  \\
2-& \bullet \ar@2{->}[u]\ar@{->}[r] |{\SelectTips{cm}{}\object@{/}}|{}& \bullet \ar@2{->}[u]\ar@2{~>}[r] & \bullet \ar@{->}[u]|{\SelectTips{cm}{}\object@{/}}|{}\ar@2{~>}[r] & \cdots  \ar@2{~>}[r] &  \bullet \ar@{->}[u]|{\SelectTips{cm}{}\object@{/}}|{}\ar@2{~>}[r] & \cdots  \\
1-& \bullet \ar@2{->}[u]\ar@2{->}[r] & \bullet \ar@{->}[u]|{\SelectTips{cm}{}\object@{/}}|{} \ar@2{->}[r] & \bullet \ar@{->}[u]|{\SelectTips{cm}{}\object@{/}}|{}\ar@2{->}[r] & \cdots  \ar@2{->}[r] &  \bullet \ar@{->}[u]|{\SelectTips{cm}{}\object@{/}}|{}\ar@2{->}[r] & \cdots  \\
&\stackrel{\mid}{1} & \stackrel{\mid}{2} & \stackrel{\mid}{3} &  & \stackrel{\mid}{s}  &   \ar_>{{\bf s-{\rm step}}}@<1ex>[0,-6];[] 
}
\end{displaymath}

\caption{The horizontal axis represents the step of the nilsystem and the vertical 
axis represents the level of recurrence.  The vertical implications are proven in~Corollary \ref{cor:mmultiple-rec-nil} 
and counterexamples for vertical  implications (with step greater than recurrence) are given by Example~\ref{example:FLW}.  Horizontal 
squiggly implications are proven only for affine nilsystems 
in Theorem~\ref{th:lift-affine}, while the full horizontal implications are proven in Theorem~\ref{th:rec-nil}.  
Counterexamples for the horizontal implications (with recurrence greater than step) follow from Corollary~\ref{cor:mmultiple-rec-nil} and Corollary~\ref{cor:Nxrecir}.  
}
\end{figure}
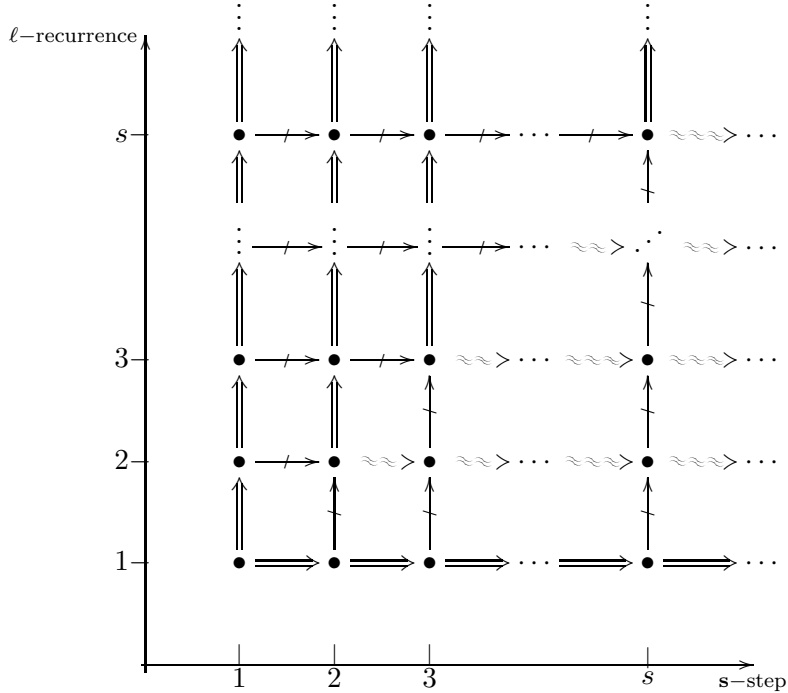

\section{The Ramsey property}
\label{sec:Ramsey}

\begin{definition}
A property is {\em Ramsey} if for any set $R\subset\N$ having this property and any partition $R = A\cup B$, 
at least one of $A$ or $B$ has this property.  
\end{definition}
The Ramsey property is also sometimes referred to as  {\em divisible}; an equivalent characterization 
is that its dual is a filter (see~\cite{furstenberg, glasner, BG}). 

The following proposition appears in several places in the literature (see for example~\cite[Proposition 7.2.4]{HSY}), but for completeness we give a proof: 
\begin{proposition}
\label{prop:ramsey}
The family of sets of $\ell$-recurrence has the Ramsey property.  
 \end{proposition}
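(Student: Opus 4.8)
I will prove the Ramsey property for sets of $\ell$-recurrence using the Furstenberg Correspondence formulation, characterization~\eqref{it:largel} of Theorem~\ref{th:mult-top-recur}: a set $R$ is a set of $\ell$-recurrence if and only if every finite partition of $\N$ has a cell containing an arithmetic progression of length $\ell+1$ with common difference in $R$. Suppose $R$ is a set of $\ell$-recurrence and $R = A \cup B$; I want to show that $A$ or $B$ is also a set of $\ell$-recurrence. The natural strategy is proof by contradiction: if neither $A$ nor $B$ is a set of $\ell$-recurrence, then by~\eqref{it:largel} there is a finite partition $\N = C_1 \cup \dots \cup C_p$ witnessing the failure for $A$ (no cell contains an $(\ell+1)$-term progression with common difference in $A$) and a finite partition $\N = D_1 \cup \dots \cup D_q$ witnessing the failure for $B$. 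Then I take the common refinement $\N = \bigcup_{i,j} (C_i \cap D_j)$, a finite partition. Any $(\ell+1)$-term arithmetic progression lying in a single cell $C_i \cap D_j$ lies both in $C_i$ and in $D_j$; hence its common difference lies neither in $A$ nor in $B$, so it does not lie in $R = A \cup B$. This contradicts the fact that $R$ is a set of $\ell$-recurrence applied to the refined partition, completing the proof.

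Alternatively — and this is probably the cleaner write-up — one can argue dynamically using characterization~\eqref{it:top-rec2l}. If neither $A$ nor $B$ is a set of $\ell$-recurrence, pick a minimal system $(X,T)$ and nonempty open $U \subset X$ with $N^\ell_T(U) \cap A = \emptyset$, and a minimal system $(Y,S)$ and nonempty open $V \subset Y$ with $N^\ell_S(V) \cap B = \emptyset$. Consider the product system $(X \times Y, T \times S)$, let $(W, T\times S)$ be a minimal subsystem meeting the nonempty open set $U \times V$, and let $U' = (U\times V)\cap W$, which is nonempty and open in $W$. For $n \in N^\ell_{T\times S}(U')$ we have $n \in N^\ell_T(U)$ and $n \in N^\ell_S(V)$, hence $n \notin A$ and $n \notin B$, so $n \notin R$; thus $N^\ell_W(U') \cap R = \emptyset$, contradicting that $R$ is a set of $\ell$-recurrence for the minimal system $(W, T\times S)$. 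One minor point to check is that $U \times V$ really meets some minimal subsystem of the product, which follows since every point of a compact system has its orbit closure containing a minimal set, intersected with the observation that $U\times V$ being open and nonempty, and the product system having the nonempty open set as a valid target — more precisely, since $U\times V\neq\emptyset$, pick a point in it, pass to a minimal subset of its orbit closure, and note that by minimality this minimal set meets every nonempty open subset of itself, but we actually only need that $W\cap(U\times V)\neq\emptyset$, which holds if we choose the point in $U\times V$ and $W$ to be a minimal subset of its orbit closure—this gives $W\subset\overline{\mathrm{orb}(x,y)}$ but not necessarily $(x,y)\in W$. To fix this cleanly one instead uses that a minimal subsystem $W$ of $X\times Y$ meeting $U\times V$ exists: take any minimal $W$, and since the projection of $W$ to $X$ is all of $X$ (as $X$ is minimal) — hmm, this needs care.

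**The subtlety.** The honest obstacle in the dynamical argument is exactly this: given nonempty open $U\subset X$ and $V\subset Y$, is there a minimal subsystem of $X\times Y$ meeting $U\times V$? This is not automatic. The safe route is to replace the product formulation by working with a single point: by~\eqref{it:returnx3l} or~\eqref{it:returnxl}, non-$\ell$-recurrence of $A$ gives a system and an $\varepsilon$ and effectively a configuration that $A$ misses; but cleanest of all is to simply use the \emph{combinatorial} characterization~\eqref{it:largel}, where the refinement argument is completely elementary and has no such gap. So my actual plan is: state that we use~\eqref{it:largel}, take witnessing finite partitions for the failure of $A$ and of $B$, pass to their common refinement, and observe that this refined finite partition witnesses the failure of $R$ — contradiction. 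This is short, rigorous, and avoids all point-set topology subtleties. I expect the main (and only) thing to get right is the bookkeeping that a progression inside a refined cell lies in cells of both original partitions and hence its difference avoids both $A$ and $B$; that is immediate.

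Here is the proof.

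\begin{proof}
We use characterization~\eqref{it:largel} of Theorem~\ref{th:mult-top-recur}: a set $S\subset\N$ is a set of $\ell$-recurrence if and only if for every finite partition of $\N$, some cell contains an arithmetic progression of length $\ell+1$ whose common difference lies in $S$.

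Let $R\subset\N$ be a set of $\ell$-recurrence and let $R=A\cup B$. Suppose, for contradiction, that neither $A$ nor $B$ is a set of $\ell$-recurrence. Then there is a finite partition $\N=C_1\cup\dots\cup C_p$ such that no cell $C_i$ contains an arithmetic progression of length $\ell+1$ whose common difference belongs to $A$, and there is a finite partition $\N=D_1\cup\dots\cup D_q$ such that no cell $D_j$ contains an arithmetic progression of length $\ell+1$ whose common difference belongs to $B$.

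Consider the common refinement, the finite partition of $\N$ into the nonempty sets among $C_i\cap D_j$ for $1\leq i\leq p$ and $1\leq j\leq q$. Let $P$ be an arithmetic progression of length $\ell+1$ contained in one cell $C_i\cap D_j$ of this refinement. Then $P\subset C_i$, so the common difference of $P$ does not belong to $A$; and $P\subset D_j$, so the common difference of $P$ does not belong to $B$. Hence the common difference of $P$ does not belong to $A\cup B=R$.

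Thus no cell of the refined partition contains an arithmetic progression of length $\ell+1$ whose common difference belongs to $R$, contradicting the assumption that $R$ is a set of $\ell$-recurrence. Therefore $A$ or $B$ is a set of $\ell$-recurrence.
\end{proof}
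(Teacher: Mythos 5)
Your proof is correct, but it takes a different route from the paper. You use the combinatorial characterization~\eqref{it:largel} of Theorem~\ref{th:mult-top-recur} and the common-refinement trick: witnessing partitions for the failure of $A$ and of $B$ are refined into a single partition witnessing the failure of $R=A\cup B$. This is elementary and rigorous, but it leans on the full strength of the equivalence~\eqref{it:top-rec2l}$\Leftrightarrow$\eqref{it:largel}, which is itself established via the topological Furstenberg correspondence. The paper instead runs the dynamical product argument that you sketched and then abandoned: it takes systems $(X,T)$, $(Y,S)$ with open sets $U$, $V$ witnessing $N_T^\ell(U)\cap A=\emptyset$ and $N_S^\ell(V)\cap B=\emptyset$, picks a minimal subset $Z$ of $X\times Y$ under $T\times S$, and resolves exactly the subtlety you flagged --- that $Z$ need not meet $U\times V$ --- by observing that the $\Z^2$-action generated by $T\times\mathrm{id}$ and $\mathrm{id}\times S$ on $X\times Y$ is minimal (since $X$ and $Y$ are), so some translate $(T^n\times S^m)Z$, which is again $(T\times S)$-minimal, meets the open set $U\times V$; one then applies $\ell$-recurrence of $R$ to that translate. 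Your instinct that the naive product argument has a gap was right, and your combinatorial fix is a legitimate alternative; the paper's fix stays entirely inside characterization~\eqref{it:top-rec2l} and so does not invoke the correspondence principle. Either proof is acceptable.
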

\begin{proof}
We proceed by contradiction. Assume that $R$ is a set of $\ell$-recurrence and that $R =A\cup B$ 
is a partition such that neither $A$ nor $B$ is a set of $\ell$-recurrence.  
Thus there exist two minimal systems $(X,T)$ and $(Y,S)$ and open sets $U\subset  X$ and $V\subset  Y$ 
such that $N_T^\ell(U)\cap A=\emptyset$ and $N_S^\ell(V)\cap B=\emptyset$.  
Let $Z$ be a minimal subset of the product $X\times Y$.  By minimality of the $\Z^2$-action of $\{T^n\times S^m\colon n,m\in\Z\}$ on $X\times Y$, we can choose $n,m\in \Z$  such that 
$$Z' = (T^n\times S^m)Z\cap(U\times V)\neq\emptyset.
$$
Then $Z'$ is a nonempty open set and so by assumption, 
$N^\ell_{T\times S}(Z')\cap R\neq\emptyset$.  
But $N^\ell_{T\times S}(Z')\subset  N_S^\ell(V)\cap N_T^\ell(U)$, a contradiction that 
$R=A\cup B$. 
\end{proof}

\begin{corollary}
Let $R$ be a set of $\ell$-recurrence, $(X,T)$ a minimal system, and $U$ be a nonempty open subset of $X$. Then $R\cap N^\ell(U)$ is a set of 
$\ell$-recurrence.
\end{corollary}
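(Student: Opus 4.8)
The plan is to deduce the statement directly from the Ramsey property established in Proposition~\ref{prop:ramsey}; once that proposition is available, the argument is essentially a two-line manipulation of the definitions.

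First I would record the following observation: the set $N^\ell(U)$ has nonempty intersection with every set of $\ell$-recurrence. Indeed, if $R'$ is any set of $\ell$-recurrence, then applying characterization~\eqref{it:top-rec2l} of Theorem~\ref{th:mult-top-recur} to the minimal system $(X,T)$ and the nonempty open set $U$ produces some $n\in R'$ with $n\in N^\ell(U)$; hence $R'\cap N^\ell(U)\neq\emptyset$. In other words, $N^\ell(U)$ belongs to the dual of the family of sets of $\ell$-recurrence.

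Now consider the partition
$$
R=\bigl(R\cap N^\ell(U)\bigr)\cup\bigl(R\setminus N^\ell(U)\bigr).
$$
Since $R$ is a set of $\ell$-recurrence, Proposition~\ref{prop:ramsey} guarantees that at least one of the two cells is itself a set of $\ell$-recurrence. But the cell $R\setminus N^\ell(U)$ is disjoint from $N^\ell(U)$, so by the observation above it cannot be a set of $\ell$-recurrence. Therefore $R\cap N^\ell(U)$ is a set of $\ell$-recurrence, as claimed.

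There is no serious obstacle here: the content is entirely carried by Proposition~\ref{prop:ramsey}. For completeness one could also give a self-contained proof avoiding the Ramsey property. Given a minimal system $(Y,S)$ and a nonempty open set $V\subseteq Y$, choose a minimal subset $W$ of $X\times Y$ under $T\times S$. Since the projections of $W$ to $X$ and to $Y$ are onto and $W$ is minimal, there exists $k\in\Z$ such that $W\cap\bigl(U\times S^kV\bigr)$ is nonempty, hence a nonempty open subset of $W$. Applying the $\ell$-recurrence of $R$ to the minimal system $(W,T\times S)$ and this open set yields $n\in R$ with $n\in N^\ell_{T\times S}\bigl(U\times S^kV\bigr)=N^\ell_T(U)\cap N^\ell_S(S^kV)$; since translates have the same return set, $N^\ell_S(S^kV)=N^\ell_S(V)$, so $n\in R\cap N^\ell_T(U)$ and $n\in N^\ell_S(V)$, which shows $R\cap N^\ell(U)$ is a set of $\ell$-recurrence.
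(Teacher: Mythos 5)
Your main argument is exactly the paper's proof: observe that any set disjoint from $N^\ell(U)$ fails characterization~\eqref{it:top-rec2l} and hence is not a set of $\ell$-recurrence, so the Ramsey property from Proposition~\ref{prop:ramsey} forces $R\cap N^\ell(U)$ to be one. The self-contained product argument you append is also correct, but it merely re-runs the proof of Proposition~\ref{prop:ramsey} in this special case, so it adds nothing essential.
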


\begin{proof}
By definition $\N\setminus N^\ell(U)$ is not a set of $\ell$-recurrence. Thus $R\setminus N^\ell(U)$ is not a set of $\ell$-recurrence. The result follows from Proposition~\ref{prop:ramsey}.
\end{proof}

In particular, it follows from this corollary and Theorem~\ref{th:HSYA} that if $R$ is a set of $\ell$-recurrence and $E$ is a \nildbohrzero set, then $E\cap R$ is a set of $\ell$-recurrence.

Similarly, one can easily check that 
by passing to the product, a set of pointwise recurrence for minimal, distal systems is Ramsey: 
\begin{proposition}
The family of sets of pointwise  recurrence for minimal distal systems has the Ramsey property: if $A$ and $B$ are subsets of $\N$ such that $A\cup B$ is a set of  pointwise  recurrence for minimal distal systems, then at least one of the sets $A$ or $B$
is a set pointwise  recurrence for minimal distal systems
\end{proposition}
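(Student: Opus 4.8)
The plan is to mimic the contradiction argument used for Proposition~\ref{prop:ramsey}, replacing the passage to a minimal subset of a product by the stronger fact that, in a distal system, the closed orbit of \emph{every} point is already minimal.

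Suppose, for contradiction, that $A\cup B$ is a set of pointwise recurrence for minimal distal systems while neither $A$ nor $B$ is. Then there exist a minimal distal system $(X,T)$, a point $x_0\in X$, and $\varepsilon_X>0$ with $d(T^nx_0,x_0)\geq\varepsilon_X$ for all $n\in A$, and likewise a minimal distal system $(Y,S)$, a point $y_0\in Y$, and $\varepsilon_Y>0$ with $d(S^ny_0,y_0)\geq\varepsilon_Y$ for all $n\in B$.

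Next I would form the product system $(X\times Y,T\times S)$, which is distal since a product of distal systems is distal, and let $W$ be the closed $(T\times S)$-orbit of $(x_0,y_0)$. Because every point of a distal system is almost periodic, $W$ is minimal; thus $(W,T\times S)$ is a minimal distal system containing $(x_0,y_0)$. Applying the hypothesis on $A\cup B$ to this system and to the point $(x_0,y_0)$ yields a sequence $(n_i)$ in $A\cup B$ with $(T\times S)^{n_i}(x_0,y_0)\to(x_0,y_0)$, hence $d(T^{n_i}x_0,x_0)\to 0$ and $d(S^{n_i}y_0,y_0)\to 0$.

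For each large $i$, however, $n_i$ lies in $A$ or in $B$: in the first case $d(T^{n_i}x_0,x_0)\geq\varepsilon_X$, in the second $d(S^{n_i}y_0,y_0)\geq\varepsilon_Y$. In either case one of these two quantities remains at least $\min(\varepsilon_X,\varepsilon_Y)>0$, contradicting the two convergences above. The only step requiring more than routine care is the assertion that the closed orbit of a point in a distal system is minimal; this is classical (every point of a distal system is almost periodic, see~\cite{Aus, furstenberg}), and the rest is a direct transcription of the proof of Proposition~\ref{prop:ramsey}. The main obstacle is simply recognizing that the minimality of $W$ comes for free from distality, which is exactly what allows the point $(x_0,y_0)$ itself — not merely some point of a minimal subset — to be fed into the recurrence hypothesis.
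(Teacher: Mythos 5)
Your proof is correct and follows essentially the same route as the paper: pass to the product system, use distality to conclude that the closed orbit of $(x_0,y_0)$ is a minimal distal system, and apply the pointwise recurrence hypothesis at that very point to contradict the two lower bounds. The only cosmetic differences are your use of two separate constants $\varepsilon_X,\varepsilon_Y$ and a sequence of return times instead of a single $n$.
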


\begin{proof}
We  assume  by contradiction that there exist two distal minimal systems $(X,T)$ and $(Y,S)$, $x\in X$, $y\in Y$ and $\ve >0$ such that
$$
\text{for every }n\in A,\ d_X(T^nx,x)\geq\ve\ ;\
\text{for every }n\in B,\ d_Y(T^ny,y)\geq\ve.
$$
Let $X\times Y$ be endowed with the sum distance $d_{X\times Y}((x,y),(x',y'))=d_X(x,x')+d_Y(y,y')$.
Since $X$ and $Y$ are distal, the closed $(T\times S)$-orbit $W$ of $(x,y)$ in $X\times Y$ is minimal. Since $A\cup B$ is a set of  pointwise recurrence for minimal distal systems, there exists $n\in A\cup B$ such that
$$
\ve> d_{X\times Y}((T\times S)^n(x,y),(x,y))=
d_X(T^nx,x)+d_Y(S^ny,y),
$$
a contradiction.
\end{proof}
\begin{question} Does the family of sets of pointwise (or multiple or simultaneous) topological recurrence have the Ramsey property?
\end{question}

\section{Large sets and syndetic large sets}
\label{sec:large}

\subsection{Fixing the number of colors}
In the definition of a set of recurrence, we 
consider an arbitary, finite partition of the integers 
and arithmetic progressions of arbitary length.  
Restricting the length of the progression leads to the 
definition of $\ell$-recurrence.  Instead, we can 
restrict the number of cells in the partition and 
this is the point of 
view taken in Brown, Graham, and Landman~\cite{BGL}, where this is studied from a purely combinatorial point of view.  
They define: 
\begin{definition}
If $r\geq 2$ is an integer, a set $R\subset \N$ is {\em  $r$-large} if every coloring of the integers with $r$ colors 
contains arbitrarily long monochromatic progressions with step in $R$.  The set $R\subset \N$ is {\em large} if it is $r$-large for every $r\geq 2.$
\end{definition}

Analogous to Theorem~\ref{th:rec}, this property can be described dynamically: 
\begin{proposition}
\label{prop:large2}
Let $r\geq 2$. The set $R\subset  \N$ is $r$-large if and only if 
for every system $(X,T)$, every open cover ${\mathcal U}=(U_1,\ldots, U_r)$ of $X$ by $r$ open sets, and every $\ell\geq 2$, there exist $j\in\{1,\dots,r\}$ and $n\in R$ such that $n\in N^\ell(U_j)$.
\end{proposition}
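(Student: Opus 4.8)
The strategy is to mirror the proof of the equivalence \eqref{it:top-rec1l} $\Longleftrightarrow$ \eqref{it:largel} in Theorem~\ref{th:mult-top-recur}, the only change being that one must keep the number of open sets in the cover equal to the number of colors in the coloring, namely $r$; this is precisely why only partitions, and not sets of positive density, can appear on the combinatorial side. I would prove the two implications separately.

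Assume first that $R$ is $r$-large, and fix a system $(X,T)$, an open cover $\CU=(U_1,\dots,U_r)$ of $X$, and $\ell\geq 2$. Choose any $x\in X$ and define $c\colon\N\to\{1,\dots,r\}$ by letting $c(m)$ be the least index $j$ with $T^mx\in U_j$; this is well defined because $\CU$ covers $X$, so $c$ is a coloring of $\N$ by $r$ colors. By $r$-largeness there is a monochromatic arithmetic progression $a,a+n,\dots,a+\ell n$ with common difference $n\in R$, of some color $j$ (an arithmetic progression of length greater than $\ell+1$ has an initial segment of length exactly $\ell+1$ with the same common difference, so ``arbitrarily long'' does supply one of the right length). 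Then $T^{a+kn}x\in U_j$ for $0\leq k\leq\ell$, hence $T^ax\in U_j\cap T^{-n}U_j\cap\dots\cap T^{-\ell n}U_j$, i.e.\ $n\in N^\ell(U_j)$, as needed.

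For the converse, I would start with an arbitrary coloring $c\colon\N\to\{1,\dots,r\}$ and, for fixed $\ell\geq 2$, invoke the topological Furstenberg Correspondence Principle, exactly as in the proof of Theorem~\ref{th:mult-top-recur}. Concretely: inside $\Omega=\{1,\dots,r\}^{\Z}$ with the shift $\sigma$, extend $(c(m))_{m\geq 1}$ to a two-sided point $\omega$ in any way, let $X$ be the $\omega$-limit set of $\omega$ under $\sigma$ (a closed, $\sigma$-invariant subset of $\Omega$, hence a system), and set $U_j=\{z\in X\colon z(0)=j\}$ for $1\leq j\leq r$. The $U_j$ are clopen and cover $X$, and some may be empty, which is harmless since they still form an open cover by $r$ sets. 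Applying the dynamical hypothesis to $(X,\sigma|_X)$, to this cover, and to $\ell$ yields $j$ and $n\in R$ and a point $z\in X$ with $z(kn)=j$ for $0\leq k\leq\ell$. By definition of the $\omega$-limit set, for every $N$ there is $m\geq N$ with $\omega(m+kn)=z(kn)=j$ for $0\leq k\leq\ell$; taking $N\geq 1$ and recalling that $\omega$ agrees with $c$ on $\N$, we obtain $a:=m\in\N$ with $c(a)=c(a+n)=\dots=c(a+\ell n)=j$, a monochromatic arithmetic progression of length $\ell+1$ with common difference in $R$. Since $\ell$ is arbitrary and progressions of length at most $2$ are trivial, $R$ is $r$-large.

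I do not expect a genuine obstacle here: the substance is the Correspondence Principle, which is already used in Theorem~\ref{th:mult-top-recur}. The two points needing a little care are keeping the clopen partition to exactly $r$ cells (permitting empty ones) so as to match ``$r$ open sets'' in the statement, and transporting the abstract recurrence found in $(X,T)$ back to a progression genuinely inside $\N$ rather than inside the arbitrary two-sided extension of $c$ — which is exactly what working with the $\omega$-limit set, rather than a full orbit closure, is designed to achieve.
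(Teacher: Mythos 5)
Your proof is correct, and it follows exactly the route the paper intends: the paper omits the proof of this proposition, pointing only to the analogy with Theorem~\ref{th:mult-top-recur}, and your argument is the straightforward adaptation of the equivalence between the open-cover and coloring formulations there, with the number of cells held fixed at $r$. Your use of the $\omega$-limit set (rather than the two-sided orbit closure) to guarantee that the recurrence transports back to a progression inside $\N$ is a careful touch that correctly handles a point the paper's sketch of the Correspondence Principle glosses over.
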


In particular, a set of integers is a set of multiple recurrence if and only if it is $r$-large for every $r$, meaning it is large.

Thus,  a question asked in~\cite{BGL} becomes: 
\begin{question}[Brown, Graham, and Landman~\cite{BGL}]
Are all $2$-large sets sets of  multiple recurrence?
\end{question}

We rephrase some of the other results from~\cite{BGL}, with some minor modifications, putting them into dynamical language.  Their example~\ref{example:FLW} becomes: 
\begin{lemma}
\label{lem:not2large}
Let $\alpha\in\T$ and $\ve>0$.
The set $S=\{n\in\N\colon\norm{n\alpha}>\ve\}$ is not $2$-large.
\end{lemma}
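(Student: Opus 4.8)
The plan is to refute $2$-largeness directly by producing an explicit $2$-coloring of $\N$ together with an integer $L=L(\ve)$ such that no monochromatic arithmetic progression of length $L$ has common difference in $S$. (We may assume $0<\ve<1/2$, since otherwise $S=\emptyset$ and the claim is trivial.) The coloring to use is the one that records which half of the circle $n\alpha$ falls into: put $c(n)=0$ if $\{n\alpha\}\in[0,1/2)$ and $c(n)=1$ if $\{n\alpha\}\in[1/2,1)$, where $\{\,\cdot\,\}$ denotes the fractional part. The underlying idea is that the map $n\mapsto n\alpha$ sends an arithmetic progression of step $d$ to an arithmetic progression in $\T$ with common difference $d\alpha$, and if $d\in S$ then this $\T$-progression advances by $\norm{d\alpha}>\ve$ at each step, so it cannot stay inside an arc of length $1/2$ for more than about $1/(2\ve)$ steps.

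To turn this into a proof I would first isolate the elementary fact that any two points of a half-open arc $H\in\{[0,1/2),[1/2,1)\}$ have difference lying in $(-1/2,1/2)$ when represented in $[0,1)$. Then, assuming $a,a+d,\dots,a+(L-1)d$ is monochromatic with $d\in S$, I would write $u_j=\{(a+jd)\alpha\}$ and note that all the $u_j$ lie in one such arc $H$, while $u_{j+1}-u_j\equiv d\alpha\pmod 1$ is a single fixed element of $\T$ independent of $j$. Combining these two facts forces $u_{j+1}-u_j$ to equal one fixed real number of absolute value $\gamma:=\norm{d\alpha}$ (the degenerate case $\gamma=1/2$ being even easier, since then no two consecutive terms can already be monochromatic). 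Hence $|u_{L-1}-u_0|=(L-1)\gamma\ge (L-1)\ve$, which is incompatible with $u_0,u_{L-1}\in H$ once $(L-1)\ve\ge 1/2$. Taking $L=\lceil 1/(2\ve)\rceil+2$ therefore yields the desired coloring and length, so $S$ is not $2$-large.

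I do not expect a substantial obstacle; the only care needed is at the arc boundaries. For irrational $\alpha$ the orbit $\{n\alpha\}$ never hits $1/2$ and hits $0$ only at $n=0$, so no ambiguity in the coloring arises and the forcing step goes through verbatim. For rational $\alpha=p/q$ one checks that a term landing exactly on $0$ or on $1/2$ does not affect the conclusion $u_{j+1}-u_j=\pm\gamma$, so the same coloring still works without modification. The essence is purely the one-dimensional geometric observation that a progression on the circle whose gap is bounded below by $\ve$ leaves every half-arc within roughly $1/(2\ve)$ steps; everything else is bookkeeping.
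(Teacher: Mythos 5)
Your proof is correct and follows essentially the same route as the paper: the same two-coloring by the halves of the circle that $n\alpha$ falls into, and the same observation that a progression on $\T$ with gap of norm greater than $\ve$ cannot remain in a half-arc for more than about $1/(2\ve)$ steps. The paper implements the "no wraparound" step by normalizing the gap to lie in $[0,1/2]$ and checking $a_k+\beta<1$, while you invoke uniqueness of the representative of $d\alpha$ in $(-1/2,1/2)$; these are the same idea in slightly different packaging.
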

\begin{proof}
Let  $\alpha\in\T$,  $J_1=[0,1/2)$ and $J_2=[1/2,1)$. We define a $2$-coloring of $\N$ by $C_j=\{n\in\N\colon n\alpha\in J_j\}$ for  $j=1,2$. Let 
$\ell=1+\lceil 1/2\ve\rceil$. 
We show that there is no monochromatic progression of length $\ell$  and with common difference $n\in R$. Assume, by contradiction, that such a progression $P=\{a+in\colon 0\leq i\leq\ell-1\}$ exists.

Choose $\beta\in(-1/2,1/2]$ such that $\beta=n\alpha\bmod 1$. Without loss of generality, we can assume that $0\leq\beta\leq 1/2$ and that $P\subset  C_1$.
For $0\leq k<\ell$, let $a_k=a\alpha+k\beta\bmod 1$. Then the set $X=\{a_k\colon 0\leq k<\ell\}$ is contained in $J_1$. 
For $0\leq k<\ell-1$, we have that $a_{k+1}=a_k+\beta\bmod 1$.  On the other hand,  $0\leq a_{k}<1/2$ and $0\leq\beta\leq 1/2$ and thus $0\leq a_k+\beta<1$. We deduce that
$$
a_{k+1}=a_k+\beta\text{ for }0\leq k<\ell-1.
$$
Therefore $a_{\ell-1}=a_0+(\ell-1)\beta$, and thus  $\beta=(a_{\ell-1}-a_0)/(\ell-1)\leq 1/2(\ell-1)<\ve$, a contradiction.
\end{proof}
 
In analogy with  Proposition~\ref{prop:ramsey}, the characterization in Proposition~\ref{prop:large2}  of large sets 
leads to a dynamical proof for the following: 
\begin{proposition}[Brown, Graham, and Landman~\cite{BGL}]
\label{prop:ramsey-large}
If $r_1,r_2\geq 2$ and $S_1\cup S_2$ is $r_1r_2$-large, then some $S_i$ is $r_i$-large for $i=1,2$.
\end{proposition}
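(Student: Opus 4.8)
The plan is to argue by contradiction, in close analogy with the proof of Proposition~\ref{prop:ramsey}, but using the dynamical characterization of largeness in Proposition~\ref{prop:large2} in place of the characterization of recurrence; in particular no passage to a minimal subsystem is needed here, since Proposition~\ref{prop:large2} already quantifies over all systems and all open covers of the prescribed cardinality.

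Suppose that $S_1$ is not $r_1$-large and that $S_2$ is not $r_2$-large. By Proposition~\ref{prop:large2}, there exist a system $(X,T)$, an open cover $\CU=(U_1,\dots,U_{r_1})$ of $X$ by $r_1$ sets, and an integer $\ell_1\geq 2$ such that $n\notin N_T^{\ell_1}(U_j)$ for every $j\in\{1,\dots,r_1\}$ and every $n\in S_1$; likewise there exist a system $(Y,S)$, an open cover $\CV=(V_1,\dots,V_{r_2})$ of $Y$ by $r_2$ sets, and an integer $\ell_2\geq 2$ with $n\notin N_S^{\ell_2}(V_k)$ for every $k\in\{1,\dots,r_2\}$ and every $n\in S_2$. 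Put $\ell=\max(\ell_1,\ell_2)$. Since $N^\ell(U)\subseteq N^{\ell'}(U)$ whenever $\ell'\leq\ell$, both exclusions remain valid with $\ell$ in place of $\ell_1$ and $\ell_2$.

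Next I would pass to the product system $(X\times Y,\,T\times S)$ together with the open cover consisting of the sets $U_j\times V_k$ for $1\leq j\leq r_1$ and $1\leq k\leq r_2$; this is an open cover of $X\times Y$ by at most $r_1r_2$ sets, which we pad with repetitions to a cover by exactly $r_1r_2$ open sets, an operation that leaves the sets $N^\ell$ unchanged. Since $S_1\cup S_2$ is $r_1r_2$-large, Proposition~\ref{prop:large2} applied with this cover and with $\ell$ yields indices $j,k$ and an integer $n\in S_1\cup S_2$ with $n\in N_{T\times S}^\ell(U_j\times V_k)$. A direct computation gives
$$
N_{T\times S}^\ell(U_j\times V_k)=N_T^\ell(U_j)\cap N_S^\ell(V_k),
$$
because $\bigcap_{i=0}^{\ell}(T\times S)^{-in}(U_j\times V_k)=\bigl(\bigcap_{i=0}^{\ell}T^{-in}U_j\bigr)\times\bigl(\bigcap_{i=0}^{\ell}S^{-in}V_k\bigr)$ is nonempty precisely when both factors are nonempty. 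Hence $n\in N_T^\ell(U_j)\cap N_S^\ell(V_k)$. If $n\in S_1$ this contradicts the choice of $(X,T,\CU)$, and if $n\in S_2$ it contradicts the choice of $(Y,S,\CV)$; either way we obtain a contradiction, which proves the proposition.

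There is essentially no substantive obstacle in this argument: the only points requiring a little care are the monotonicity $N^\ell(U)\subseteq N^{\ell'}(U)$ for $\ell'\leq\ell$ (which lets us work at a single level $\ell\geq 2$), the harmless padding of the product cover up to exactly $r_1r_2$ sets, and the identity for $N^\ell$ of a product of open sets.
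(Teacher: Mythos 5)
Your proof is correct, but it takes a different route from the one the paper actually writes down. Although the paper introduces this proposition as following ``dynamically'' from Proposition~\ref{prop:large2}, its proof is in fact purely combinatorial: assuming each $S_i$ fails to be $r_i$-large, it takes witnessing $r_i$-colorings $\CC_i$ of $\N$ with no monochromatic progression of length $\ell_i$ and step in $S_i$, forms the common refinement $\CC_1\vee\CC_2$ (an $r_1r_2$-coloring), and derives a contradiction from a monochromatic progression of length $\max(\ell_1,\ell_2)$ with step in $S_1\cup S_2$. You instead carry out the genuinely dynamical version: you negate via Proposition~\ref{prop:large2} to get systems $(X,T)$, $(Y,S)$ with bad covers, pass to the product system $(X\times Y, T\times S)$ with the product cover of cardinality $r_1r_2$, and use the identity $N^\ell_{T\times S}(U_j\times V_k)=N^\ell_T(U_j)\cap N^\ell_S(V_k)$. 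The two arguments are exact translates of one another under the correspondence between colorings of $\N$ and open covers of systems (the join of colorings corresponding to the product cover), and both hinge on the same counting $r_1\cdot r_2$. The paper's version is more elementary in that it needs neither Proposition~\ref{prop:large2} nor the product construction; yours has the advantage of mirroring the proof of Proposition~\ref{prop:ramsey} verbatim and of making the ``dynamical proof'' advertised in the text explicit. Your three points of care (monotonicity of $N^\ell$ in $\ell$, padding the product cover to exactly $r_1r_2$ sets, and the product identity for $N^\ell$) are all correctly handled; note only that the padding is not strictly necessary, since an $r$-large set trivially handles covers by fewer than $r$ sets.
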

\begin{proof}
Assume not.  Instead, assume that for $i=1,2$, the set $S_i$ is not $r_i$-large and 
$\CC_i=\{C_{i,1},\ldots, C_{i,r_i}\}$ is an $r_i$-coloring of $\N$ such that there is no
progression of length $\ell_i$, with step in $S_i$ contained in atoms of $\CC_i$.
Let  $\CC_1\vee\CC_2$ be the partition $\{C_{1,j}\cap C_{2,k}\colon 1\leq j\leq r_1,\ 1\leq k\leq r_2\}$ and  set $\ell=\max(\ell_1,\ell_2)$. Then there exists a progression of length $\ell$, with step $d\in S_1\cup S_2$, that is 
monochromatic under the partition $\CC_1\vee\CC_2$. 

But if $d\in S_i$, we have a contradiction of the fact that the progression is monochromatic for $\CC_i$.
\end{proof}

However, we are unable to answer the following:
\begin{question}[Brown, Graham, and Landman~\cite{BGL}]
Does the family of $2$-large sets have the Ramsey property? 
\end{question}

\begin{proposition}
Let  $S\subset \N$ and $r\geq 2$. 
If $d\geq 1$, $E$ is a \bohrzero set of dimension $d$ and $S$ is $2^dr$-large for some $r\in\N$, then $S\cap E$ is $r$-large.
\end{proposition}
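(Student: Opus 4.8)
The plan is to refine the given $r$-coloring using the Bohr data attached to $E$, so that the $2^dr$-largeness of $S$ forces the common difference of the resulting monochromatic progression to land inside $E$. Fix an $r$-coloring $\CC=\{C_1,\dots,C_r\}$ of $\N$ and an integer $\ell\geq 2$; by the definition of $r$-largeness it suffices to produce a $\CC$-monochromatic arithmetic progression of length $\ell$ whose common difference lies in $S\cap E$. Since $E$ is a \bohrzero set of dimension $d$, fix $\alpha_1,\dots,\alpha_d\in\T$ and $\ve>0$ with
$$
B:=\{n\in\N\colon \norm{\alpha_in}<\ve\text{ for }1\leq i\leq d\}\subset E,
$$
and fix an integer $\ell'\geq\ell$ with $\ell'-1>\frac{1}{2\ve}$.

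Next, define a coloring $\CC'$ of $\N$ into $2^dr$ cells by assigning to $n$ the color $(j,b_1,\dots,b_d)$, where $n\in C_j$ and, for each $i$, $b_i=1$ if the representative of $\alpha_in$ in $[0,1)$ lies in $[0,1/2)$ and $b_i=2$ otherwise. Since $S$ is $2^dr$-large, there is a $\CC'$-monochromatic progression $P=\{a+kn\colon 0\leq k\leq \ell'-1\}$ with common difference $n\in S$. Reading off the first coordinate of the color shows that $P$ is $\CC$-monochromatic, and restricting to $0\leq k\leq\ell-1$ yields a $\CC$-monochromatic progression of length $\ell$ with step $n$. It remains to show that $n\in E$, for which it suffices to show $n\in B$.

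Fix $i\in\{1,\dots,d\}$. By construction the points $\alpha_ia+k\,\alpha_in\bmod 1$, for $0\leq k\leq \ell'-1$, all lie in the same interval, $[0,1/2)$ or $[1/2,1)$. Now run the argument from the proof of Lemma~\ref{lem:not2large}: translating all these points by a fixed constant reduces to the interval $[0,1/2)$, and, replacing $n$ by $-n$ and $P$ by its reverse if necessary (which changes neither $\norm{\alpha_in}$ nor the set of points above), we may assume that the representative $\beta_i$ of $\alpha_in$ in $(-1/2,1/2]$ satisfies $0\leq\beta_i\leq 1/2$. Then $0\leq\alpha_ia+k\beta_i<1/2$ together with $\beta_i\geq 0$ forbids wrap-around, so consecutive points differ by exactly $\beta_i$ as real numbers, giving $(\ell'-1)\beta_i<1/2$ and hence $\norm{\alpha_in}=\beta_i<\frac{1}{2(\ell'-1)}<\ve$. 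Since this holds for every $i$, we conclude $n\in B\subset E$, so $n\in S\cap E$; as $\CC$ and $\ell$ were arbitrary, $S\cap E$ is $r$-large.

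There is no serious obstacle here: the only point requiring care is that the wrap-around argument of Lemma~\ref{lem:not2large} must be applied separately in each of the $d$ Bohr coordinates, which is legitimate precisely because $\CC'$-monochromaticity of $P$ forces simultaneous agreement of all $d$ interval-labels $b_i$ along $P$; the factor $2^d$ in the hypothesis is exactly the cost of encoding these $d$ binary labels.
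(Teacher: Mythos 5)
Your proof is correct, but it is organized differently from the paper's. The paper argues by induction on the dimension $d$, peeling off one frequency at a time: it writes $S=(S\cap E')\cup(S\setminus E')$ for a one-dimensional \bohrzero set $E'$, invokes Lemma~\ref{lem:not2large} to see that $S\setminus E'$ is not $2$-large, and then applies the Ramsey-type Proposition~\ref{prop:ramsey-large} to conclude that $S\cap E'$ is $2^{d-1}r$-large, iterating $d$ times. You instead give a direct, one-shot argument: you build the product coloring with $2^dr$ cells explicitly and rerun the no-wraparound telescoping computation from the proof of Lemma~\ref{lem:not2large} separately in each of the $d$ Bohr coordinates. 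The underlying combinatorial mechanism is identical (indeed, unwinding the paper's induction, via the join-of-colorings proof of Proposition~\ref{prop:ramsey-large}, produces essentially your coloring $\CC'$), so the two proofs differ in packaging rather than substance. The paper's version is more modular, reusing its two earlier results as black boxes; yours is self-contained, makes the origin of the factor $2^d$ completely transparent, and correctly handles the one point needing care, namely that the sign normalization $\beta_i\geq 0$ may be performed independently for each coordinate $i$ since it is only an analysis device that changes neither $n$ nor $\norm{\alpha_i n}$.
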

\begin{proof}
We proceed by induction on the dimension $d$ of the \bohrzero set $E$.
Assume that $E$ is a \bohrzero-set of dimension $1$. Then $E\supset E':=\{n\colon \norm{n\alpha}<\ve\}$ for some $\alpha\in\T$ and some $\ve>0$. Let $S$ be $2r$-large for some $r\in\N$. Write $S=(S\cap E)\cup(S\setminus E)$. By Lemma~\ref{lem:not2large}, the second of these sets is not $2$-large, and by Proposition~\ref{prop:ramsey-large}, $S\cap E$ is $r$-large.

Assume that $d\geq 1$, that the result holds for \bohrzero-sets of dimension $d$, and let $E$ be a \bohrzero-set of dimension $d+1$ and $S$ be a $2^{d+1}r$-large set. Then $E\supset F\cap E'$,  where 
$F$ is a \bohrzero set of dimension $d$ and 
$E'=\{n\colon \norm{n\alpha}<\ve\}$ for some $\alpha\in\T$ and some $\ve>0$. 
As above, we write $S=(S\cap E')\cup(S\setminus E')$.  Again, by Lemma~\ref{lem:not2large} the second of these sets is not $2$-large and by Proposition~\ref{prop:ramsey-large}, $S\cap E'$ is $2^dr$-large. By the induction hypothesis, 
$S\cap E\supset(S\cap E')\cap F$ is $r$-large.
\end{proof}

\subsection{$r$-large sets and nilsystems}
We are interested if the results  of Section~\ref{sec:mult-RP} have counterparts for $r$-large sets. 
For example, consider the analog of Corollary~\ref{cor:Nxrecir}:

\begin{question}
Let $(X,T)$ be a minimal $d$-step nilsystem, $x_0,x_1\in X$, and $U$ be an open neighborhood of $x_1$ with $x_0\notin\overline{U}$. Then $N(x_0,U)$ is not a set of multiple recurrence.
Does there exist some $r\geq 2$ such that  $N(x_0,U)$ is not $r$-large? 
\end{question}

This can be answered in the particular case of affine nilsystems, as the affine nilsystems  give rise to  polynomials (see Section~\ref{sec:affine}): 

\begin{proposition}
\label{prop:affine1}
Let $\ell\geq 1$, $0< \delta\leq 1/2$, $\alpha\in\T$,
\begin{equation}
\label{eq:defOmega}
R=\{n\in\N\colon \norm{ n^\ell\alpha}>\delta\}
\end{equation}
and $m=\lceil 2^{\ell-1}\delta^{-1}\rceil$. Then $R$ is not $m$-large.
\end{proposition}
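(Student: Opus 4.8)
The plan is to generalize Lemma~\ref{lem:not2large} to progressions governed by a degree-$\ell$ polynomial rather than a linear form. Suppose $\beta \in \T$ and $P = \{a + jn : 0 \le j \le m\}$ is an arithmetic progression with common difference $n$. The point of the proof is that the values $(a+jn)^\ell\alpha$ do not evolve linearly in $j$; instead they evolve like a polynomial of degree $\ell$, and we must exploit finite differences. First I would recall the standard fact that for a polynomial $q$ of degree $\ell$, the $(\ell+1)$-st finite difference $\Delta^{\ell+1}q$ vanishes identically, while the $\ell$-th finite difference $\Delta^\ell q$ is the constant $\ell! \cdot (\text{leading coefficient})$. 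Applying this to $q(j) = (a+jn)^\ell\alpha$, whose leading coefficient is $n^\ell\alpha$, we get that $\Delta^\ell q$ is the constant $\ell!\,n^\ell\alpha$. Hmm — but $\ell!$ is an integer, so modulo $1$ this loses information; I should instead work with the $\ell$-th derivative-type difference using binomial coefficients so that the relevant quantity is exactly $n^\ell\alpha \bmod 1$. Concretely, iterating the first difference $\ell$ times, $\sum_{j=0}^{\ell}(-1)^{\ell-j}\binom{\ell}{j}(a+jn)^\ell\alpha \equiv \ell!\,n^\ell\alpha \pmod 1$; to avoid the factor $\ell!$ one instead tracks, along the progression, a telescoping chain of $\ell$ successive differences, at each stage halving the admissible error, which is the source of the $2^{\ell-1}$ in $m$.

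Next I would set up the coloring. As in Lemma~\ref{lem:not2large}, partition $\T$ into $m$ arcs of equal length $1/m$ and color $n \in \N$ by which arc $n^\ell\alpha$... no: more carefully, I color by which arc the relevant iterated-difference quantity lands in. Actually the cleanest route mirrors the structure of Lemma~\ref{lem:not2large}: suppose toward a contradiction that $P = \{a+jn : 0\le j \le m\}$ is monochromatic with $n \in R$, i.e. $\|n^\ell\alpha\| > \delta$. The hypothesis that $a\alpha,(a+n)\alpha,\dots$ — rather, the values $q(0),\dots,q(m)$ — all lie in one arc of length $1/m$ forces each consecutive first difference to have representative of absolute value $< 1/m$; then, arguing as in the linear case (each difference stays in an interval short enough that the ``$\bmod 1$'' can be removed), one shows inductively that the $k$-th iterated difference has a representative bounded by $2^{k-1}/m \le 2^{\ell-1}/m$. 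For $k = \ell$ this representative is, up to sign, $n^\ell\alpha \bmod 1$ — or a small integer multiple thereof that the finite-difference bookkeeping controls — so $\|n^\ell\alpha\| \le 2^{\ell-1}/m$. Since $m = \lceil 2^{\ell-1}\delta^{-1}\rceil \ge 2^{\ell-1}/\delta$, this gives $\|n^\ell\alpha\| \le \delta$, contradicting $n \in R$. Hence no such monochromatic progression of length $m+1$ exists, so $R$ is not $m$-large. (If $\alpha$ is rational the statement is immediate, so one may assume $\alpha$ irrational, though the argument above does not actually need this.)

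The main obstacle I anticipate is the bookkeeping in the induction on the iterated differences — specifically, showing that at each stage the ``wraparound mod $1$'' can genuinely be discarded. In the linear case (Lemma~\ref{lem:not2large}) this was handled by the observation that $0 \le a_k < 1/2$ and $0 \le \beta \le 1/2$ force $a_k + \beta < 1$; here I need an analogous statement one level up, where the ambient interval at stage $k$ has length $2^{k-1}/m$ and I must verify that adding the stage-$k$ difference to an endpoint stays within a fundamental domain, so that the stage-$(k+1)$ difference is the genuine real difference and not merely its residue. This requires choosing the representatives consistently (say in $(-1/2,1/2]$ or aligned to the arc containing $q(0)$) and tracking that $2^{\ell-1}/m \le 1/2$, which holds since $m \ge 2^{\ell-1}\delta^{-1} \ge 2^{\ell-1} \cdot 2 = 2^\ell \ge 2^{\ell-1}$... one should double-check the constant here, but it is the only quantitative subtlety. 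Everything else is the finite-difference identity and a routine telescoping estimate.
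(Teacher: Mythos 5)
Your skeleton is the right one --- iterated finite differences along a monochromatic progression of length $\ell+1$, with the admissible error doubling at each differencing stage, ending in a contradiction with $n\in R$ --- and your wraparound bookkeeping (representatives in $(-1/2,1/2]$, checking $2^{\ell-1}/m\le 1/2$) matches the paper's. But there is a genuine gap exactly at the point you flagged and then talked yourself past: the factor $\ell!$. For $q(j)=(a+jn)^\ell\alpha$, the $\ell$-th iterated difference is $\ell!\,n^\ell\alpha$, and the ``telescoping chain of $\ell$ successive differences'' you propose as a way ``to avoid the factor $\ell!$'' \emph{is} that iterated difference --- it does not avoid anything. So if you color $p$ by the arc containing $p^\ell\alpha$, the argument you sketch proves $\norm{\ell!\,n^\ell\alpha}<2^{\ell-1}/m$, and a bound on $\norm{\ell!\,x}$ gives no bound on $\norm{x}$ (take $x$ close to $1/\ell!$). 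Your hedge ``or a small integer multiple thereof that the finite-difference bookkeeping controls'' is precisely the step that fails: the bookkeeping controls the distance of $\ell!\,n^\ell\alpha$ to the nearest integer, not the distance of $n^\ell\alpha$ to the nearest integer.

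The paper's fix is to build the coloring from a rescaled frequency: fix the real representative $\underline\alpha\in(-1/2,1/2]$ of $\alpha$, set $\underline\beta=\underline\alpha/\ell!$ and $\beta=\underline\beta\bmod 1$, and color $p$ by which of the $m$ arcs contains $p^\ell\beta$. Then with $u_p=(a+pn)^\ell\underline\beta$ the $\ell$-th difference is $(\Delta^\ell u)(0)=\ell!\,n^\ell\underline\beta=n^\ell\underline\alpha$ \emph{exactly}, as a real number, and the doubling estimate yields $\norm{n^\ell\alpha}<2^{\ell-1}/m\le\delta$ directly. Without this rescaling of the coloring your conclusion does not follow. (Two minor points: you only need progressions of length $\ell+1$, not $m+1$, though using the longer length is harmless; and the claim that the argument works for rational $\alpha$ is fine but irrelevant once the coloring is set up correctly.)
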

\begin{proof} 
We proceed by contradiction and assume that $R$ is $m$-large. 

To avoid ambiguity, we stress that we consider here $\alpha$ as an element of $\T=\R/\Z$. We write $\underline\alpha$ for the real in $(-1/2,1/2]$ such that $\alpha=\underline\alpha\bmod 1$, $\underline\beta=\underline\alpha/\ell!\in\R$ and $\beta=\underline\beta\bmod 1\in\T$.

Let $\T=I_1\cup\ldots\cup I_m$ be a partition of $\T$ in (half open) intervals of length  $1/m$.  For $1\leq j\leq m$, 
let $C_j=\{p\in\N\colon p^\ell\beta\in I_j\}$.  By hypothesis, there exists an arithmetic progression $P=\{ a+pn\colon 0\leq p\leq\ell\}$  of length $\ell+1$, with step $n\in R$, and it is monochromatic under this coloring, meaning that there exists $j$, $1\leq j\leq m$, such that   $(a+pn)^\ell\beta\in I_j$ for $0\leq p\leq\ell$.

If $(u(p))$ is a  sequence of reals, write $(\Delta u)(p)=u(p+1)-u(p)$. Iterating this definition, we have that
$$
(\Delta^\ell u)(p)=\sum_{k=0}^\ell\binom \ell k(-1)^ku_{p+k}.
$$
Using this with $u_p=(a+pn)^\ell\underline \beta$, 
$$
n^\ell\underline\alpha=\ell!n^\ell\underline\beta=(\Delta^\ell u)(0)=\sum_{k=0}^\ell\binom \ell k(-1)^ku_{k}.
$$
For every real number $x$, we write $\{x\}$ for the difference between $x$ and the nearest integer. For  $0\leq p\leq \ell$, the points $u_p\bmod 1$ belong to the same half open interval $I_j$ of length $1/m$, and thus for $0\leq p\leq\ell-1$ we have 
$\{(\Delta u)(p)\}=\{ u_{p+1}-u_p\}\in(-1/m,1/m)$. By the same argument and using induction, $\{(\Delta^\ell u)(0)\} \in (-2^{\ell-1}/m,2^{\ell-1}/m)$, meaning that $\{n^{\ell}\underline\alpha\}\in (-2^{\ell-1}/m,2^{\ell-1}/m)$.  Thus $\norm{n^\ell\alpha}< 2^{\ell-1}/m<\delta$, a contradiction.
\end{proof}

\subsection{Syndetic large sets}

\begin{definition}
Recall that a set $E\subset \N$ is syndetic if there exists $r\geq 1$ such that every interval of length $r$ contains at least one element of $E$. The smallest integer $r$ with this property is called the \emph{syndeticity constant} of $E$.
\end{definition}
\begin{definition}
Let $r\geq 2$.
A set $S$ of integers is \emph{$r$-syndetic large} if every syndetic set with syndeticity constant less than or equal to $r$ contains arbitrarily long arithmetic progressions with step in $S$.
\end{definition}

The following proposition is a finite version of the equivalence between characterizations~\eqref{it:largel} and~\eqref{it:intersectivel} of multiple recurrence in Theorem~\ref{th:mult-top-recur}:
\begin{proposition}
\begin{enumerate}
\item
\label{it:obvious}
Every $r$-large set is $r$-syndetic large.
\item 
\label{it:slarge-large}
Every $(2r-1)$-syndetic large set is $r$-large.
\end{enumerate}
\end{proposition}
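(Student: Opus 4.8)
The plan is to prove the two parts separately; part~\eqref{it:obvious} is essentially immediate, while part~\eqref{it:slarge-large} requires the standard trick of covering a syndetic set by a bounded number of translates.

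For part~\eqref{it:obvious}: suppose $S$ is $r$-large and let $E$ be a syndetic set with syndeticity constant at most $r$. First I would observe that since every interval of length $r$ meets $E$, the complement $\N\setminus E$ meets every interval of length~$r$ in at most $r-1$ points, so one can write $\N\setminus E$ as a union of $r-1$ sets each of which, together with the single ``color'' $E$... actually the cleaner route: color $\N$ with $r$ colors by assigning color $0$ to all of $E$, and distributing $\N\setminus E$ among the remaining $r-1$ colors so that... hmm, that uses $r$ colors only if $\N\setminus E$ can be $(r-1)$-colored with no long monochromatic progression, which is not automatic. The genuinely correct argument: since $E$ has syndeticity constant $\le r$, the set $\N\setminus E$ contains no run of $r$ consecutive integers, but that alone does not bound progressions. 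Instead, color $\N$ by the function $n\mapsto (\text{the distance from }n\text{ to the next element of }E)$, which takes values in $\{0,1,\dots,r-1\}$, hence is an $r$-coloring; by $r$-largeness there is an arbitrarily long progression with step in $S$ on which this function is constant, say equal to $c$. Then the shifted progression (add $c$ to every term) lies entirely in $E$ and has the same step in $S$. This gives part~\eqref{it:obvious}.

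For part~\eqref{it:slarge-large}: suppose $S$ is $(2r-1)$-syndetic large and let $\N = C_1\cup\dots\cup C_r$ be an $r$-coloring; I must produce a long monochromatic progression with step in $S$. The key step is: for each $i$, the set $C_i - j$ for $j = 0,1,\dots, ?$ — more precisely, pass to the cell $C_j$ of largest upper Banach density, which is a set whose complement has... no. The standard move (mirroring the proof of \eqref{it:largel}$\Rightarrow$\eqref{it:intersectivel} in Theorem~\ref{th:mult-top-recur}) is: at least one cell, say $C_1$, satisfies $(C_1 - 1)\cup(C_1-2)\cup\dots\cup(C_1 - m) \supset \N$ for some $m$, but here we need a \emph{uniform} bound $m \le 2r-1$. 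This is the combinatorial heart: if every cell $C_i$ had the property that $\bigcup_{j=1}^{2r-1}(C_i - j)$ misses some integer $n_i$, then $n_i + 1, \dots, n_i + (2r-1)$ all avoid $C_i$; arranging these witnesses appropriately and using a pigeonhole/counting argument on a window of length roughly $r(2r-1)$ should yield a contradiction — within any sufficiently long interval each color is forbidden from a block of $2r-1$ consecutive values, and $r$ such blocks cannot tile or cover an interval of length $< r(2r-1)$ without overlap, but... Let me instead use the cleaner bound: pick $C_j$ of maximal density; then $E := \bigcup_{i=0}^{?}(C_j + i)$... Actually the right statement is that \emph{some} translate-union $\bigcup_{i=1}^{2r-1}(C_j - i)$ equals $\N$: if not, for each $j$ there is $n_j$ with $[n_j+1, n_j+2r-1]\cap C_j = \emptyset$; the interval of length $2r-1$ after $n_j$ avoids color $j$, hence is colored by the other $r-1$ colors; but an interval of length $2r-1$ contains... this is where I expect the real work. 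Once we have $j$ with $\bigcup_{i=1}^{2r-1}(C_j-i) = \N$, equivalently $C_j$ is syndetic with constant $\le 2r-1$, we apply $(2r-1)$-syndetic largeness to $C_j$ to get an arbitrarily long progression with step in $S$ inside $C_j$, which is monochromatic, finishing the proof.

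The main obstacle is the uniform syndeticity bound: showing that in any $r$-coloring of $\N$, \emph{some} color class is syndetic with constant at most $2r-1$. I would prove this by contradiction as sketched: if every color class $C_j$ fails to be $(2r-1)$-syndetic, choose for each $j$ a ``gap'' interval $I_j = [n_j+1, n_j + 2r-1]$ disjoint from $C_j$; then examine the coloring restricted to a long stretch and derive that some sufficiently long interval must be entirely avoided by all colors simultaneously — impossible. A slicker version: the color class of maximal upper density $d \ge 1/r$ must be syndetic, and one checks its syndeticity constant is at most $2r-1$ by noting that a gap of length $2r-1$ in it would force the remaining $r-1$ colors to cover an interval of length $2r-1$, each contributing at most... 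I will supply the precise constant-chasing in the write-up, but the structure above is the plan.
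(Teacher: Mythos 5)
Your argument for part~\eqref{it:obvious} is correct and is in substance the paper's argument: coloring $n$ by the distance to the next element of $E$ is exactly the partition of $\N$ subordinate to the cover $E\cup(E-1)\cup\dots\cup(E-(r-1))$, and shifting the resulting monochromatic progression by the constant value $c$ lands it in $E$ with the same step.

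Part~\eqref{it:slarge-large}, however, rests on a claim that is false: it is \emph{not} true that in every $r$-coloring of $\N$ some color class is syndetic with constant at most $2r-1$; indeed, no color class need be syndetic at all. Take $r=2$ and color $\N$ in alternating monochromatic blocks of lengths $1,2,4,8,\dots$: each color class has unbounded gaps, so neither is syndetic, and the class of larger upper density (here both have upper density about $2/3$ and $1/3$ respectively, but one can arrange upper density $\geq 1/2$ for both by varying block lengths) is still not syndetic. So the ``combinatorial heart'' you defer to the write-up cannot be supplied, and your fallback via maximal upper density fails for the same reason: positive upper (even Banach) density does not imply syndeticity. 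The paper avoids this entirely by an encoding trick: given the coloring $\N=C_1\cup\dots\cup C_r$, define $E$ by putting $rn+i\in E$ if and only if $n\in C_i$. Then $E$ has exactly one point in each block $(rn,(r+1)n]$... more precisely in each $(rn,r(n+1)]$, hence is syndetic with constant at most $2r-1$; applying $(2r-1)$-syndetic largeness gives a progression of length $\ell r$ and step $n\in S$ inside $E$, and the sub-progression of step $rn$ consists of points $r(b+jn)+i$ with a \emph{fixed} residue $i$, which decodes to a monochromatic progression $\{b+jn\colon 0\leq j<\ell\}$ in $C_i$ with step $n\in S$. The essential idea you are missing is that one should transfer the whole coloring into a single auxiliary syndetic set and then decode, rather than look for a syndetic color class.
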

\begin{proof} 
By using a cover of $\N$ obtained by translates of an $r$-large set $S$ and the associated partition of $\N$, the first statement 
follows.  

For the second statement, assume that $S$ is $(2r-1)$-syndetic large set.
Let $\ell\geq 2$ be an integer and let   $\N=C_1\cup\dots\cup C_r$ be a $r$-coloring of $\N$.  We want to build a monochromatic progression of length $\ell$ and step in $S$. 

Define $E\subset \N$ such that for $n>0$  and $1\leq i\leq r,\ rn+i\in E$ 
if and only if $n\in C_i$.
Then each subinterval of $\N$ of the form $(nr,(n+1)r]$ contains a unique point of $E$, and the congruence class modulo $r$ of this integer is given by the color of $n$. In particular, the difference between two consecutive points of $E$ is $\leq 2r-1$, and $E$ is  syndetic with syndeticity constant $\leq 2r-1$.

Since $S$ is $(2r-1)$-syndetic large,
 $E$ contains an arithmetic progression
$\{a, a+n,\dots,a+(\ell r-1)n\}$ of length $\ell r$ and step $n\in S$. Thus $E$ also  contains the sub-progression
$\{a, a+rn,\dots,a+(\ell-1)rn\}$
of length $\ell$ and step $rn\in rA$. Write $a=rb+i$ where $b\geq 0$ and $1\leq i\leq r$ and rewrite this sub-progression as 
$$
\bigl\{(b+jn)r+i\colon 0\leq j<\ell\bigr\}.
$$
By definition of $E$, all the integers $b+jn$, $0\leq j<\ell$, belong to $C_i$. They form a monochromatic progression of length $\ell$ for the initial coloring  with step in $S$.
\end{proof}

\end{document}